\newtheorem {theorem}{Theorem} [section]
\newtheorem {lemma} [theorem] {Lemma}
\newtheorem {proposition} [theorem] {Proposition}
\newtheorem {corollary} [theorem] {Corollary}
\newtheorem {question}[theorem] {Question}
\newtheorem {observation} [theorem] {Observation}
\theoremstyle{definition}
\newtheorem{definition}[theorem]{Definition}
\newtheorem{remark}[theorem]{Remark}
\newtheorem{example}[theorem]{Example}
\def\co{\colon\thinspace}
\def\into{\, \hookrightarrow\, }
\def\onto{\, \twoheadrightarrow\, }
\def\R {\mathbb R}
\def\mc {\mathcal}
\def\F {\mathbb F}
\def\Z {\mathbb{Z}}
\def\res {\mathscr{R}}
\def\catC {\mathscr{C}}
\def\catCSA {\catC_{\mathrm{csa}}}
\def\Obj {\mathrm{Obj}}
\def\Mor {\mathrm{Mor}}
\def\Hom {\mathrm{Hom}}
\def\Aut {\mathrm{Aut}}
\def\Out {\mathrm{Out}}
\def\SK {\underrightarrow{\text{Ker}}\, }
\def\Mod {\mathrm{Mod}}
\def\nc{\mathrm{nc}}
\def\ab{\mathrm{ab}}
\def\x{\underline{x}}
\def\y{\underline{y}}
\def\model {\mathscr{M}}
\def\rmodel {\model} 
\def\wh {\widehat}
\begin{document}

\title[The structure of limit groups]
{The structure of limit groups over hyperbolic groups}

\author[Daniel Groves]{Daniel Groves}
\address{Department of Mathematics, Statistics, and Computer Science\\University of Illinois at Chicago\\322 Science and Engineering Offices (M/C 249)\\851 S. Morgan St.\\
Chicago, IL 60607-7045\\USA }
\thanks{The work of the first author was supported by the National Science Foundation and by a grant from the Simons Foundation (\#342049 to Daniel Groves)}
\email{groves@math.uic.edu}

\author[Henry Wilton]{Henry Wilton}
\address{DPMMS\\Centre for Mathematical Sciences\\Wilberforce Road\\Cambridge\\CB3 0WB\\UK}
\thanks{The second author was supported by the EPSRC}
\email{h.wilton@maths.cam.ac.uk}

\date{\today}

\maketitle

\begin{abstract}
Let $\Gamma$ be a torsion-free hyperbolic group.  We study $\Gamma$--limit groups which, unlike the fundamental case in which $\Gamma$ is free, may not be finitely presentable or geometrically tractable.  We define \emph{model} $\Gamma$--limit groups, which always have good geometric properties (in particular, they are always relatively hyperbolic).  Given a strict resolution of an arbitrary $\Gamma$--limit group $L$, we canonically construct a strict resolution of a model $\Gamma$--limit group, which encodes all homomorphisms $L\to \Gamma$ that factor through the given resolution.  We propose this as the correct framework in which to study $\Gamma$--limit groups algorithmically.  We enumerate all $\Gamma$--limit groups in this framework.
\end{abstract}

Limit groups over a group $\Gamma$ (otherwise known as $\Gamma$--limit groups) arise naturally when studying algebraic geometry over $\Gamma$; that is to say, sets of homomorphisms $\Hom(G,\Gamma)$, where $G$ is a finitely generated group.   We will be concerned with the case in which $\Gamma$ is a non-elementary, torsion-free hyperbolic group, a case which has been studied extensively by Sela \cite{sela:diohyp} and others \cite{kharlampovich_decidability_2013,Perin,perinetal14,GrovesWilton10}. The results of \cite{sela:diohyp} extend Sela's solution to Tarski's problems in the case when $\Gamma$ is free in \cite{sela:dio1} \emph{et seq.}\ (see also \cite{kharlampovich_irreducible_1998} \emph{et seq.}).

Since every finitely generated subgroup of $\Gamma$ is a $\Gamma$--limit group, the hyperbolic case immediately presents new problems that do not arise in the free case.  One such problem is that hyperbolic groups typically contain subgroups that are finitely generated but not finitely presented \cite{rips_subgroups_1982}.  Thus, it is not immediately clear how to give a finite description of a $\Gamma$--limit group.    Moreover, $\Gamma$--limit groups do not in general have the nice geometric properties of limit groups over free groups (which are all toral relatively hyperbolic \cite{Alibegovic05,Dahmani03}, in particular finitely presented).  The geometry of hyperbolic and relatively hyperbolic groups has been integral to the solutions of many algorithmic problems \cite{RipsSela:CanonicalReps,sela:isomorphism,dahmani09,dahmanigroves1,dahmanigroves2}) and has been very useful in the algorithmic study of limit groups (see, for example, \cite{GrovesWilton09,GrovesWilton10}). 

Our goal in this paper is to associate to a $\Gamma$--limit group $L$ a \emph{model} $\Gamma$--limit group $M$, which is toral relatively hyperbolic, contains $L$ as a subgroup, and most importantly, encodes a large set of homomorphisms $L\to \Gamma$ in a natural way.  (The reader should bear in mind that the interest of $\Gamma$--limit groups arises precisely because they encode large sets of homomorphisms to $\Gamma$; indeed, Sela proved that $\Gamma$--limit groups are precisely the finitely generated fully residually $\Gamma$ groups.)  To give the reader a flavour of our results, we state some properties of a model $M$ associated to a $\Gamma$--limit group $L$.

\begin{theorem}\label{thm: Precis}
Let $L$ be a freely indecomposable $\Gamma$--limit group.  There exists a $\Gamma$--limit group $M$ such that:
\begin{enumerate}
\item\label{Precis:LtoM injective} there is an injection $\eta\co L\into M$;
\item\label{Precis:model TRH} $M$ is toral relatively hyperbolic, in particular finitely presented;
\item\label{Precis:Modular} there is a natural homomorphism of modular groups \[\Phi:\Mod(L)\to\Mod(M);\]
\item\label{Precis:Intertwine} $\Phi$ intertwines $\eta$ (that is, 
\[
\eta(\alpha(g))=\Phi(\alpha)(\eta(g))
\]
for $\alpha\in\Mod(L)$ and $g\in L$).
\end{enumerate}
\end{theorem}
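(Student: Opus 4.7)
The plan is to construct $M$ inductively along a strict resolution $L = L_0 \to L_1 \to \cdots \to L_n = \Gamma$ of $L$, which exists by the general theory of $\Gamma$--limit groups. Set $M_n := \Gamma$ as the base case (which is TRH trivially, relative to the empty family). Supposing inductively that a TRH model $M_{i+1}$ of $L_{i+1}$ has been constructed together with an injection $L_{i+1} \into M_{i+1}$, build $M_i$ from the (cyclic) JSJ decomposition of $L_i$ by keeping each QH surface vertex group verbatim, replacing each abelian vertex group by a maximal abelian containing it, and replacing each rigid vertex group $R_v$ by a copy of $M_{i+1}$, identifying $R_v$ with its image under the composition $R_v \to L_{i+1} \into M_{i+1}$ supplied by strictness. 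Amalgamate along the original JSJ edge groups and set $M := M_0$, with $\eta \co L \into M$ the induced vertex-wise map.

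Conditions (1) and (2) are then verified inductively. For (1), Bass--Serre theory on the JSJ tree of $L_i$ reduces injectivity of $L_i \to M_i$ to injectivity on each vertex group: this holds by construction for surface and abelian vertex groups, and by the inductive hypothesis together with the faithfulness-on-rigid-pieces condition in the definition of a strict resolution for rigid vertex groups. For (2), at each inductive stage $M_i$ is a graph of groups whose vertex groups are TRH (inductively for the rigid-replaced parts, hyperbolic-relative-to-boundary for QH parts, free abelian for peripheral parts), with edge groups that are cyclic or abelian and compatible with the chosen peripheral structures; the combination theorem for relatively hyperbolic groups (Dahmani, Osin) then yields that $M_i$ is again TRH.

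For (3) and (4), a standard generating set for $\Mod(L)$ consists of Dehn twists along JSJ edges, automorphisms of QH vertex groups induced by boundary-fixing surface homeomorphisms, and twists in abelian vertex groups. Each such generator is supported on a piece of the JSJ that is either literally preserved in $M$ (edges, surface vertices) or naturally enlarged (abelian vertices), so each extends canonically to an automorphism of $M$ that acts as the identity off the preserved piece. This assignment, checked on generators, defines a homomorphism $\Phi \co \Mod(L) \to \Mod(M)$, and the intertwining identity $\eta(\alpha(g)) = \Phi(\alpha)(\eta(g))$ reduces to a tautology on each JSJ vertex group and hence, by Bass--Serre theory, on all of $L$.

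The main obstacle will be verifying that the $M$ so constructed is itself a $\Gamma$--limit group, rather than merely a TRH overgroup of $L$; equivalently, one must exhibit a sequence of homomorphisms $M \to \Gamma$ that is eventually faithful on every finite subset. Such a sequence is to be produced by composing long products of modular automorphisms level-by-level down the tower with the terminal map $M_n = \Gamma \to \Gamma$, in the spirit of Sela's test/shortening sequences: the QH surface groups and the enlarged abelian peripheries supply enough modular flexibility for generic choices to separate arbitrarily large balls. A secondary concern is canonicity, since the construction depends on both the chosen strict resolution and the JSJ at each level; obtaining a genuinely natural $\Phi$ requires fixing a preferred (e.g.\ essential) JSJ and carefully propagating the choices through the induction.
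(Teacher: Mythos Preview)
Your outline follows the same inductive architecture as the paper, but the choice of replacement for rigid vertex groups is wrong, and this is precisely the failure mode the paper's Section~\ref{s:KM} is devoted to warning about.

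You propose to replace each rigid vertex $R_v$ of the JSJ of $L_i$ by a full copy of $M_{i+1}$, amalgamated along the original edge groups. The problem is that an edge group $C$ adjacent to $R_v$ is maximal abelian in $R_v$ (this is a feature of the primary JSJ), but there is no reason for its image to be maximal abelian, or root-closed, in $M_{i+1}$. If some $c\in C$ has a proper root in $M_{i+1}$, then after amalgamating, that root commutes with $c$ but is not conjugate into the abelian vertex on the other side of the edge; the resulting group is not CSA, hence not a $\Gamma$--limit group and not toral relatively hyperbolic. Example~\ref{ex:KM} exhibits exactly this phenomenon (with $M_{i+1}=\Gamma$). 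The same issue obstructs your appeal to the combination theorem: without malnormality of the edge family in the enlarged vertex, the graph of groups is not $2$--acylindrical, and Dahmani's theorem does not apply.

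The paper's fix is not cosmetic. Instead of all of $M_{i+1}$, one takes for $\widehat{\Delta}_u$ the \emph{quasiconvex enclosure} of the image of $R_v$ in $M_{i+1}$ relative to the images of the incident edge groups: the maximal subgroup of $M_{i+1}$ containing that image which admits no primary splitting relative to those edge images (Definition~\ref{d:Enclose}, Corollary~\ref{c:Enclose}). One then replaces each edge group by its centralizer in $\widehat{\Delta}_u$, adds the corresponding extra roots to the socket vertices, folds together edges whose centralizers become conjugate, and redefines the abelian vertices as abelianizations of the resulting local graphs of abelian groups. Only after all of these adjustments does one recover a $2$--acylindrical primary decomposition whose vertex groups are genuinely rigid (Proposition~\ref{p:rigid}), so that $\widehat{\Delta}$ is again the canonical primary JSJ of $M_i$. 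That last point is also what makes your argument for items (3) and (4) go through: you need the JSJ of $M$ to be $\widehat{\Delta}$ in order to match generators of $\Mod(L)$ with generators of $\Mod(M)$; if the ``rigid'' vertices you inserted themselves split relative to the edges, the modular groups do not line up.

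Finally, for the claim that $M$ is a $\Gamma$--limit group, the paper does not build test sequences directly. It shows instead that the induced map $\mu_i\colon M_i\to M_{i+1}$ is strict (Proposition~\ref{p: Mu is strict}), so that the sequence $M_0\to\cdots\to M_n\to\Gamma$ is a strict resolution, and then invokes Theorem~\ref{t:strict res implies limit group}. Your test-sequence sketch could perhaps be made to work, but strictness of $\mu_i$ already requires the careful construction above; with full copies of $M_{i+1}$ at the rigid vertices, $\mu_i$ need not even be well defined as a single homomorphism.
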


Recall that the  \emph{modular automorphisms} of a  $\Gamma$--limit group are those that arise naturally from the JSJ decomposition of $\Gamma$.  They play a special role in the theory of $\Gamma$--limit groups, and so items (3) and (4) above demonstrate that $M$ captures much of the information contained in $L$.
Items \eqref{Precis:LtoM injective} and \eqref{Precis:model TRH} are proved in Theorem \ref{t:Model props} below.  Items \eqref{Precis:Modular} and \eqref{Precis:Intertwine} are proved in Theorem \ref{t:Properties of models}.

We now elaborate on the details of our construction. According to \cite[Section 2]{sela:diohyp}, all of the homomorphisms from $L$ to $\Gamma$ can be described using a collection of \emph{strict} resolutions (see Definition \ref{d: Strict resolution} below).  These take the form of a sequence of \emph{strict} maps
\[
L_0\to L_1\to\cdots\to L_n
\]
where $L_0$ is (a quotient of) $L$, the $L_i$ are all $\Gamma$--limit groups and $L_n$  admits a (fixed) strict map to $\Gamma$.  In particular, Sela shows that every $\Gamma$--limit group admits a strict resolution, and we use these to build model $\Gamma$--limit groups canonically.

It is important to note that we need to relax two of Sela's conditions.  Sela insists that the maps in resolutions are surjective, but we allow our resolutions to include non-surjective maps.  Moreover, Sela terminates with a $\Gamma$--limit group which is a free product of a free group and freely indecomposable subgroups of $\Gamma$.  We do not know how to ensure this and keep the features of our construction.  These changes seems to be necessary both in order to make model $\Gamma$--limit groups geometrically tractable and also to have their structure closely mimic that of the $L_i$. 

\begin{theorem} \label{thm: Precis 2}
To every strict resolution $L_0\to L_1\to\ldots\to L_n$ there is canonically associated a strict resolution that fits into a commutative diagram\\
\centerline{
\xymatrix{
   L_0  \ar@{>}[d]^{\eta_0} \ar@{>}[r] & L_1 \ar@{>}[d]^{\eta_1} \ar@{>}[r] & \cdots \ar@{>}[r] & L_n \ar@{>}[d]^{\eta_n}  \\
 M_0 \ar@{>}[r] & M_1\ar@{>}[r] &  \cdots\ar@{>}[r] & M_n}}\\\\
where each $(M_i, \eta_i)$ satisfies the properties of Theorem \ref{thm: Precis}.
\end{theorem}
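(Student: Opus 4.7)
The plan is to construct the diagram by downward induction on the level $i$, invoking Theorem \ref{thm: Precis} at each stage to produce the vertical arrow and simultaneously producing the horizontal arrow from the JSJ-theoretic construction of the model. Proceeding from right to left lets us use the already-built $(M_{i+1},\eta_{i+1})$ as the toral relatively hyperbolic target into which the rigid pieces of the JSJ of $L_i$ must be embedded when Theorem \ref{thm: Precis} is applied to $L_i$.

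For the base case $i=n$, the limit group $L_n$ carries a fixed strict map to $\Gamma$, so applying Theorem \ref{thm: Precis} to each freely indecomposable factor of $L_n$ (together with a free factor for any free summand of $L_n$) yields $M_n$ and an injection $\eta_n\co L_n\into M_n$. For the inductive step, assume $(M_j,\eta_j)$ and the horizontal arrows $M_j\to M_{j+1}$ have been built for all $j>i$. The composition $L_i\to L_{i+1}\stackrel{\eta_{i+1}}{\into}M_{i+1}$ is a homomorphism from $L_i$ to a toral relatively hyperbolic group. Feeding this data into the construction of Theorem \ref{thm: Precis} applied to (each freely indecomposable factor of) $L_i$, with $M_{i+1}$ playing the role of the ambient container into which the rigid and edge pieces of the JSJ of $L_i$ are sent, yields a model $M_i$, an injection $\eta_i\co L_i\into M_i$, and a natural homomorphism $M_i\to M_{i+1}$ extending $\eta_{i+1}$ along $L_i\to L_{i+1}$. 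Commutativity of the $i$-th square is then built into the construction.

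The main technical burden is to verify that the horizontal arrow $M_i\to M_{i+1}$ is itself strict, so that the bottom row is a strict resolution in the sense of Definition \ref{d: Strict resolution}. Strictness has two ingredients: injectivity on envelopes of rigid and edge vertex groups of the JSJ of $M_i$, and a non-collapsing condition on QH and abelian vertex groups. The first is inherited from strictness of $L_i\to L_{i+1}$ together with the injectivity of $\eta_{i+1}$ on the image of $L_i$, since by construction the rigid and edge subgroups of $M_i$ are thickenings of the corresponding subgroups of $L_i$. The second reduces, using the intertwining property \eqref{Precis:Intertwine} of Theorem \ref{thm: Precis}, to the analogous non-collapsing for the QH and abelian vertices of $L_i$, which holds by strictness of $L_i\to L_{i+1}$.

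I expect the main obstacle to be the bookkeeping required to make the construction simultaneously canonical at every level and compatible across levels. In particular, one must verify that the thickenings of flexible (QH and abelian) vertices produced at level $i$ can be chosen in a way that is consistent with the already-constructed $M_{i+1}$, and that the resulting homomorphism of modular groups $\Mod(M_i)\to\Mod(M_{i+1})$ intertwines the horizontal maps in the manner demanded by Theorem \ref{thm: Precis}. Once these compatibility checks are dispatched, canonicity of the entire diagram follows from the canonicity asserted in Theorem \ref{thm: Precis} at each stage, combined with the inductive hypothesis supplying a canonical $(M_{i+1},\eta_{i+1})$.
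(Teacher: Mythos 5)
Your overall architecture --- downward induction from $L_n$ to $L_0$, building $M_i$ as an expansion of the primary JSJ of $L_i$ over the already-constructed toral relatively hyperbolic target $M_{i+1}$, and isolating strictness of the horizontal maps $\mu_i\co M_i\to M_{i+1}$ as the main burden --- is exactly the paper's (Sections 3--5, culminating in Theorems \ref{t:Model props} and \ref{t:Properties of models}). However, your justification of that main burden has a genuine gap. You claim that injectivity of $\mu_i$ on envelopes of rigid vertices of $M_i$ is ``inherited from strictness of $L_i\to L_{i+1}$ together with the injectivity of $\eta_{i+1}$ on the image of $L_i$, since by construction the rigid and edge subgroups of $M_i$ are thickenings of the corresponding subgroups of $L_i$.'' But the point of the thickening is precisely that the rigid vertex group $\wh\Delta_u$ of $M_i$ is the quasiconvex enclosure of $\eta_{i+1}(\lambda_i(\Delta_u))$ in $M_{i+1}$, and the peripheral subgroups of the adjacent abelian vertices are colimits containing the full centralizers $Z_{\wh\Delta_u}(\eta_u(\Delta_e))$; the envelope $E(\wh\Delta_u)$ therefore contains many elements that are not in the image of $L_i$, so nothing about $\mu_i|_{E(\wh\Delta_u)}$ can be read off from the behaviour of $\lambda_i$ on $E(\Delta_u)$. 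The paper's proof of this step (Proposition \ref{p: Mu is strict}) is genuinely nontrivial: it runs the strong accessibility hierarchy $\mc{T}_{M_{i+1},\mu(\overline{\mc{C}}_v)}$ on $N=\mu(E)$ to obtain a chain of subgroups and abelian trees, compares the induced star-shaped splitting of $E$ with these trees via equivariant morphisms, and rules out folds using malnormality of the family of incident edge groups (a fold would force a non-abelian edge stabilizer in a tree with abelian edge stabilizers). Your proposal contains no substitute for this argument.

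Two smaller points. First, your reduction of the non-collapsing conditions on socket and abelian vertices to those of $L_i$ ``using the intertwining property'' of the modular groups is not how this goes: the socket case is immediate from the construction, while the abelian case requires injectivity of $\mu_i$ on the peripheral subgroup $\overline{P}(\wh\Delta_{[w]})$, which the paper obtains from the universal property of pushouts of free abelian groups (Lemma \ref{l:restricted pushout}), not from modular automorphisms. Second, invoking Theorem \ref{thm: Precis} as a black box at each level is somewhat circular, since in the paper that theorem is itself a corollary of this construction; in particular the injectivity of $\eta_i$ (Proposition \ref{p:Map is injection}) and the identification of the expansion $\wh\Delta$ as the canonical primary JSJ of $M_i$ are further nontrivial verifications that your outline presupposes rather than supplies.
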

This result follows from Theorems \ref{t:Model props} and \ref{t:Properties of models}, along with our construction of models from a resolution of $L_0$.

Suppose that $\res = L_0 \stackrel{\lambda_0}{\to} L_1 \stackrel{\lambda_1}{\to} \ldots \stackrel{\lambda_{n-1}}{\to} L_n$ is a strict resolution of $L_0$.
The way in which  $\res$ describes a large set of homomorphisms from $L_0$ to $\Gamma$ is via homomorphisms which {\em factor through $\res$}.  This means that a homomorphism can be described as a composition of modular automorphisms of the $L_i$ and the given maps $\lambda_i$, finally composed with a particular embedding of each of the freely indecomposable free factors of $L_n$ into $\Gamma$ and an arbitrary map from the free free factor of $L_n$ to $\Gamma$.  It is important to note that one needs modular automorphisms of $L_n$ in this description.  From a homomorphism factoring through $\res$, the intertwining maps $\Phi_i \co \Mod(L_i) \to \Mod(M_i)$ canonically induce a homomorphism factoring through the model resolution.

We give a brief outline of this paper.  In Section \ref{s:KM} we discuss some differences between the work in this paper and a construction that appears in \cite{kharlampovich_decidability_2013} and also in \cite{KMT}.  In Section \ref{s:Prelim} we recall some background required for the results in this paper.  
In Section \ref{s:Defn models} we give the definition of the (sequence of) model(s) built from a strict resolution of a $\Gamma$--limit group $L$, in terms of the universal property that a model satisfies.  In Section \ref{s:Gluing} we provide the construction of models inductively in terms of a graph of groups built from the Grushko and JSJ decompositions of the limit groups appearing in the resolution of $L$, and prove this construction satisfies the universal property from Section \ref{s:Defn models}.
In Section \ref{s:Model Properties} we prove the remaining properties of model $\Gamma$--limit groups.  In Section \ref{s:Rel Imm} we introduce the notion of {\em relatively immutable} subgroups of toral relatively hyperbolic groups, prove basic results about them and provide an effective enumeration of them.  In Section \ref{s:Calc QCE} we prove that quasi-convex enclosures (see Subsection \ref{ss:SA}) can be algorithmically computed.  Finally, in Section \ref{s:Enumeration} (See Theorem \ref{thm: Enumerate Gamma-limit groups}) we apply the construction of models to algorithmically enumerate $\Gamma$--limit groups via an enumeration of all of the pairs of resolutions as in Theorem \ref{thm: Precis 2}.
This enumeration is analogous to the enumeration of limit groups (over free groups) which we provided in \cite{GrovesWilton09}.

{ \subsection*{Acknowledgements}{We thank the referee for a very careful reading and for many very helpful comments and corrections which improved the manuscript in numerous ways.}}

\section{A cautionary example} \label{s:KM}

A somewhat similar construction to the one we perform in this paper is described in \cite[$\S\S3.5$]{kharlampovich_decidability_2013} and \cite[$\S\S6.6$]{KMT}.  (The construction in these two papers appears to be identical; we refer to the numbering in \cite{kharlampovich_decidability_2013}.)

In order to highlight some differences between the construction in \cite{kharlampovich_decidability_2013, KMT} and the one in this paper, in this section we describe certain examples.  It is worth remarking that these examples are very elementary, and the issues that they exemplify are entirely generic -- they will occur for many torsion-free hyperbolic groups $\Gamma$ and many  $\Gamma$--limit groups.

We start with a basic fact about hyperbolic groups.  For brevity, we call a torsion-free hyperbolic group \emph{rigid} if it admits no non-trivial free or cyclic splitting.

\begin{lemma}\label{lem:Rigid embedding}
Any torsion-free word-hyperbolic group $G$ embeds in a rigid word-hyperbolic group $\Gamma$.  Furthermore, if $G$ is virtually special (in the sense of Haglund--Wise \cite{Haglund08}) then { we can choose $\Gamma$ to be} virtually special.
\end{lemma}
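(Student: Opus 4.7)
The plan is to realise $\Gamma$ as a hyperbolic Dehn filling quotient of the free product $H := G * \mathbb{Z}$. Since $H$ is torsion-free hyperbolic and contains $G$ as a free factor, and since a free product of virtually special hyperbolic groups is again virtually special (via standard combination theorems for special cube complexes), it suffices to construct a quotient of $H$ that is rigid, preserves the embedding of $G$, and, in the virtually special case, preserves virtual specialness.

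Choose a finite collection $w_1, \dots, w_k \in H$ of pairwise non-conjugate loxodromic elements forming a malnormal quasi-convex family, none conjugate into $G$. For sufficiently large $N$, define
\[
\Gamma := H / \langle\langle w_1^N, \dots, w_k^N\rangle\rangle.
\]
By the hyperbolic Dehn filling theorems of Osin and of Groves--Manning, $\Gamma$ is torsion-free hyperbolic and the composition $G \into H \onto \Gamma$ is injective, provided the $w_i$ have sufficiently large translation length so that no fixed short element of $G$ lies in the normal closure of the filling data. To enforce rigidity, choose the $w_i$ so that (i) at least one relator mixes both factors of $H$, which via Stallings' theorem makes $\Gamma$ one-ended (ruling out free splittings); and (ii) any cyclic splitting of $\Gamma$ pulls back, through the Bass--Serre/Dehn-filling correspondence, to a splitting of $H$ over a cyclic subgroup in which every $w_i$ is elliptic. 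By taking the $w_i$ generic enough (in the sense of Champetier or Ollivier's work on random quotients), no such splitting of $H$ exists, so $\Gamma$ is rigid.

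For the virtually special case, the same construction is carried out within Wise's cubical small cancellation framework (cf.\ \emph{The structure of groups with a quasi-convex hierarchy}): if the relators $w_i^N$ satisfy a cubical $C'(1/6)$ condition with respect to a cocompact action of $H$ on a CAT(0) cube complex (inherited from $G$), then $\Gamma$ inherits such an action, and a finite-index subgroup is special. The main obstacle is balancing the competing demands: rigidity wants \emph{many} relators in \emph{generic} position to kill every cyclic splitting, while virtual specialness constrains the cubical geometry of the relators. The Wise--Haglund cubical small cancellation machinery is tailored to exactly this trade-off, so the verification reduces to a careful selection of $w_i$ that are long enough (for Dehn filling and injectivity), rich enough (for rigidity), and cubically compatible (for virtual specialness).
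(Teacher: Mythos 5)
Your approach (a small-cancellation/Dehn-filling quotient of $G*\Z$) is genuinely different from the paper's, but as written it has two serious gaps. First, the quotient $\Gamma = H/\langle\langle w_1^N,\dots,w_k^N\rangle\rangle$ is \emph{not} torsion-free: filling the peripheral (or malnormal quasiconvex) cyclic subgroups $\langle w_i\rangle$ along the proper finite-index subgroups $\langle w_i^N\rangle$ forces the image of $w_i$ to have order dividing $N$, and the Osin/Groves--Manning theorems guarantee that for long fillings $\langle w_i\rangle/\langle w_i^N\rangle\cong\Z/N$ actually \emph{embeds} in the quotient. So the very first required property of $\Gamma$ fails. (Killing the $w_i$ themselves, or using non-power $C'(1/6)$ relators, avoids this, but then the "large $N$" framing and the appeal to Dehn filling must be reworked.) Second, the rigidity argument rests on the claim that a cyclic splitting of $\Gamma$ "pulls back" to a cyclic splitting of $H$ in which the $w_i$ are elliptic. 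There is no such correspondence: if $\Gamma=H/K$ acts on a tree with cyclic edge stabilizers, the induced action of $H$ has edge stabilizers equal to full preimages, which contain the infinite normal subgroup $K$ and are nowhere near cyclic. Controlling splittings of Dehn-filled or random quotients is a real theorem in each case (one-endedness of generic quotients, behaviour of JSJ under filling), not something that follows from a generic-position heuristic; and the virtual specialness step likewise needs the Malnormal Special Quotient Theorem rather than just a cubical $C'(1/6)$ condition, and the MSQT again produces torsion when one fills along proper finite-index subgroups.

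For comparison, the paper's proof avoids all of this by never passing to a quotient. It chooses finitely many $h_1,\dots,h_n\in G$ ($n\ge 2$) such that some $h_i$ is hyperbolic in every nontrivial free or cyclic splitting of $G$, replaces them by powers so that $H=\langle h_1,\dots,h_n\rangle$ is free and quasiconvex, and forms the amalgam $\Gamma=G*_{H\cong H_0}G_0$ with $G_0$ a rigid hyperbolic group containing $H_0$ as a malnormal free quasiconvex subgroup. Hyperbolicity comes from the Bestvina--Feighn combination theorem applied to a $2$--acylindrical splitting, torsion-freeness is automatic, rigidity follows because $G_0$ (hence $H$, hence $G$) is elliptic in any splitting of $\Gamma$ and the nonabelian group $H$ cannot stabilize an edge, and virtual specialness follows from Wise's theorem. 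If you want to salvage your quotient strategy, you would at minimum need to replace the relators $w_i^N$ by relators that are not proper powers, invoke a concrete theorem ruling out elementary splittings of the quotient, and verify injectivity of $G\into\Gamma$ via relative quasiconvexity of the free factor; each of these is a substantial piece of work that the proposal currently only gestures at.
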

\begin{proof}
We may assume that $G$ is non-elementary.  Choose pairwise non-conjugate elements $h_1,\ldots,h_n\in G$  (with $n\geq 2$) such that, in every nontrivial free or virtually cyclic splitting of $G$,  some $h_i$ is {hyperbolic}.  (Finitely many $h_i$ suffice by standard accessibility results.)  By the ping-pong lemma, replacing the elements $h_i$ with proper powers, we may assume that the subgroup $H$ that they generate is free and quasiconvex.   Now fix any non-elementary hyperbolic group $G_0$ without free or cyclic splittings, and let $H_0$ be a free, quasiconvex, malnormal subgroup of $G_0$ of the same rank as $H$.  (Examples of such pairs abound.)  The amalgam
\[
\Gamma:=G*_{H\cong H_0} G_0
\]
is now the fundamental group of a $2$--acylindrical graph of hyperbolic groups with quasiconvex edge groups, and hence is hyperbolic by \cite{BF:combination}.  In any free or cyclic splitting of $\Gamma$, $G_0$ is elliptic by construction, hence $G$ is too. Since $n\geq 2$, no edge is stabilized by $H$, and so the splitting is trivial.

Finally, if $G$ is virtually special, we may also choose $G_0$ to be virtually special, and then $\Gamma$ is virtually special by the main theorem of \cite{Wise}.
\end{proof}

We now describe our examples.

\begin{example} \label{ex:KM}
Suppose that $F$ is a finitely generated nonabelian free group, and that $w$ is an element in $F$ which is not a proper power, and so that $F$ admits no splittings over $\{ 1 \}$ or $\Z$ in which $w$ is elliptic.  Such elements $w$ are entirely generic and easy to find---they are called \emph{rigid} by Cashen--Macura \cite{CashenMacura10}, and the procedure they describe can be used to certify that a given element is rigid.

Let $D$ be the double of $F$ over $w$, and note that $D = F \ast_{\langle w \rangle} F$ is the JSJ of $H$. The group $D$ is an ordinary limit group, since the natural retraction $D\to F$ is strict.  Applying Lemma \ref{lem:Rigid embedding}, we obtain a torsion-free hyperbolic group $\Gamma$ that admits no splittings over $\{1 \}$ or $\Z$ and in which $D$ embeds.

Now let $u$ be a primitive element of $D$ so that $D$ admits no nontrivial splitting over $\{ 1 \}$ or $\Z$ in which $u$ is elliptic, and let 
$D_1$ be the double of $D$ over $\langle u \rangle$.  Let $\rho_1 \co D_1 \to D$ be the map that identifies the two copies of $D$ in $D_1$.  
This is a strict map.

Let $G_1$ be a finite-index subgroup of $D_1$ which does not contain $u$, and let $C=\langle u^k\rangle = G_1 \cap \langle u \rangle$ (so $k > 1$).  Let $G_2 = \rho_1(G_1)$ and note that it is straightforward to check that $G_1 \onto G_2$ is a strict map, so since $G_2$ is a subgroup of $\Gamma$,
\[	G_1 \to G_2 	\]
is a strict resolution of $G_1$.  In the description from \cite{kharlampovich_decidability_2013}, the group $G_2$ is given a fixed (conjugacy class of) embedding into $\Gamma$, which can be chosen to be the inclusion described above.
\end{example}

In this example there are many homomorphisms from $G_2$ to $\Gamma$ that are not injections.  However, by pre-composing the inclusion of $G_2$ into $\Gamma$ with a sequence of Dehn twists in the edge group of the JSJ of $G_2$, we get an infinite sequence of non-conjugate embeddings of $G_2$ into $\Gamma$.  Needless to say, since these maps are all injections, they do not factor through a proper shortening quotient of $G_2$.  Thus, in order to describe all of the homomorphisms from $G_2$ to $\Gamma$, and therefore all of the homomorphisms from $G_1$ to $\Gamma$ that factor through $G_1 \to G_2$, one cannot forget about the JSJ decomposition of $G_2$.

\begin{remark}
It is also straightforward to come up with an example of a pair $G_1 \to G_2$ with a strict map, $G_2$ a subgroup of a torsion-free hyperbolic group with nontrivial JSJ such that $G_2$ has no proper shortening quotients.  One can do this with $3$--manifolds, Property (T) hyperbolic groups, the Rips construction, or other ways.
\end{remark}

We now describe the construction from \cite[$\S\S3.5$]{kharlampovich_decidability_2013} for Example \ref{ex:KM}.  In their notation,  $n =2$ and $G_2 = H_1$.  Note that they {\em explicitly} insist that all of the maps between the $G_i$ are proper quotients, and that the final $H_i$ are given with fixed embeddings of the $H_i$ into $\Gamma$ (up to conjugacy).

The primary JSJ of $G_1$ contains the edge group $\langle u^k \rangle$, whose image in $G_2$ is $u^k$.  {(See \S\S \ref{ss: Primary JSJ} for the definition of the canonical primary JSJ decomposition.)}  The construction from \cite{kharlampovich_decidability_2013} extends the centralizer of $u^k$ in $G_2$.  This extension of centralizers obviously has presentation
\[	\langle G_2 , t \mid [t,u^k] = 1 \rangle \]	

This procedure is performed for each of the edges in the JSJ of $G_1$ (which are $G_1$--conjugacy classes of intersections of $\langle u \rangle$ with $G_1$).  Since there are no abelian or QH subgroups in the JSJ decomposition of $G_1$, and since $n=2$, we finish with
the group $\overline{G}_1$, which is this iterated extension of centralizers.  Clearly $\overline{G}_1$ is a $\Gamma$--limit group and contains $G_1$ as a subgroup.

It is then stated in \cite{kharlampovich_decidability_2013}: 

\ 

``We now extend each subgroup $H_i$ by its quasi-convex closure $\Gamma_i$. Denote the obtained group by $N$.  Therefore $N$ is NTQ and total {\em(sic)} relatively hyperbolic.  Each $\Gamma_i$ is freely indecomposable."

\ 

There is no proof that $N$ is NTQ or toral relatively hyperbolic in \cite{kharlampovich_decidability_2013} or \cite{KMT}, only the above assertion.  We now explain that when applying this construction to Example \ref{ex:KM} neither of these assertions is true.  We do not provide the definition of an `NTQ group' here, but only note that it is a particular kind of $\Gamma$--limit group.  The group $N$ constructed in \cite{kharlampovich_decidability_2013} for Example \ref{ex:KM} is not a $\Gamma$--limit group, let alone an NTQ--group.

For Example \ref{ex:KM}, we have $H_1 = G_2$, and only one {$H_i$}.
It is not clear precisely what ``extend each subgroup $H_i$ by its quasi-convex closure" is supposed to mean, but it seems the only reasonable interpretation in this case is to form the graph of groups
\[	N = \overline{G}_1 \ast_{G_2} \Gamma_1	.	\]
In this example, $\Gamma$ admits no nontrivial splittings over $\{ 1 \}$ or $\Z$ so the description of how to find $\Gamma_i$ in \cite[$\S3.4$]{kharlampovich_decidability_2013} yields $\Gamma_1 = \Gamma$

However, the element $u^k$ has a nontrivial root in $\Gamma$, so if we glue $\Gamma$ onto $\overline{G}_1$ along $G_2$ then the resulting group $N$ is not CSA. Both $\Gamma$--limit groups and toral relatively hyperbolic groups are CSA, so this means that it is not a $\Gamma$--limit group (so certainly not NTQ) and also not toral relatively hyperbolic.  Thus the claim from \cite{kharlampovich_decidability_2013} is false.

Moreover, as described above, the group $N$ cannot be used to describe all of the homomorphisms from $G_2$ to $\Gamma$, nor those from $G_1$ to $\Gamma$ which factor through $G_1 \to G_2$, since not all injective homomorphisms from $G_2$ to $\Gamma$ can be extended to homomorphisms from $\Gamma$ to $\Gamma$.

As we stated above, this is a very simple example, and it exhibits behavior that is entirely generic to the situation for $\Gamma$--limit groups for torsion-free hyperbolic $\Gamma$.

The construction in \cite[$\S\S3.5$]{kharlampovich_decidability_2013} is very important to the construction in that paper, and the above Example shows that Proposition 11 from \cite{kharlampovich_decidability_2013} (which is \cite[Theorem 2]{KMT}) is not proved in either of these papers, because the construction they make does not have the properties that they claim it does.  It appears to be a fundamental construction upon which many of the other algorithms rely (see for example, \cite[$\S\S4.4$]{kharlampovich_decidability_2013}, where they use it to `construct algorithmically a finite number of $\Gamma$--NTQ systems corresponding to branches $b$ of the canonical $\Hom$--diagram\ldots').  In summary, this appears to be a very serious error in \cite{kharlampovich_decidability_2013}.

The construction in this paper takes these issues (and others) into account, and embeds a $\Gamma$--limit group (equipped with a strict resolution) into a toral relatively hyperbolic $\Gamma$--limit group, as described in Theorems \ref{thm: Precis} and \ref{thm: Precis 2}.

\begin{remark}
Since the first version of this paper was circulated, new versions of \cite{kharlampovich_decidability_2013} and \cite{KMT} have been posted to the arXiv, which claim to address the issues outlined here. In the interests of pointing out some of the subtleties inherent in the construction made in this paper, we decided to leave this section largely unchanged.
\end{remark}

\section{Preliminaries} \label{s:Prelim}

Let $Q$ be a group.  A {\em $Q$--limit group} is a limit of finitely generated subgroups of $Q$ in the space of $k$--generated marked groups, for any fixed $k$.  It is convenient to work with the following notions, which help to elucidate the connection between $Q$--limit groups and homomorphisms from a finitely generated group $G$ to $Q$.

\begin{definition}
Suppose that $Q$ and $G$ are groups.  A sequence of homomorphisms $\{ \rho_n \co G \to Q \}$ is {\em convergent}\footnote{Other authors call these `stable' sequences.  See, for example, \cite[Definition 1.6]{BFnotes}.} if for any $g \in G$ either (i) $g \in \ker(\rho_n)$ for all but finitely many $n$; or (ii) $g \not\in \ker(\rho_n)$ for all but finitely many $n$.

The {\em stable kernel} of $\{ \rho_n \}$ is the set $\SK(\rho_n)$ of elements $g \in G$ which are in the kernel of $\rho_n$ for all but finitely many $n$.

A {\em $Q$--limit group} is a group of the form $L = G / \SK(\rho_n)$ where $G$ is a finitely generated group and $\{ \rho_n \co G \to Q \}$ is a convergent sequence of homomorphisms.
\end{definition}

\begin{definition}
A group $G$ is {\em equationally noetherian} if for any finitely generated group $H$ there is a finitely presented group $\wh{H}$ along with an epimorphism $\rho \co \wh{H} \to H$ so that the map
\[	\rho^* \co \Hom(H,G) \to \Hom(\wh{H},G)	\]
induced by precomposition with $\rho$ is a bijection.  

In other words, every homomorphism from $\wh{H}$ to $G$ factors through $\rho$.  Yet another way of expressing this is that once a tuple of elements of $G$ satisfies the finitely many relations for $\wh{H}$ it automatically also satisfies the infinitely many relations for $H$.  Thus, the `Hilbert Basis Theorem' holds for equations over $G$.
\end{definition}

\begin{theorem} [Sela; \cite{sela:diohyp}, Theorem 1.22]
Torsion-free hyperbolic groups are equationally noetherian.
\end{theorem}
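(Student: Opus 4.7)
\emph{Plan.} I would follow Sela's strategy, proving the equivalent descending chain condition: any descending chain of $\Gamma$-closed subsets of $\Gamma^n$ eventually stabilizes. Arguing by contradiction, suppose such a strictly descending chain $V_1 \supsetneq V_2 \supsetneq \cdots$ exists, witnessed by nested systems of equations $S_1 \subsetneq S_2 \subsetneq \cdots$ in a free group $F = F(x_1,\ldots,x_n)$ and by homomorphisms $\rho_i \co F \to \Gamma$ that kill $S_i$ but not $S_{i+1}$. After passing to a convergent subsequence, the stable kernel $K = \SK(\rho_i)$ contains every element of $\bigcup_i S_i$, so $L := F/K$ is a $\Gamma$-limit group and (a tail of) $\rho_i$ factors as $F \twoheadrightarrow L \xrightarrow{\bar\rho_i} \Gamma$.

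Next I would analyze $L$ via the Bestvina--Paulin construction: after rescaling by the maximal translation length on a fixed finite generating set, a subsequence of $(\bar\rho_i)$ converges to a nontrivial isometric action of $L$ on an $\R$-tree $T$. Hyperbolicity of $\Gamma$ enters crucially here to guarantee that arc stabilizers of $T$ are virtually cyclic and quasiconvex. The Rips machine then either produces an embedding $L \into \Gamma$ (a case handled separately, since a finitely generated subgroup of $\Gamma$ receives, up to $\Gamma$-conjugation, only finitely many pairwise inequivalent homomorphisms witnessing a chain of varieties) or yields a non-trivial abelian splitting of $L$. Sela's shortening argument then allows one to pre-compose each $\bar\rho_i$ with a modular automorphism $\alpha_i \in \Mod(L)$ so that the resulting sequence has strictly smaller translation length, and hence factors through a proper $\Gamma$-limit group quotient $L \twoheadrightarrow L'$.

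Iterating this procedure produces an infinite strictly descending chain $L = L^{(0)} \twoheadrightarrow L^{(1)} \twoheadrightarrow L^{(2)} \twoheadrightarrow \cdots$ of $\Gamma$-limit groups, which contradicts a well-founded complexity invariant built from the generalized JSJ complexity and Grushko rank. The main obstacle is precisely verifying that this complexity strictly decreases under the shortening step: one must check that modular automorphisms supported on edge groups of the JSJ of $L$ can shorten every non-rigid component of the limit tree while preserving the image of rigid vertices, and one must control the interaction of the shortening with the Grushko decomposition. This is where hyperbolic geometry is indispensable, through quasiconvexity of cyclic subgroups and the absence of non-cyclic abelian subgroups in a torsion-free hyperbolic $\Gamma$; these features are what make the shortening argument terminate in this setting, and hence what make the whole descending chain condition (equivalently, equational noetherianness) go through.
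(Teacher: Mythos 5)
This theorem is not proved in the paper at all: it is imported verbatim from Sela (\cite{sela:diohyp}, Theorem 1.22), so the only meaningful comparison is with Sela's argument, whose broad strategy (descending chain condition, Bestvina--Paulin limit, Rips machine, shortening) your outline does follow.

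The genuine gap is in your termination step. You propose to rule out an infinite chain $L^{(0)}\onto L^{(1)}\onto\cdots$ of proper $\Gamma$-limit quotients by appeal to ``a well-founded complexity invariant built from the generalized JSJ complexity and Grushko rank.'' No such naive invariant works: a proper quotient of a $\Gamma$-limit group can have the same Grushko rank and the same (possibly trivial) JSJ complexity --- for instance quotients of a one-ended rigid $\Gamma$-limit group --- so nothing forces the complexity to drop at each step. Over free groups the analogous termination is deduced from equational noetherianity itself (proved independently via linearity) or from finite presentability of limit groups; over hyperbolic $\Gamma$ neither route is available, since $\Gamma$ need not be linear and, as this paper emphasizes repeatedly, $\Gamma$-limit groups need not be finitely presented. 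Sela's actual proof handles termination by a separate, delicate argument (the termination of decreasing sequences of maximal shortening quotients, Theorem 1.12 of \cite{sela:diohyp}, proved by a diagonal/compactness argument against the maximality of shortening quotients), and this is precisely the hard part of the theorem; your sketch assumes it away. A secondary, smaller imprecision: the Rips machine does not ``produce an embedding $L\into\Gamma$''; the relevant dichotomy is between the shortened homomorphisms being finite up to conjugacy (the rigid case) and $L$ admitting a nontrivial abelian splitting, and the properties one must actually verify for the limit tree are stability/superstability and triviality of tripod stabilizers, not just quasiconvexity of arc stabilizers.
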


\begin{definition}
Suppose that $G$ and $D$ are groups.  We say that $D$ is {\em fully residually--$G$} if for every finite set $A \subset D$ there is a homomorphism $\phi_A \co D \to G$ which is injective on $A$.
\end{definition}

It is well known (see, for example, \cite[Theorem 2.1]{Ould-Houcine}) that if $G$ is equationally noetherian then the class of $G$--limit groups is exactly the class of finitely generated fully residually--$G$ groups.  From this, the following is straightforward.

\begin{corollary}
Let $\Gamma$ be a torsion-free hyperbolic group and let $L$ be a $\Gamma$--limit group.  Then there exists a convergent sequence $\{ \rho_n \co L \to \Gamma \}$ with $\SK(\rho_n) = \{ 1 \}$.
\end{corollary}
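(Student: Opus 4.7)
The plan is a straightforward diagonal argument using the fully residually $\Gamma$ characterization that has just been established. By Sela's equational noetherian theorem and the cited result of Ould-Houcine, $L$ is finitely generated and fully residually $\Gamma$. In particular, $L$ is countable, so I can enumerate its nontrivial elements as $g_1, g_2, \ldots$ (if $L$ is trivial there is nothing to prove, and if $L$ is finite but nontrivial the same argument goes through trivially).

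For each $n$, set $A_n = \{g_1, \ldots, g_n\}$ and invoke the fully residually $\Gamma$ property of $L$ to obtain a homomorphism $\rho_n \co L \to \Gamma$ that is injective on $A_n$. I claim the sequence $\{\rho_n\}$ is both convergent and has trivial stable kernel. Indeed, let $g \in L$. If $g = 1$, then $g \in \ker(\rho_n)$ for every $n$, satisfying condition (i) in the definition of convergence. If $g \neq 1$, then $g = g_k$ for some $k$, and for every $n \geq k$ we have $g \in A_n$, so $\rho_n(g) \neq 1$ because $\rho_n$ is injective on $A_n$. Thus $g \notin \ker(\rho_n)$ for all but finitely many $n$, satisfying condition (ii). Hence the sequence is convergent, and the same computation shows $\SK(\rho_n) = \{1\}$.

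There is no real obstacle here beyond unpacking definitions; the substance of the corollary lies entirely in Sela's theorem on equational noetherianity together with the standard equivalence (via Ould-Houcine) between $\Gamma$--limit groups and finitely generated fully residually $\Gamma$ groups. Both of those facts are quoted immediately above the statement, so the proof reduces to the diagonal enumeration just described.
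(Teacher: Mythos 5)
Your proof is correct and is precisely the standard diagonal argument the paper has in mind when it calls the corollary ``straightforward'' after quoting Sela's equational noetherianity theorem and the Ould-Houcine equivalence. Nothing to add.
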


\subsection{Abelian subgroups}

For the study of $\Gamma$--limit groups, the following definition and result are particularly useful.

\begin{definition}
A subgroup $H$ of a group $G$ is {\em malnormal} if for all $g \in G \smallsetminus H$ we have $H \cap H^g = \{ 1 \}$.

A group $G$ is called a {\em CSA group} if any maximal abelian subgroup of $G$ is malnormal.

A group $G$ is called {\em commutative transitive} if whenever $g_1, g_2, g_3 \in G$ with $g_2 \ne 1$ and
$[g_1,g_2] = [g_2,g_3] = 1$ then $[g_1, g_3] = 1$.
\end{definition}

Our interest in torsion-free CSA groups comes from the following result, which is due to Sela and implicit in \cite[Section 1]{sela:diohyp}.

\begin{theorem}\label{thm:Sela}
Suppose that $\Gamma$ is a torsion-free hyperbolic group and that $L$ is a $\Gamma$--limit group.
Then $L$ is torsion-free, CSA and all abelian subgroups of $L$ are finitely generated.
\end{theorem}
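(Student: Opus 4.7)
The three conclusions will follow from the characterization of $L$ (given in the preceding corollary and in the general fact from \cite{Ould-Houcine}) as a finitely generated, fully residually $\Gamma$ group, combined with the corresponding properties of $\Gamma$---and, for the final assertion, the equational noetherianity of $\Gamma$.

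Torsion-freeness is direct: a non-trivial torsion element $g \in L$ of order $k$ gives, via a homomorphism $\phi \co L \to \Gamma$ injective on $\{g\}$, a non-trivial element $\phi(g) \in \Gamma$ satisfying $\phi(g)^k = 1$, contradicting torsion-freeness of $\Gamma$. For CSA, I would use the standard reformulation, valid in torsion-free groups, that CSA is equivalent to the conjunction of commutative transitivity and the malnormality condition
\[
(a \neq 1) \wedge ([a, b a b^{-1}] = 1) \implies [a, b] = 1.
\]
Each clause is a universal implication with a single inequation in its hypothesis, so transfers from $\Gamma$ to $L$ by the usual fully residually argument: a hypothetical violating tuple in $L$ yields, via a $\phi \co L \to \Gamma$ injective on the finite set of witnesses (the element required to be non-trivial together with the commutator required to be non-trivial), a violating tuple in $\Gamma$, contradicting CSA for $\Gamma$.

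The remaining assertion---that every abelian subgroup of $L$ is finitely generated---is the main obstacle, since finite generation is not a first-order condition. My plan is first to reduce to the case of a maximal abelian subgroup: using CSA together with its consequence commutative transitivity, every such subgroup has the form $M = Z_L(a)$ for some $a \neq 1$. Since centralizers of non-trivial elements in a torsion-free hyperbolic group are infinite cyclic, every $\phi \co L \to \Gamma$ with $\phi(a) \neq 1$ maps $M$ into an infinite cyclic subgroup of $\Gamma$, so $M$ is fully residually $\mathbb{Z}$. This alone does not force finite generation (as witnessed, for example, by $\bigoplus_{\mathbb{N}} \mathbb{Z}$, which is fully residually $\mathbb{Z}$ but not finitely generated), so the decisive step---and, I expect, the main technical difficulty---is to invoke the equational noetherianity of $\Gamma$, which is inherited by $L$, to upgrade the residual picture to genuine finite generation. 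Concretely, I would assume for contradiction an infinite strictly ascending chain $\langle b_1 \rangle \subsetneq \langle b_1, b_2 \rangle \subsetneq \cdots$ in $M$ and use finitely many of the approximating homomorphisms $\rho_n$ produced by the convergent sequence defining $L$ to encode the chain as a strictly descending chain of $\Gamma$-algebraic subvarieties, contradicting the Hilbert basis property for $\Gamma$.
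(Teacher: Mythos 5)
Your arguments for the first two assertions are correct, and they are essentially the paper's: the paper dispatches torsion-freeness and CSA in a remark by observing that both are closed conditions in the space of marked groups, which amounts to the same transfer of universal Horn sentences along homomorphisms to $\Gamma$ injective on the finitely many witnesses that you describe.

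The genuine gap is in the third assertion, and you have correctly located the difficulty without closing it. Note first that the paper does not prove this part either: it is attributed to Sela and the reader is referred to \cite[Corollary 5.12]{groves:limitRH2}, where it is obtained as a consequence of the construction of the (restricted) Makanin--Razborov diagram --- that is, from the primary JSJ decomposition, the shortening argument, and the descending chain condition on $\Gamma$--limit quotients, not from the bare statement of equational noetherianity. Your proposed mechanism --- converting the ascending chain $M_1\subsetneq M_2\subsetneq\cdots$ inside $M=Z_L(a)$ into a strictly descending chain of $\Gamma$--algebraic sets --- does not work as described. The only descending chain that the data naturally produces is obtained by adjoining, for each $k$, the commutation constraints involving $b_1,\dots,b_k$ (for instance, in an auxiliary variable $y$, the equations $[y,b_i(\x)]=1$ for $i\le k$). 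Equational noetherianity does force such a chain to stabilise, but stabilisation only yields a statement about the centralizers of the images $\phi(M_k)$ in $\Gamma$, not about $M$ itself: since every $\phi(M)$ already lies in the cyclic group $Z_\Gamma(\phi(a))$, no finite system of equations and inequations in the generators of $L$ can detect whether $b_{k+1}\in M_k$, because a homomorphism injective on any prescribed finite subset of $L$ may still identify $b_{k+1}$ with an element of the infinite set $M_k$. This is exactly the phenomenon exhibited by your own example $\bigoplus_{\N}\Z$, so the obstruction you identified for the residual argument applies equally to the variety argument. To rule it out one genuinely needs the structure theory: roughly, a non-cyclic maximal abelian subgroup is elliptic in the primary JSJ, hence lies in an abelian vertex group, which is finitely generated because $L$ is finitely generated and the edge groups are; one then propagates this down a strict resolution terminating in subgroups of $\Gamma$, with equational noetherianity entering only to guarantee that such resolutions exist and are finite. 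A complete write-up should either reproduce that argument or, as the paper does, cite it.
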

It is easy to see that a CSA group is commutative transitive. 

\begin{remark}
Being CSA is a closed condition in the space of marked groups, as is being torsion-free.  So these properties are straightforward to prove.  However, the fact that abelian subgroups are finitely generated is much deeper, and is due to Sela.  The reader is referred to \cite[Corollary 5.12]{groves:limitRH2} for a proof.
\end{remark}

\subsection{Primary JSJ decompositions}\label{ss: Primary JSJ}

JSJ decompositions are one of the fundamental tools needed to study $\Gamma$--limit groups.  They were developed in this context by Sela \cite{sela:diohyp}; various other groups of authors have developed related theories \cite{RipsSela97,DunwoodySageev99,FuPap06,Bowditch:cutpoints}, and the whole theory was put into a unified context by Guirardel--Levitt \cite{GuiLevJSJI, GuiLevJSJII,GLCyl2011,GuiLevJSJ:Monster}. 

The specific decomposition that we will work with is called the \emph{canonical primary} JSJ decomposition.   In this section, we shall { adapt} the definitions and results of Guirardel--Levitt to define the canonical primary JSJ decomposition and to explain some of its properties.

\begin{definition} \cite[Definition 3.31]{dahmanigroves1}
Suppose that $G$ is a torsion-free CSA group.  An abelian splitting of $\Gamma$ is called {\em essential} if whenever $E$ is an edge group and $\gamma \in G$ satisfies $\gamma^k \in E$ for some $k \ne 0$ we already have $\gamma \in E$.  An essential splitting is called {\em primary} if every noncyclic abelian subgroup is elliptic.
\end{definition}

In Guirardel--Levitt's terminology, if we let $\mc{E}$ denote the set of root-closed abelian subgroups and $\mc{A}_{\nc}$ the class of non-cyclic abelian subgroups then the Bass--Serre tree of a primary splitting is \emph{an $\mc{E}$--tree relative to $\mc{A}_{\nc}$}.  {Our goal is to construct  a canonical, primary JSJ tree for $\Gamma$-limit groups.  A related JSJ tree for toral relatively hyperbolic groups was used in \cite[\S5]{guirardel_mccool_2015}, in which the Bass--Serre trees of primary splittings are called \emph{RC trees}.  Our JSJ decomposition will differ slightly from the RC-JSJ decomposition of \cite{guirardel_mccool_2015}.

We give the definition and state and prove some of the properties of the primary JSJ decomposition here.

\begin{definition} \cite[Introduction]{GuiLevJSJ:Monster}
A primary tree is \emph{universally elliptic} if every edge stabilizer is elliptic in every primary splitting. A \emph{primary JSJ tree} is a universally elliptic primary tree which is \emph{maximal} among all universally elliptic primary trees, meaning that it dominates any other universally elliptic primary tree.
\end{definition}

When we speak of a \emph{canonical} primary JSJ splitting, we mean one that is invariant under the natural action of the outer automorphism group. The construction of the canonical primary JSJ tree goes via the canonical \emph{abelian} JSJ tree.  An abelian JSJ tree is defined in the same way as a primary JSJ tree, using the set $\mc{A}$ of all abelian subgroups in the place of $\mc{E}$; again, we only consider $\mc{A}$-trees relative to $\mc{A}_\nc$. Guirardel--Levitt proved that a CSA group $G$ (such as  a $\Gamma$-limit group) has a canonical abelian JSJ tree, and described its structure \cite{GLCyl2011}.

\begin{theorem}[Guirardel--Levitt]\label{thm: Abelian JSJ}
Let $G$ be a finitely generated, one-ended, torsion-free, CSA group.  There exists a canonical abelian JSJ tree $T_\ab$, which is bipartite with the following structure:
\begin{enumerate}
\item One class of vertices has maximal abelian stabilizers.
\item The stabilizers of the other class of vertices are either \emph{rigid} or \emph{quadratically hanging (QH)} (defined below).
\item If $e$ is an edge of $T_\ab$ and $x$ is an adjacent, non-abelian vertex, then $G_e$ is maximal abelian in $G_x$.
\item Furthermore, the incident edge stabilizers at $x$ form a malnormal family. 
\end{enumerate}
In particular, $T_\ab$ is 2-acylindrical.
\end{theorem}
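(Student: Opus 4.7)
The plan is to specialize the general JSJ machinery of Guirardel--Levitt \cite{GLCyl2011} to the setting of finitely generated, one-ended, torsion-free, CSA groups, and to recover the stated structural properties from the tree-of-cylinders construction. First, one applies their general existence theorem to produce a JSJ tree for the class $\mc{A}$ of abelian subgroups relative to the family $\mc{A}_\nc$ of non-cyclic abelian subgroups; one-endedness ensures that there is no free-product factor to separate out, and the relative setting pins down non-cyclic abelian subgroups as elliptic. This yields an $\mc{A}$--tree that is universally elliptic relative to $\mc{A}_\nc$ and dominates every other such tree.

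Next, I would form the Guirardel--Levitt tree of cylinders with respect to the commutation relation on abelian edge stabilizers. In a torsion-free CSA group, commuting is an equivalence relation on nontrivial elements (commutative transitivity is immediate from CSA), and each commutation class of root-closed abelian subgroups has a canonical common stabilizer: the unique maximal abelian subgroup of $G$ containing it. The resulting tree $T_\ab$ is bipartite. One vertex class is stabilized by those maximal abelian subgroups, which gives item (1); the other class inherits, from the general Guirardel--Levitt theory of flexible vertices in abelian JSJs of CSA groups, the rigid/QH dichotomy of item (2). Canonicity under $\Out(G)$ is built into the tree-of-cylinders construction, so $T_\ab$ is indeed the canonical primary JSJ tree relative to $\mc{A}_\nc$.

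For items (3) and (4), an edge $e$ of $T_\ab$ joins a non-abelian vertex $x$ to an abelian vertex $a$, and one has $G_e = G_x \cap G_a$. Since $G_a$ is maximal abelian in $G$, $G_e$ is the unique maximal abelian subgroup of $G_x$ containing it, giving (3). If two distinct incident edges $e_1, e_2$ at $x$ had a common nontrivial $G_x$-conjugate element in their stabilizers, then two maximal abelian subgroups of $G$ would share a nontrivial element; by CSA these subgroups coincide, forcing $e_1$ and $e_2$ to land at the same cylinder vertex, contradicting their distinctness in the bipartite tree. Hence the incident edge family at $x$ is malnormal, proving (4). Finally, $2$--acylindricity is an immediate consequence: any segment of length $\geq 3$ in the bipartite tree $T_\ab$ straddles two edges meeting at a common vertex $v$; if $v$ is non-abelian the stabilizer of the segment vanishes by the malnormality just established, and if $v$ is abelian it vanishes because two distinct incident edges at $v$ have trivial intersection of stabilizers (using CSA and the fact that the vertex stabilizer is itself abelian).

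The main obstacle is verifying that passing from the initial relative JSJ tree to its tree of cylinders preserves the JSJ property, i.e.\ that universal ellipticity and maximality among primary trees carry over, and that the flexible vertices still yield QH surface groups in the specifically primary (not merely abelian) setting. This requires checking that the commutation relation is admissible in the sense of \cite{GLCyl2011} and that non-cyclic abelian subgroups remain elliptic after collapsing cylinders -- both of which follow from CSA plus the fact that every non-cyclic abelian subgroup is contained in a unique maximal abelian, and hence fixes a unique cylinder vertex of $T_\ab$.
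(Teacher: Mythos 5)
Your proposal follows essentially the same route as the paper, which simply cites Guirardel--Levitt: existence of the canonical abelian JSJ tree is \cite[Theorem 5]{GLCyl2011}, $2$-acylindricity is Proposition 6.3 of that paper, and the bipartite structure with items (1)--(4) is read off from the tree-of-cylinders description for the commutation relation, exactly as you do. One small correction: in your acylindricity argument, the claim that two distinct edges incident at an \emph{abelian} vertex have trivially intersecting stabilizers is false (in the double of a free group over a maximal cyclic subgroup $\langle w\rangle$, both edges at the cylinder vertex have stabilizer $\langle w\rangle$ --- this is precisely why the tree is $2$- and not $1$-acylindrical); that case is also unnecessary, since any segment of length at least $3$ in the bipartite tree contains a non-abelian internal vertex, where malnormality of the incident edge family already forces the segment stabilizer to be trivial.
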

\begin{proof}
Existence of the canonical abelian JSJ tree is stated as \cite[Theorem 5]{GLCyl2011}; see also Proposition 6.3 of the same paper, where it is proved that $T_\ab$ (denoted there by $T_c$) is 2-acylindrical.  The bipartite structure follows from its description as a tree of cylinders.  
\end{proof}

The \emph{rigid} vertices have the property that they admit no non-trivial abelian splitting relative to $\mc{A}_\nc$ and their incident edge groups.  Note that below we will consider vertex groups which are rigid with respect to primary splittings, rather than with respect to all abelian splittings, and this will be the meaning of the word rigid after Definition \ref{d:rigid}. 

The \emph{quadratically hanging} vertices are fundamental groups of surfaces, with edge groups attached isomorphically to the infinite cyclic subgroups corresponding to boundary components.

In order to describe the primary JSJ decomposition $\Delta$, we must first introduce the notion of a \emph{socket}.

}
\begin{definition}
Let $\Sigma$ be compact hyperbolic surface with $k\geq 0$ boundary components and let $d_1, \ldots , d_k$ be generators for the subgroups corresponding to the boundary components.  Let $n_1,\ldots,n_k$ be positive integers. The group $S$ obtained from $\pi_1\Sigma$ by attaching an $n_i$-th root to $d_i$ for each $i$ is called a \emph{socket}.  When considered as a subgroup, a socket will always refer to a vertex group in a graph-of-groups decomposition.  If $S$ is a vertex group in a graph of groups, we insist that the incident edge groups are infinite cyclic with generators identified with the roots attached to boundary components, and that every root has an incident edge group attached to it. A socket is called \emph{maximal} if it is not properly contained in any larger socket which appears as a vertex in a graph-of-groups decomposition.
\end{definition}

{
It is a usual feature of JSJ decompositions that the flexible vertices are QH subgroups.  In the case of essential splittings, however, the flexible vertices are sockets, because of the requirement that the edges be root-closed.

There is an important difference between socket groups for abelian splittings and socket groups for primary splittings, which is the difference between our primary JSJ splitting and the RC-JSJ from \cite{guirardel_mccool_2015}.  Edge stabilizers in the RC-JSJ tree are required to be elliptic in every abelian splitting rel $\mc{A}_{\nc}$, whether or not this splitting is RC (primary, in our terminology).  For the primary JSJ tree, we require edge stabilizers to be elliptic in every primary splitting.  The difference can be seen if a vertex group is a once-punctured Klein bottle, with the adjacent edge group corresponding to the puncture.  Cutting along the non-separating, two-sided simple closed curve yields a primary splitting whose edge group is elliptic in all other primary splittings of the once-punctured Klein bottle group (relative to the puncture group), but not in all abelian splittings, as witnessed by cutting along a separating curve. In the RC-JSJ of \cite{guirardel_mccool_2015}, the once-punctured Klein bottle group would be a flexible vertex, whereas in our primary JSJ decomposition, we cut along this curve and replace this once-punctured Klein bottle with two vertex groups -- one corresponding to an annular neighbourhood of the curve, and one corresponding to the rest of the surface, and we have two edge groups corresponding to the two (isotopic) curves that join these two vertex groups.  The new vertex group corresponding to the surface will be a rigid socket group.

As noted in \cite[$\S9.5$]{GuiLevJSJ:Monster}, this distinction happens on orbifolds exactly when the surface is either a Klein bottle, a once-punctured Klein bottle, or a Klein bottle with one orbifold point.  Since we are in the torsion-free CSA setting, the once-punctured Klein bottle is the only case that can arise.  Moreover, if $\Gamma$ is a one-ended torsion-free CSA group and there is an abelian edge group $E$ in some primary splitting of $\Gamma$ which is elliptic in all primary splittings of $\Gamma$ but hyperbolic in some abelian splitting $\Lambda$, then $E$ must correspond to the non-separating, two-sided curve on some once-punctured Klein bottle QH subgroup.  Indeed, we must get a hyperbolic-hyperbolic pair from $E$ and some edge group in $\Lambda$, which leads to a QH subgroup.  If we take a maximal QH subgroup, then there can be no non-separating, two-sided curve intersecting the curve for $E$, which leads us to conclude that the QH subgroup corresponds to a once-punctured Klein bottle.

We refer to these rigid vertex groups obtained from a Klein bottle in the above way as an {\em exceptional} socket.  Note that an exceptional socket is not a maximal socket.
}

{
\begin{definition} \label{d:rigid}
Suppose that $K$ is a group and $\mc{H}$ is a collection of subgroups.  We say that $(K,\mc{H})$ is {\em rigid} if $K$ does not admit any nontrivial primary splitting relative to $\mc{H}$.  If $K$ is a nonabelian vertex group in a graph-of-groups decomposition, we say that $K$ is {\em rigid} if $(K,\mc{K}_e \cup \mc{A}_{nc})$ is rigid where $\mc{K}_e$ is the set of edge groups adjacent to $K$. 

A vertex group of a primary decomposition that is  neither abelian nor rigid is called {\em flexible}.
\end{definition}
}

\begin{proposition}
Suppose that $G$ is freely indecomposable, finitely generated, torsion-free and CSA.  Any flexible vertex of a primary JSJ decomposition of $G$ is a maximal socket.
\end{proposition}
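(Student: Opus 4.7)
The plan is to derive the primary JSJ tree $T$ of $G$ from the abelian JSJ tree $T_\ab$ provided by Theorem \ref{thm: Abelian JSJ}, and to read the flexible vertices of $T$ directly off of the construction. Recall that in $T_\ab$ every flexible vertex is a QH subgroup $V\cong\pi_1\Sigma$, and each incident edge stabilizer is the infinite cyclic subgroup $\langle d\rangle$ generated by a boundary element $d$ of $\Sigma$. The tree $T_\ab$ is generally not primary, however: the adjacent maximal abelian vertex $A_d$ may contain an element $r_d$ with $r_d^{n_d}=d$ for some $n_d>1$, so that $\langle d\rangle$ need not be root-closed in $G$.

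I would first convert $T_\ab$ into a primary tree $T$ by folding each root $r_d\in A_d$ into the QH vertex $V$, replacing $V$ by the socket $S$ obtained from $\pi_1\Sigma$ by adjoining an $n_d$-th root to each boundary generator and taking the new edge stabilizer to be $\langle r_d\rangle$; since $A_d$ is maximal abelian in $G$ and $G$ is CSA, $\langle r_d\rangle$ is root-closed in $G$, and every non-cyclic abelian subgroup that was elliptic in $T_\ab$ remains elliptic. For the exceptional once-punctured Klein bottle case flagged preceding Definition \ref{d:rigid}, I would first cut along the non-separating two-sided simple closed curve, producing an exceptional socket, which is \emph{rigid} by definition and hence not a flexible vertex of the resulting tree. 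One then checks that the resulting tree $T$ is universally elliptic among primary splittings (its edge stabilizers are root-closures of edge stabilizers of $T_\ab$, and any primary splitting is in particular an abelian splitting relative to $\mc A_\nc$, so universal ellipticity transports from $T_\ab$) and that $T$ dominates every other universally elliptic primary tree (which in turn is dominated by $T_\ab$, with the folding and Klein-bottle modifications only refining the partition). Hence $T$ is the canonical primary JSJ tree, and its flexible vertices are precisely the sockets $S$ produced by the folding step.

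Finally, for maximality, suppose that such a socket $S$ is properly contained in a socket $S'\le G$ appearing as a vertex of some graph-of-groups decomposition $\Lambda$ of $G$. Writing $S'$ in terms of a surface $\Sigma'$ with boundary roots $r_d'$, the inclusion $S\hookrightarrow S'$ must either enlarge the underlying surface or deepen some boundary root. Enlarging $\Sigma$ inside $\Sigma'$ would produce a QH subgroup of $G$ strictly containing $V$ in some refinement of $\Lambda$, contradicting the maximality of $\Sigma$ among QH subgroups in $T_\ab$ (using the $2$-acylindricity of $T_\ab$ and the malnormality of incident edge families from Theorem \ref{thm: Abelian JSJ}); while a deeper root $(r_d')^k=r_d$ with $k>1$ contradicts the root-closedness of $\langle r_d\rangle$ in $G$ that was secured in the folding step. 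The main obstacle I anticipate is this last maximality step: one has to translate an abstract inclusion of sockets in $G$ into an inclusion of QH data up to folding, which is where careful use of CSA, malnormality of edge families, and the detailed structure of $T_\ab$ from Theorem \ref{thm: Abelian JSJ} will be indispensable.
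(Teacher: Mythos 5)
Your strategy is genuinely different from the paper's. The paper proves this proposition ``locally'': given a flexible vertex, it runs the construction of \cite[$\S6$]{GuiLevJSJ:Monster}, which manufactures a QH subgroup at any flexible vertex, and then observes that the root-closure requirement on primary edge groups forces one to adjoin roots and obtain a socket, with maximality coming along from the Guirardel--Levitt theory of QH subgroups. You instead argue ``globally'': construct one explicit primary JSJ tree $T$ from $T_\ab$ and read off its flexible vertices. Your folding construction and the once-punctured Klein bottle modification do match what the paper later does in the proof of Proposition \ref{p:JSJ props}, but two steps of your argument do not work as written.

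First, your verification that $T$ is maximal among universally elliptic primary trees rests on the parenthetical claim that every universally elliptic primary tree is dominated by $T_\ab$. This is false, for exactly the reason the paper dwells on: universal ellipticity \emph{for primary splittings} is weaker than universal ellipticity for abelian splittings, so such a tree need not be dominated by the abelian JSJ. The one-edge splitting obtained by cutting a once-punctured Klein bottle QH vertex of $T_\ab$ along its non-separating two-sided curve is a counterexample -- it is universally elliptic for primary splittings, yet the QH vertex stabilizer of $T_\ab$ acts hyperbolically on it, so $T_\ab$ does not dominate it. The paper's Proposition \ref{p:JSJ props} establishes maximality by a different route (universal ellipticity of the edge groups plus rigidity for the rigid vertices, and \cite[Theorem 5.27]{GuiLevJSJ:Monster} for the sockets). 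Second, the maximality of the socket -- the actual content of the word ``maximal'' in the statement -- is not proved: you flag the translation of an abstract inclusion of sockets into an inclusion of QH data as the main obstacle, and the dichotomy ``enlarge the surface or deepen a root'' is not justified; this is precisely what the appeal to the QH-subgroup machinery of \cite{GuiLevJSJ:Monster} supplies in the paper's proof. Two smaller points: the statement concerns \emph{any} primary JSJ decomposition, so you need the standard fact that flexible vertices are invariants of the JSJ deformation space before restricting to your particular $T$; and you must check that the non-QH vertices of $T_\ab$ remain rigid in $T$, since the folding can enlarge those vertex groups (by adjoining roots of their incident edge groups) as well.
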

\begin{proof}
This can be proved by following the proof from \cite[$\S6$]{GuiLevJSJ:Monster}, which builds a QH subgroup, then noting that in order to find a primary splitting we need to adjoin the roots to the QH subgroup in order to build a socket.  This is also similar to \cite{sela:SRII,dahmaniguirardel:iso,dahmanigroves1}, and can be proved by similar techniques to any of those papers.
\end{proof}

Note that the converse to the above proposition does not hold: there can also be rigid vertices that are sockets, necessarily built by attaching roots to the boundary components of a thrice--punctured sphere. When discussing the JSJ below, it will still be important to insist that any such vertices are {either exceptional sockets or maximal} sockets.

{
\begin{remark}
Note that our terminology differs from that of \cite{guirardel_mccool_2015}, in which the non-abelian vertices are called \emph{rigid with sockets} and \emph{QH with sockets}.\end{remark}
}
We apply these ideas in our setting.  Let $\Gamma$ be a torsion-free hyperbolic group and let $L$ be a $\Gamma$--limit group.  In particular, $L$ is a torsion-free CSA group and all abelian subgroups of $L$ are finitely generated.

\begin{proposition} \label{p:JSJ props}
If $L$ is a freely indecomposable $\Gamma$--limit group then $L$ has a canonical primary JSJ decomposition $\Delta$.  Furthermore, $\Delta$ has the following properties.
\begin{enumerate}
\item Vertices of $\Delta$ fall into three classes (denoted by $u_i,v_j$ and $w_k$ respectively) with the following properties:
\begin{enumerate}
\item $\Delta_{u_i}$ is \emph{rigid} for each $i$ (and, if it is a socket, is {either an exceptional socket or} a maximal socket);
\item $\Delta_{v_j}$ is a maximal socket, for each $j$;
\item $\Delta_{w_k}$ is abelian.
\end{enumerate}
\item $\Delta$ is bipartite, with only abelian vertices adjacent to rigid or socket vertices.
\item If $v$ is a vertex associated to a nonabelian vertex group $\Delta_v$ then every edge group adjacent to $\Delta_v$ is maximal abelian in $\Delta_v$. 
\item $\Delta$ is $2$--acylindrical.
\end{enumerate}
Conversely, any primary splitting $\Delta$ that satisfies the above properties is the canonical primary JSJ.
\end{proposition}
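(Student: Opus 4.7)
The plan is to derive $\Delta$ as a canonical modification of the abelian JSJ tree $T_\ab$ provided by Theorem~\ref{thm: Abelian JSJ}, applied to $L$. Theorem~\ref{thm:Sela} gives that $L$ is torsion-free, CSA, and has finitely generated abelian subgroups, and free indecomposability gives that $L$ is one-ended, so Theorem~\ref{thm: Abelian JSJ} applies. The non-abelian vertices of $T_\ab$ are either rigid (relative to abelian splittings) or quadratically hanging; the transformation from abelian JSJ to primary JSJ consists of converting QH subgroups to sockets while also refining at the exceptional once-punctured Klein bottle vertices.

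More precisely, first, at each QH vertex $\Sigma$ that is not an exceptional once-punctured Klein bottle, each boundary generator $d_i$ has a root-closure $\langle r_i\rangle$ inside the adjacent maximal-abelian vertex group (which is cyclic because $d_i$ is root-closed in any primary splitting by CSA); one replaces $\Sigma$ by the maximal socket $S$ obtained by adjoining the $r_i$, and refines the adjacent abelian vertex and edges so that the new edge groups are the $\langle r_i\rangle$. Second, at each exceptional QH vertex one cuts along the canonical non-separating two-sided simple closed curve, producing an annular abelian vertex together with an exceptional rigid socket, as explained in the paragraphs preceding Definition~\ref{d:rigid}. Both modifications are canonical, so the resulting splitting $\Delta$ is invariant under $\Out(L)$.

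To verify that $\Delta$ is a primary JSJ tree, universal ellipticity in primary splittings is inherited from universal ellipticity of the edge groups of $T_\ab$, together with the observation that the newly introduced Klein bottle edges are elliptic in all primary splittings (this is the content of the discussion preceding Definition~\ref{d:rigid}). Maximality amounts to showing that no vertex of $\Delta$ admits a further primary splitting relative to its incident edges and non-cyclic abelians, all of whose edge stabilizers are universally elliptic in primary splittings: rigid and abelian vertices admit no such splitting by definition, and any essential cut of a socket vertex along a simple closed curve on the underlying surface would form hyperbolic--hyperbolic pairs with other primary splittings of the socket, violating universal ellipticity. Properties (1)--(4) then transfer from the corresponding properties of $T_\ab$: bipartiteness and $2$--acylindricity are preserved by the local modifications (the cylinder structure is essentially unchanged), and edge groups remain maximal abelian in the non-abelian vertex groups since we only adjoined roots on the abelian side. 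The converse is handled by observing that any primary splitting $\Delta'$ satisfying (1)--(4) has edge groups that are universally elliptic in primary splittings (by vertex rigidity together with maximal abelianness) and is maximal (a further primary refinement would contradict (1)), so $\Delta'$ coincides with $\Delta$ by the uniqueness of the primary JSJ up to $\Out(L)$-equivariant isomorphism.

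The main obstacle will be the careful treatment of the exceptional once-punctured Klein bottle vertices: verifying that our canonical cut is genuinely $\Out(L)$-invariant, that the resulting exceptional socket is rigid with respect to all primary splittings relative to its incident edges, and that the abelian vertex introduced by the cut does not destroy $2$--acylindricity. The other delicate point is the maximality argument, where one must systematically produce hyperbolic--hyperbolic pairs to rule out any primary refinement of a socket vertex along a simple closed curve on the underlying surface.
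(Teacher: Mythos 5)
Your overall strategy---start from the canonical abelian JSJ tree $T_\ab$, adjoin roots at the QH vertices to form sockets, cut the once-punctured Klein bottle vertices along the two-sided curve, and then verify properties (1)--(4) and the converse---is the same as the paper's. However, there is a genuine gap in your converse. You conclude that a primary splitting $\Delta'$ satisfying (1)--(4) ``coincides with $\Delta$ by the uniqueness of the primary JSJ up to $\Out(L)$-equivariant isomorphism.'' No such uniqueness holds: JSJ trees are only well-defined up to deformation space, and distinct, non-equivariantly-isomorphic primary JSJ trees exist in general (one can refine or collapse within the deformation space). Showing that $\Delta'$ is \emph{a} primary JSJ tree does not show it is \emph{the canonical} one. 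The paper closes this gap by observing that conditions (2)--(4) force the Bass--Serre tree of $\Delta'$ to be equal to its own (collapsed) tree of cylinders, and then invoking \cite[Corollary 7.4]{GuiLevJSJ:Monster} to deduce canonicity; some argument of this kind is indispensable, and it is also how the paper establishes canonicity of the tree $T$ constructed in the existence direction (your appeal to the modifications being ``canonical'' is acceptable there, but the converse cannot be finessed this way).

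A secondary weakness: your claim that the edge groups of $\Delta'$ are universally elliptic ``by vertex rigidity together with maximal abelianness'' does not follow. Rigidity of an adjacent vertex group says only that the vertex group admits no further relative primary splitting; it does not prevent a cyclic edge group from being hyperbolic in some \emph{other} primary splitting of $L$. The actual argument (as in the paper) supposes $\Delta'_e$ is hyperbolic in some primary splitting, passes to a refinement $\Lambda_0$ of a primary JSJ dominating $\Delta'$, deduces that $\Delta'_e$ must correspond to a simple closed curve in a socket of the JSJ, and then derives a contradiction with the hypothesis that the adjacent nonabelian vertex of $\Delta'$ is either an exceptional or maximal socket (condition (1a)) or a maximal socket (condition (1b)). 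This is precisely where the maximality and exceptionality clauses in (1) are used, and your sketch does not engage with them. Similarly, maximality among universally elliptic trees means dominating every universally elliptic primary tree $\Theta$, which requires showing each vertex group of $\Delta'$ is elliptic in $\Theta$ (for sockets this uses the ellipticity of QH subgroups in the abelian JSJ deformation space); it is not the same as saying $\Delta'$ admits no further refinement.
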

{
\begin{proof}
We begin by showing that the primary JSJ decomposition exists.  We start with the canonical abelian JSJ tree $T_\ab$, guaranteed by Theorem \ref{thm: Abelian JSJ}, and explain how to construct $T$, the Bass--Serre tree of the canonical primary JSJ decomposition $\Delta$, following \cite[\S5]{guirardel_mccool_2015}.

For each edge $e$ incident at some abelian vertex $w$, let $\overline{G}_e$ be the subgroup consisting of the roots of elements of the edge stabilizer $G_e$.  The union of the translates $\overline{G}_e.e$ is a subtree with every edge adjacent to $w$.  For each edge $e$, fold the corresponding subtree to a single edge $\bar{e}$ incident at $w$, with stabilizer $\overline{G}_e$.  The tree $T'$ is then the minimal $G$-invariant subtree of the result of performing these folds.  (More precisely: $T'$ is trivial if $G$ acts with a global fixed point on the folded tree, and otherwise it is \emph{the} minimal $G$-invariant subtree.)

The other modification to $T_\ab$ which is required is as follows.  Suppose that $\Delta_{v_i}$ is a QH-subgroup of $T_{\ab}$ with underlying surface a once-punctured Klein bottle.  (See the above discussion for an explanation of why the once-punctured Klein bottle is the only relevant case.)  Then $\Delta_{v_i}$ admits a single primary splitting relative to the adjacent edge group, corresponding to cutting the Klein bottle along the two-sided simple closed curve.  Such a splitting is universally elliptic for primary splittings, but not for abelian splittings.  Thus, cut along this curve and replace the once-punctured Klein bottle with an exceptional socket group, along with an extra abelian vertex group corresponding to the curve.

Having performed these modifications we have a tree $T$, which we claim satisfies properties (1)--(4).

We first prove that the resulting tree satisfies properties (1)--(4) from the statement of the result, and then that any splitting which satisfies these properties is a canonical primary JSJ decomposition.

The vertex groups of $T$ come in different kinds, depending on how they arose from the tree $T_{ab}$.  It is clear from the construction that the vertex groups of $T$ can be partitioned into the required types from the statement of (1), and that the resulting structure is bipartite according to the scheme described in (2).  Statement (3) follows immediately from the construction and the corresponding property for $T_{ab}$, and statement (4) follows immediately from statement (3).  
It follows from conditions (2)--(4) that $T$ is equal to its own (collapsed) tree of cylinders, and then it follows from \cite[Corollary 7.4]{GuiLevJSJ:Monster} that $T$ is canonical.

We next prove the converse.    Let $\Delta$ be a primary splitting satisfying items (1) to (4).  We will first show that $\Delta$ is a primary JSJ splitting; to do this, we check first that it is universally elliptic among all primary decompositions, and then that it is maximal among all universally elliptic primary decompositions.

To check that $\Delta$ is universally elliptic, let $\Upsilon$ be a (without loss of generality one-edge) primary splitting and let $e$ be an edge of $\Delta$.  If $\Delta_e$ is non-cyclic then it is elliptic in $\Upsilon$ by hypothesis, so we need only consider the case in which $\Delta_e$ is cyclic. 
Suppose that $\Delta_e$ is not elliptic in $\Upsilon$.  Let $\Lambda$ be a primary JSJ decomposition of $L$.  By \cite[Lemma 2.8]{GuiLevJSJ:Monster} some refinement $\Lambda_0$ of $\Lambda$ dominates $\Delta$.  Since edge stabilizers in $\Delta$ and $\Lambda_0$ are both abelian and $\Lambda_0$ is primary, there is an edge group in $\Lambda_0$ which is conjugate to $\Delta_e$.  However, $\Delta_e$ is not universally elliptic, so this edge group in $\Lambda_0$ must correspond to a simple closed curve $c$ in a surface associated to a socket subgroup of $\Lambda$.  

Let $\Delta_v$ be the nonabelian vertex group adjacent to $\Delta_e$.  The above argument shows that every cyclic edge group of $\Delta$ is elliptic in $\Lambda_0$, and thus every edge group is elliptic, because $\Lambda_0$ is primary.  If $\Delta_v$ is rigid, this implies that $\Delta_v$ is elliptic in $\Lambda$, and hence
must be contained in part of the socket containing the group associated to $c$.  Thus, in this case, $\Delta_v$ is a socket which is neither exceptional nor maximal, which contradicts property (1a).  On the other hand, if $\Delta_v$ is a socket, then it is clearly not maximal, which contradicts property (1b). 
  This proves that $\Delta_e$ is elliptic in $\Upsilon$, and therefore $\Delta$ is a universally elliptic splitting.

To prove that $\Delta$ is maximal universally elliptic, we must prove that all vertex groups of $\Delta$ are elliptic in any universally elliptic primary splitting $\Theta$ (see \cite[Definition, p.4]{GuiLevJSJ:Monster}).  Since the edge groups are elliptic in $\Theta$, it is clear that the rigid vertex groups of $\Delta$ must be elliptic in $\Theta$.  That abelian vertex groups of $\Delta$ are elliptic in $\Theta$ follows quickly from the fact that $G$ is one-ended and CSA, and the fact that $\Theta$ is a primary splitting.

Finally, suppose that $\Delta_{v_j}$ is a socket vertex group of $\Delta$.  Then there is a natural subgroup $H \le \Delta_{v_j}$ (the fundamental group of the surface $\Sigma$ from the definition, before the roots are added).  It is clear that $H$ is a {\em QH--subgroup}, in the sense of \cite[Definition 5.13]{GuiLevJSJ:Monster}.  Therefore, by \cite[Theorem 5.27]{GuiLevJSJ:Monster}, $H$ is elliptic in the JSJ deformation space corresponding to {\em all} abelian edge groups relative to noncyclic abelian edge groups.  However, since we are only considering primary splittings, this implies that the socket of $H$ is elliptic in $\Theta$, as required.  Thus $\Delta$ is maximal universally elliptic, and so $\Delta$ is a  primary JSJ decomposition.

Finally, we note that conditions (2) to (4) imply that the Bass--Serre tree of $\Delta$ is its own (collapsed) tree of cylinders, and hence is a canonical primary JSJ tree by the argument applied to $T$ above.  This completes the proof.
\end{proof}
}

For brevity, we will call the decomposition $\Delta$ produced by the above theorem the \emph{JSJ} decomposition of $L$.  When a freely indecomposable $\Gamma$--limit group $L$ is not a subgroup of $\Gamma$ (and often when it is) it admits a nontrivial primary splitting.  Therefore, unless $L$ is a (socket of a) surface group, the primary JSJ decomposition of $L$ is nontrivial.

Let $\mathcal{H}$ be a family of subgroups of $L$, closed under conjugacy.  We will also sometimes need to work with the Grushko and JSJ decompositions of $L$ \emph{relative to $\mathcal{H}$}.  These decompositions exist for the same reasons as above -- for example, instead of taking the JSJ decomposition for $(\mc{E},\mc{A}_{\nc})$--trees, take the JSJ decomposition for $(\mc{E},\mc{A}_{\nc}\cup \mc{H})$--trees.
The work in \cite{GuiLevJSJ:Monster} works in the relative setting also.

\subsection{Modular automorphisms and envelopes}

One important role of the JSJ is to encode an associated collection of automorphisms of a limit group $L$, namely the \emph{modular group} $\Mod(L)$ (cf.\ \cite[Definition 5.2]{sela:dio1}).  We briefly recall the definition of a Dehn twist.  Suppose a group $G=A*_CB$ and $z$ is in the centralizer $Z(C)$.  Then the assignments  $\delta_z(a)=a$ for all $a\in A$ and $\delta_z(b)=zbz^{-1}$ for all $b\in B$ define an automorphism $\delta_z$ of $G$, called a \emph{Dehn twist} in the splitting $G=A*_CB$. Similarly, if $G=A*_C$ with stable letter $t$, and $z$ is in the centralizer $Z(C)$, then the assignments $\delta_z(a)=a$ for all $a\in A$ and $\delta_z(t)=tz$ define an automorphism $\delta_z$ of $G$, called a \emph{Dehn twist} in the splitting $G=A*_C$.

\begin{definition}\label{d:Gen Dehn twist and modular group}
Let $L$ be a $\Gamma$-limit group, and let $\Delta$ be its canonical primary JSJ decomposition. An automorphism $\alpha$ of $L$ is called a \emph{generalized Dehn twist} if it is of one of the following forms.
\begin{enumerate}
\item A Dehn twist in a one-edge splitting of $L$ obtained by contracting all but one edges of $\Delta$.
\item A Dehn twist in a one-edge splitting of $L$ obtained by cutting along a two-sided simple closed curve in a socket vertex of $\Delta$.
\item Let $\Delta_w$ be an abelian vertex group of $\Delta$. Then $\alpha$ restricted to $\Delta_w$ fixes every edge group incident at $\Delta_w$ (equivalently, $\alpha$ fixes the peripheral subgroup $\overline{P}(\Delta_w)$ defined in Definition \ref{d:Envelope}), and $\alpha$ acts as the identity on every other vertex group and stable letter of $\Delta$.  
\end{enumerate}
The \emph{modular group} of $L$, denoted by $\Mod(L)$, is the subgroup of $\Out(L)$ generated by the equivalence classes of generalized Dehn twists.  The elements of the modular group are called \emph{modular automorphisms}.
\end{definition}

Note that modular automorphisms act as inner automorphisms on rigid vertex groups.  Furthermore, certain slightly larger subgroups associated to a rigid vertex group are also preserved (up to conjugacy) by modular automorphisms.  These larger subgroups were named the \emph{envelopes} of rigid vertices by Bestvina--Feighn \cite{BFnotes}.

Working with the canonical JSJ, we are able to simplify slightly the definition of an envelope.

\begin{definition} \label{d:Envelope}
Let $\Delta$ be the JSJ of $L$, and $A$ an abelian vertex stabilizer.  The subgroup $P(A)\subseteq A$ is generated by the incident edge stabilizers in $A$, and the \emph{peripheral subgroup} $\overline{P}(A)$ is the minimal direct factor of $A$ containing $P(A)$.

The \emph{envelope} of $V$, $E(V)$, is then generated by $V$ together with the peripheral subgroups of the adjacent (abelian) vertex stabilizers. 
\end{definition}

\subsection{Strict  homomorphisms and resolutions} \label{s:strict}

Together with the JSJ decomposition, the second pillar of the structure theory of $\Gamma$--limit groups is provided by the notion of a strict homomorphism.  The following definition is rather lengthy, but the idea is that there exists a homomorphism $L\to R$, where $R$ is a `simpler' limit group, and the homomorphism has prescribed behaviour on each part of the JSJ of $L$.

\begin{definition}[Strict Homomorphism; Sela \cite{sela:dio1}, Definition 5.9]\label{d:strict}
Let $G$ be a finitely generated group, and let $G = G_1 \ast G_2 \ast \cdots \ast G_n \ast \F$ be the Grushko decomposition of $G$, where $\F$ is a free group and the $G_i$ are freely indecomposable.  Suppose that $\Lambda_i$ is a primary splitting of $G_i$.

Let $H$ be a finitely generated group.  A homomorphism $\eta \co G \to H$ is {\em strict with respect to the splittings $\Lambda_i$} if there is a free product decomposition $H = H_1 \ast \cdots \ast H_n$ of $H$ (no assumptions are made about the decomposability of the $H_i$) so that $\eta(G_i) = H_i$ and for each $i$ we have that:
\begin{enumerate}
\item $\eta$ is injective when restricted to envelopes of rigid vertex groups of $\Lambda_i$;
\item The image under $\eta$ of each socket vertex group of $\Lambda_i$ is nonabelian in $H_i$;
\item If $E$ is an edge group of $\Lambda_i$ corresponding to a boundary component of a socket-type vertex then $\eta |_{E}$ is injective; and
\item If $A$ is an abelian vertex group of $\Lambda_i$, and $\overline{P}(A)$ is the peripheral subgroup of $A$ then $\eta |_{\overline{P}(A)}$ is injective.
\end{enumerate}

If $G$ is a  freely indecomposable $\Gamma$--limit group and $\Lambda$ is the primary JSJ decomposition of $G$ then a homomorphism $\eta \co G \to H$ is {\em strict} if it is strict with respect to $\Lambda$.  Note that in the case the $\Lambda_i$ are the canonical primary JSJ splittings, item 4 implies item 3.
\end{definition}

\begin{definition} \label{d:Converge}
Let $G$ be a finitely generated group, $L$ a $\Gamma$--limit group and $\{ f_n \co G \to \Gamma \}$ a
convergent sequence of homomorphisms.  We say that {\em $\{ f_n \}$ converges to $L$} if
\[	L \cong G / \SK \{ f_n \} .	\]
\end{definition}

\begin{definition}\label{d: Strict resolution}
Suppose that $G$ is a finitely generated group, equipped with its Grushko decomposition and primary splittings $\Lambda_i$ of the free factors as in Definition \ref{d:strict}.  A {\em strict resolution of $G$} is a sequence of epimorphisms
\[	 G \onto L_0 \onto L_1 \onto \ldots \onto L_n	,	\]
where $L_n$ is a free product of subgroups of $\Gamma$ and each map $G \onto L_0$ and $L_i \onto L_{i+1}$ is a strict map (with respect to some free and primary splittings of $L_i$).
\end{definition}

The next theorem now characterizes $\Gamma$--limit groups in terms of strict resolutions. Although this characterization is significantly more complicated than the original definition, it comes equipped with much more information.   In particular, it will allow us to describe a $\Gamma$--limit group $G$ inductively, in terms of `simpler' $\Gamma$--limit groups.

\begin{theorem} \cite[Theorem 1.31]{sela:diohyp}, cf. \cite[Theorem 5.12]{sela:dio1} \label{t:strict res implies limit group}
A finitely generated group $G$ is a $\Gamma$--limit group if and only if it admits a strict resolution.

Furthermore, each of the groups $G$ and $L_i$ in the resolution are $\Gamma$--limit groups, and we can take the abelian splittings of the free factors of the $L_i$ to be the canonical primary JSJ decomposition of the free factors. 
\end{theorem}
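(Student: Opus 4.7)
The plan is to prove both directions by induction on the length $n$ of the resolution, with modular automorphisms providing the mechanism for constructing discriminating homomorphisms. Throughout, I will use the characterization of $\Gamma$-limit groups as finitely generated fully residually $\Gamma$ groups (valid because $\Gamma$ is equationally noetherian by Sela's theorem cited above).

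For the ``if'' direction, I would proceed by downward induction on $i$. The base case is $L_n$, which is a free product of subgroups of $\Gamma$, hence visibly fully residually $\Gamma$ by discriminating each factor separately and combining along the Bass--Serre tree. For the inductive step, assume $L_{i+1}$ is a $\Gamma$-limit group with a discriminating family $\{\psi_k \co L_{i+1} \to \Gamma\}$, and let $\lambda_i \co L_i \to L_{i+1}$ be the given strict map. Given a finite $S \subset L_i$, I would construct $\alpha \in \Mod(L_i)$---a product of high powers of Dehn twists in abelian vertex groups and of mapping-class-group elements in socket vertices of the primary JSJ of $L_i$---so that $\lambda_i \circ \alpha$ is injective on $S$. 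The four strictness conditions of Definition \ref{d:strict} are precisely what permit this: elements supported in rigid envelopes or abelian peripheral subgroups are already preserved by $\lambda_i$ (conditions (1) and (4)); elements that straddle multiple vertex groups can be separated by twisting, using the malnormality of adjacent edge groups at nonabelian vertices (Proposition \ref{p:JSJ props}(3)); and elements living in socket vertices are separated using pseudo-Anosov-type twists, exploiting the nonabelian image of sockets (condition (2)) together with injectivity on boundary subgroups (condition (3)). Composing $\lambda_i \circ \alpha$ with $\psi_k$ for $k$ sufficiently large then yields the desired injection on $S$.

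For the ``only if'' direction, I would start with a convergent sequence $\{f_n \co G \to \Gamma\}$ with $\SK(f_n) = \{1\}$, guaranteed by the corollary stated earlier, and apply the shortening argument: for each $n$, choose $\alpha_n \in \Mod(G)$ that minimizes the length of $f_n \circ \alpha_n$ on a fixed generating set, pass to a convergent subsequence, and define $L_1 := G/\SK(f_n \circ \alpha_n)$. The induced quotient $\lambda_0 \co G \onto L_1$ can then be shown to be strict with respect to the canonical primary JSJ of $G$: the Bestvina--Paulin construction applied to $\{f_n \circ \alpha_n\}$ yields an isometric action of $G$ on an $\R$-tree whose arc stabilizers recover the JSJ structure, and the minimality ensured by shortening rules out precisely the degenerations that would violate conditions (1)--(4). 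Iterating produces a sequence $G \onto L_0 \onto L_1 \onto \cdots$ in which each connecting map is strict.

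The main obstacle is proving termination of this sequence in a free product of subgroups of $\Gamma$. The standard approach assigns to each $L_i$ a well-founded complexity---for example a tuple combining first Betti number, Scott complexity of the JSJ graph, and Euler characteristics of socket surfaces---and shows it strictly decreases under a proper shortening quotient; accessibility for the Grushko and JSJ decompositions (which is available because $L_i$ is finitely generated, torsion-free, CSA, with finitely generated abelian subgroups by Theorem \ref{thm:Sela}) guarantees the complexity is well-founded. At minimal complexity, each freely indecomposable factor admits no proper shortening quotient and no nontrivial primary JSJ decomposition, which forces it to embed in $\Gamma$ via the Bestvina--Paulin limiting action on a trivial $\R$-tree. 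The final sentence of the theorem follows from the construction itself: the shortening procedure is carried out relative to the canonical primary JSJ of each free factor at each stage, and strictness with respect to an arbitrary primary splitting is preserved under refinement to the canonical primary JSJ, which dominates all other primary splittings by Proposition \ref{p:JSJ props}.
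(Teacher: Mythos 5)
First, note that the paper offers no proof of this statement: it is imported directly from Sela (\cite[Theorem 1.31]{sela:diohyp}), so there is no in-paper argument to compare yours against, and what you have written is an attempt to reprove Sela's theorem. Your sketch of the ``if'' direction follows the standard route (downward induction, precomposition with generic modular automorphisms, using the four strictness conditions and the $2$--acylindricity and malnormality properties of the JSJ to separate a finite set), and in outline this is correct, although the existence of the required twists is the technical heart of Sela's argument and is asserted rather than proved here.

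The genuine gap is in your termination argument for the ``only if'' direction. You propose a well-founded complexity (first Betti number, Scott complexity of the JSJ graph, Euler characteristics of socket surfaces) that strictly decreases under a proper shortening quotient. No such direct complexity argument is known to work: a proper epimorphism of $\Gamma$--limit groups need not decrease any of these quantities (for instance $\Z^2\ast\Z\onto\Z^3$ preserves the first Betti number), and accessibility bounds the decompositions of a \emph{fixed} group rather than controlling these invariants under proper quotients. The correct tool is the descending chain condition: there is no infinite sequence of proper epimorphisms of $\Gamma$--limit groups, and this is a consequence of the equational noetherianity of $\Gamma$ --- the very fact you invoke at the outset but then use only for the fully-residually-$\Gamma$ characterization. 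Since each shortening quotient map is a proper epimorphism unless the limiting $\R$--tree is trivial, the DCC forces termination, and triviality of the limiting tree means the discriminating sequence is eventually a sequence of conjugates of a fixed embedding, i.e.\ the factor embeds in $\Gamma$. In particular your stated terminal condition (``no nontrivial primary JSJ decomposition'') is wrong: terminal factors are merely subgroups of $\Gamma$, and such subgroups can carry nontrivial JSJ decompositions --- this is exactly the phenomenon exploited in Example \ref{ex:KM}. Finally, strictness of the shortening quotient map is not automatic from ``minimality'': conditions (1) and (4) of Definition \ref{d:strict} do follow because modular automorphisms act by conjugation on envelopes and fix peripheral subgroups, but condition (2) (nonabelian image of socket vertex groups) can fail, and Sela must modify the resolution in these degenerate cases.
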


\begin{remark} \label{rem:different resolution defn}
By Theorem \ref{t:strict res implies limit group}, we lose nothing by assuming up front that the groups $L_i$ in a strict resolution of a finitely generated group $G$ are $\Gamma$--limit groups.

Once we assume that the groups involved are $\Gamma$--limit groups, there is no reason to assume that each of the homomorphisms in a resolution are surjective (since finitely generated subgroups of $\Gamma$--limit groups are themselves $\Gamma$--limit groups).  

Moreover, the terminal group $L_n$ may be assumed merely to admit a strict map to $\Gamma$, rather than be a free product of subgroups of $\Gamma$, since Theorem \ref{t:strict res implies limit group} still implies that $L_n$ is a $\Gamma$--limit group. In this case, we may have two resolutions like $\lambda \co L_n \to \Gamma$ and $L_n \to \lambda_n(L_n) \to \Gamma$.  In case the freely indecomposable free factors of $\lambda(L_n)$ admit nontrivial JSJ decompositions, this second resolution encodes more homomorphisms from $L_n$ to $\Gamma$ than the first one.

These two changes give us some extra flexibility which is central to the constructions in this paper.
\end{remark} 

In this paper, we use a strict resolution $\res$ of a freely indecomposable $\Gamma$--limit group $L$ in order to embed $L$ in a $\Gamma$--limit group $\model_{\res}$ so that $\model_{\res}$ is finitely presented, and furthermore toral relatively hyperbolic.  This allows us to give an algorithmic enumeration of $\Gamma$--limit groups, even though not all $\Gamma$--limit groups need be finitely presented.

We make the following definition in order to capture one of the features of strict homomorphisms in a more general setting.

\begin{definition} \label{d:degenerate}
Suppose that $G$ is a group and $\mathcal P$ is a finite collection of subgroups of $G$.  We say that a homomorphism $\eta \co G \to H$ (for some group $H$) is {\em non-degenerate with respect to $\mc{P}$} if $P \cap \ker(\eta) = \{ 1 \}$ for all $P \in \mathcal{P}$.  Otherwise, we say that $\eta$ is {\em degenerate with respect  to $\mc{P}$}.
\end{definition}

\begin{observation}
Suppose that $L$ is a freely indecomposable $\Gamma$--limit group and that $\mc{D}$ is the set of edge groups of the primary JSJ decomposition of $L$.  If $\eta \co L \to R$ is a strict map then $\eta$ is non-degenerate with respect to $\mc{D}$.
\end{observation}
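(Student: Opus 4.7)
The plan is to deduce the observation directly from the injectivity conditions built into the definition of a strict homomorphism, together with the bipartite structure of the canonical primary JSJ decomposition $\Delta$ of $L$. Fix an edge $e$ of $\Delta$, let $D=\Delta_e$, and aim to show $\ker(\eta)\cap D=\{1\}$.

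The key structural input is Proposition \ref{p:JSJ props}(2): the decomposition $\Delta$ is bipartite, with every edge joining an abelian vertex to either a rigid vertex or a socket vertex. Thus one endpoint of $e$ is an abelian vertex $w$ with vertex group $A=\Delta_w$, and (up to conjugation into $A$) we have $D\subseteq P(A)\subseteq\overline{P}(A)$ by the definition of the peripheral subgroup. Item (4) of Definition \ref{d:strict}, applied to the vertex $A$, immediately gives that $\eta|_{\overline{P}(A)}$ is injective, hence so is $\eta|_D$. Since $\mc{D}$ is the collection of all such edge groups (closed under conjugation), non-degeneracy of $\eta$ with respect to $\mc{D}$ follows.

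No serious obstacle arises here: the conclusion is essentially bookkeeping once the bipartite structure is invoked. The final sentence of Definition \ref{d:strict} already records that, in the canonical primary JSJ setting, condition (4) subsumes condition (3), so no separate argument is needed at the socket side of an edge. If one preferred to argue from the non-abelian side of $e$, the same conclusion would follow for edges at rigid vertices from item (1) (since the envelope $E(\Delta_u)$ of a rigid vertex $u$ contains $\Delta_u\supseteq D$) and for edges at socket vertices from item (3); but invoking (4) via the bipartite structure avoids any case split entirely.
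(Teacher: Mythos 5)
Your proof is correct and is exactly the argument the authors leave implicit (the paper states this as an Observation with no proof): the bipartite structure of the canonical primary JSJ places every edge group inside the peripheral subgroup $\overline{P}(A)$ of its abelian endpoint, and condition (4) of Definition \ref{d:strict} then gives injectivity on each edge group, i.e.\ non-degeneracy with respect to $\mc{D}$.
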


\subsection{Relative hyperbolicity}

One of the key properties of the model $\Gamma$--limit groups which we construct in this paper is that they are toral relatively hyperbolic, which means that there are many available algorithmic tools which can be applied to them (see, for example 
\cite{Dah-FindRH,dahmanigroves2,dahmanigroves1,dahmaniguirardel:iso}).   In this section, we recall the basic definitions, and some results which we need in order to prove that model $\Gamma$--limit groups are toral relatively hyperbolic.

\begin{definition}\label{d:TRH}
A group is called \emph{toral relatively hyperbolic} if it is torsion-free and hyperbolic relative to finitely generated abelian subgroups. There are many different equivalent definitions of relative hyperbolicity; we refer to \cite{HruskaQC} for a summary of many of them, and proofs that they are equivalent.
\end{definition}

A toral relatively hyperbolic group admits a canonical set of peripheral subgroups, namely a set of representatives for the conjugacy classes of maximal noncyclic abelian subgroups.  We always take this to be the peripheral structure, and so we often leave the peripheral structure unmentioned.

We will construct our relatively hyperbolic models by gluing.  Combination theorems for relatively hyperbolic groups were proved by Alibegovi\'c \cite{alibegovic} and Dahmani \cite{Dahmani03}, and subsequently by Mj--Reeves \cite{MjReeves}, and others.  We state Dahmani's combination theorem here.

\begin{theorem} \cite[Theorem 0.1]{Dahmani03} \label{t:DahComb}
\begin{enumerate}
\item Let $\Gamma$ be the fundamental group of a finite acylindrical graph of relatively hyperbolic groups,
whose edge groups are fully quasi-convex subgroups of the adjacent vertex groups.  Let $\mathcal G$ be the
collection of images of the maximal parabolic subgroups of the vertex groups in $\Gamma$, and their conjugates.
Then $(\Gamma, {\mathcal G})$ is a relatively hyperbolic group.
\item Let $G$ be a group which is hyperbolic relative to $\mathcal G$, and let $P \in \mathcal G$.  Let $A$ be a finitely generated group which has $P$ as a subgroup.  Then $\Gamma = A \ast_P G$ is hyperbolic relative to $(\mathcal H \cup \mathcal A)$, where $\mathcal H$ is the set of $\Gamma$--conjugates of elements of $\mathcal G \smallsetminus P^G$ and $\mathcal A$ is the set of $\Gamma$--conjugates of $A$.
\item Let $G_1$ and $G_2$ be relatively hyperbolic groups, let $P$ be a maximal parabolic subgroup of $G_1$ and suppose that $P$ can be embedded in a parabolic subgroup of $G_2$.  Then $\Gamma = G_1 \ast_P G_2$ is relatively hyperbolic.
\item Let $G$ be a relatively hyperbolic group and let $P, P'$ be non-conjugate isomorphic parabolic subgroups of
$G$.  Then $\Gamma = G \ast_P$ is relatively hyperbolic.
\end{enumerate}
\end{theorem}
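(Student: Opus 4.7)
The plan is to establish all four statements through a unified gluing construction using the cusped-space model of relative hyperbolicity, in the spirit of Groves--Manning. Recall that a torsion-free group $G$ is hyperbolic relative to $\mathcal{G}$ if and only if the cusped space $X(G,\mathcal{G})$---obtained from the Cayley graph of $G$ by attaching combinatorial horoballs to each coset of a peripheral subgroup---is Gromov hyperbolic, with $G$ acting properly on the complement of the open horoballs and cofinitely on $X$.

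For part (1), I would construct a combined cusped space as a tree of spaces. Let $T$ be the Bass--Serre tree of the graph-of-groups decomposition; to each vertex $v \in T$ associate the cusped space $X_v$ of the corresponding vertex group, and to each edge $e$ associate the cusped space $X_e$ of the edge group. Since edge groups are fully quasi-convex in the adjacent vertex groups, each $X_e$ embeds as a quasi-convex subspace of each adjacent $X_v$, and we glue along these embeddings to form a space $X$ on which $\Gamma$ acts properly outside horoballs. The heart of the argument is that $X$ is Gromov hyperbolic, which I would prove by adapting the Bestvina--Feighn combination theorem: acylindricity of the action on $T$ plays the role of their annulus-flare condition, guaranteeing exponential divergence of quasigeodesics that travel a long distance in $T$. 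Once hyperbolicity is established, the horoballs of $X$ are precisely the $\Gamma$-translates of the horoballs of the $X_v$, which correspond to $\Gamma$-conjugates of maximal parabolic subgroups of the vertex groups, giving exactly the peripheral structure $\mathcal{G}$.

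Parts (2), (3), and (4) are then special cases of (or easy consequences of) a mild generalization of part (1), in which one vertex group may only be relatively hyperbolic relative to the edge group itself. For (2), the one-edge amalgam $A *_P G$ is set up with $P$ parabolic in $G$ and with $A$ treated as hyperbolic relative to $\{A\}$ (its cusped space being the Cayley graph together with a horoball on $A$); the edge group $P$ is fully quasi-convex in both factors, and acylindricity comes from the malnormality of the maximal parabolic $P$ in $G$ relative to the other parabolic cosets. The resulting peripheral structure is then exactly $\mathcal{H}\cup\mathcal{A}$, as claimed. Parts (3) and (4) reduce to the same picture: in each case the edge group is parabolic on both sides (after identification), and acylindricity follows from maximality of the parabolics (in (3)) or from non-conjugacy of the identified parabolics (in the HNN case (4)).

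The main obstacle will be the hyperbolicity of the combined space $X$, and in particular verifying the fineness condition (equivalently, Farb's bounded coset penetration) in the combined group. This is where the acylindricity hypothesis is essential: one must show that any quasigeodesic in $X$ that tracks the Bass--Serre tree $T$ for many edges exhibits the exponential divergence needed for hyperbolicity, and that penetrations into combined horoballs satisfy the BCP property relative to the new peripheral structure. Everything else---tracking which subgroups become parabolic and verifying the cocompactness of the action on $X$ minus open horoballs---is bookkeeping, once the geometric core of the combination is in place.
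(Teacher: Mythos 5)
This statement is not proved in the paper at all: it is quoted verbatim as \cite[Theorem 0.1]{Dahmani03}, so there is no internal argument to compare yours against. What you have written is a sketch of a new proof of Dahmani's combination theorem, and it follows a genuinely different route from Dahmani's: he constructs a compact boundary for $\Gamma$ by gluing the Bowditch boundaries of the vertex groups along the limit sets of the edge groups over the Bass--Serre tree, and verifies that $\Gamma$ acts on the result as a geometrically finite convergence group. Your cusped-space/flaring strategy is closer in spirit to the later combination theorem of Mj--Reeves \cite{MjReeves}. That route is viable, but be aware that the ``main obstacle'' you defer --- deducing hyperbolicity of the glued cusped space from acylindricity plus full quasiconvexity of the edge groups --- is essentially the entire content of part (1), not a technical afterthought; a Bestvina--Feighn flare condition for hallways that enter and leave horoballs does not follow formally from acylindricity of the tree action and needs a real argument.

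There is also a concrete gap in your reduction of parts (2)--(4) to (a mild generalization of) part (1). In part (2) the edge group $P$ is a maximal parabolic of $G$, but on the $A$-side it is merely a subgroup, possibly of infinite index and not even malnormal (e.g.\ $P=\mathbb{Z}\le A=\mathbb{Z}^2$), and $A$ carries no geometry beyond finite generation; if you declare $A$ peripheral in itself, then $P$ sits inside the horoball over $A$ and is \emph{not} fully quasi-convex there, so the gluing in (2)--(4) is along horoball-penetrating sets rather than quasiconvex subspaces and the mechanism of (1) does not apply. This is precisely why Dahmani states the fully-quasiconvex case and the parabolic-amalgamation cases as separate assertions with separate proofs. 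To repair your outline you would need either a second gluing argument adapted to edge groups that are parabolic on at least one side (showing that the horoball over a coset of $A$ in the new cusped space absorbs all of $A$'s unknown geometry), or an appeal to an existing parabolic combination statement; as written, ``special cases of (1)'' is not correct.
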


The following converse to the combination theorem will also be useful. 

\begin{proposition}\label{p:Vertex groups RH} \cite[Proposition 3.4]{GuiLev:Splittings15}
Suppose that $G$ is a toral relatively hyperbolic group and that $\Lambda$ is a graph of groups decomposition in which the edge groups are abelian and all noncyclic abelian subgroups are elliptic.

Then the vertex groups of $\Lambda$ are relatively quasiconvex, and hence toral relatively hyperbolic.
\end{proposition}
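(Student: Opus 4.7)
The plan is to verify relative quasiconvexity of the vertex groups first, and then invoke the fact that relatively quasiconvex subgroups of toral relatively hyperbolic groups are themselves toral relatively hyperbolic (this is a standard consequence of Hruska's characterizations cited just before Definition \ref{d:TRH}). The final clause of the proposition is therefore the easy part; the content lies in proving relative quasiconvexity.

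First I would check that every edge group $\Lambda_e$ is relatively quasiconvex in $G$. Since $G$ is toral relatively hyperbolic, every abelian subgroup is either cyclic or conjugate into a (unique) maximal noncyclic abelian subgroup, which is by definition one of the parabolic subgroups. In the cyclic case the edge group is generated by a loxodromic element and is quasiconvex in the usual hyperbolic sense; in the noncyclic case the edge group is a subgroup of a parabolic, hence relatively quasiconvex by definition. The hypothesis that every noncyclic abelian is elliptic in $\Lambda$ is essential here: it means that each maximal parabolic $P$ of $G$ fixes a vertex of the Bass--Serre tree $T$, so up to conjugacy $P$ sits inside some vertex group $\Lambda_v$.

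Next I would show the action of $G$ on $T$ is acylindrical. Because $G$ is toral, and hence CSA, the stabilizer of a segment of length $>2$ in $T$ must be trivial: any element fixing such a segment lies in the intersection of two abelian edge groups meeting at a nonabelian vertex, and CSA together with malnormality of maximal abelian subgroups forces this intersection to be trivial. (If both endpoint vertex groups are abelian, their intersection along the cyclic edge group is again controlled by CSA.) Acylindricity plus relative quasiconvexity of edge groups puts us in the setting of Bowditch's/Dahmani's combination and decomposition machinery: in an acylindrical splitting of a relatively hyperbolic group with relatively quasiconvex edge groups, each vertex group acts geometrically on a quasiconvex subtree-neighbourhood in the coned-off Cayley graph, which translates into relative quasiconvexity of the vertex groups with respect to the induced peripheral structure. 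The induced peripheral structure on a vertex group $\Lambda_v$ is exactly given by the maximal noncyclic abelians it contains, because of the ellipticity hypothesis.

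The main obstacle I would expect is the acylindricity argument and the precise form of the quasiconvex-decomposition theorem used; the converse to Theorem \ref{t:DahComb} is not usually packaged as cleanly as the combination direction. In practice I would cite \cite{GuiLev:Splittings15} (exactly as the paper does) for this step, rather than reprove it from scratch, since the extraction of relative quasiconvexity from an acylindrical splitting with quasiconvex edges is precisely what Guirardel--Levitt establish. Once relative quasiconvexity of each $\Lambda_v$ is in hand, toral relative hyperbolicity follows because $G$ is torsion-free (so $\Lambda_v$ is torsion-free) and the induced peripheral subgroups of $\Lambda_v$ are intersections with abelian parabolics of $G$, hence abelian.
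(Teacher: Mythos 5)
The paper offers no proof of Proposition \ref{p:Vertex groups RH} beyond the citation to \cite[Proposition 3.4]{GuiLev:Splittings15}, and since you ultimately defer to that same citation for the decisive step, your bottom line agrees with the paper's. Your surrounding observations are correct and standard: abelian subgroups of a toral relatively hyperbolic group are either cyclic and generated by a loxodromic element or conjugate into a parabolic, so the edge groups are relatively quasiconvex; and a relatively quasiconvex subgroup of a torsion-free group with abelian induced peripheral structure is toral relatively hyperbolic.

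The intermediate acylindricity step in your sketch is genuinely wrong as stated, however, and is worth flagging. You claim CSA forces the stabilizer of a segment of length $>2$ to be trivial because two abelian edge groups meeting at a nonabelian vertex must intersect trivially. But the hypotheses of the proposition do not require edge groups to be maximal abelian (or root-closed) in the adjacent vertex groups: two distinct edges at a nonabelian vertex can have stabilizers contained in a single maximal abelian subgroup of that vertex group, in which case their intersection is infinite. CSA only tells you that two abelian subgroups with infinite intersection lie in a common maximal abelian subgroup; it does not make the intersection trivial. Acylindricity is precisely what one gains by passing to the tree of cylinders, which is why the canonical primary JSJ of Proposition \ref{p:JSJ props} is arranged to be $2$--acylindrical, whereas a general $\Lambda$ satisfying only the stated hypotheses need not be acylindrical at all. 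Guirardel--Levitt's argument does not route through acylindricity; it works directly from the ellipticity of the parabolics and the relative quasiconvexity of the (elementary) edge stabilizers. So citing \cite{GuiLev:Splittings15} is the right move, but the sketch preceding the citation would not assemble into a proof.
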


\subsection{Strong accessibility} \label{ss:SA}

Having decomposed a (relatively hyperbolic) group $G$ into the freely indecomposable factors $G_i$ of its Grushko decomposition, and having computed the JSJ decompositions $\Delta_i$ of those factors, one has in a sense reduced the study of $G$ to the study of the rigid vertex groups $G_{i,j}$ of the $\Delta_i$.  It is now natural to apply this procedure again to each $G_{i,j}$, and to continue recursively.  The assertion that this process terminates is called \emph{strong accessibility}. 

We now make this precise.  To a toral relatively hyperbolic group $G$ and a family of subgroups $\mathcal{P}$, we associate a tree $\mathcal{T}_{G,\mathcal{P}}$ of subgroups as follows.  

\begin{enumerate}
\item We start by labelling a node by $(G,\mathcal{P})$.
\item For a node $v$ labelled by a relatively freely decomposable pair $(H,\mathcal{Q})$, we compute the Grushko decomposition $H_1*\ldots*H_k*\mathbb{F}$ of $H$ (with the $H_i$ well defined subgroups up to conjugacy) relative to $\mathcal{Q}$; if $k=1$ and $\mathbb{F} = \{ 1 \}$ we proceed to the next step; otherwise, we attach $k$ edges to $v$, and label the other ends of the edges by the $(H_i,\mathcal{Q}_i)$, where $\mathcal{Q}_i$ consists of all the elements of $\mathcal{Q}$ contained in $H_i$.
\item For a node $v$ labelled by a relatively freely indecomposable subgroup $(H,\mathcal{Q})$, we compute the primary JSJ decomposition $\Delta$ of $H$ relative to $\mathcal{Q}$.
\begin{enumerate}
\item If $\Delta$ is trivial, then add no edges to $v$.
\item If $\Delta$ is non-trivial, and $H_1,\ldots,H_k$ are the rigid vertices of $\Delta$ (again, well defined up to conjugacy), then attach $k$ edges to $v$, and label the other ends of the edges by the $(H_i,\mathcal{Q}_i)$, where $\mathcal{Q}_i$ consists of all the elements of $\mathcal{Q}$ contained in $H_i$.
\end{enumerate}
\item Apply steps 2 and 3 to any remaining nodes.
\end{enumerate}

The Strong Accessibility theorem \cite[Theorem 2.5]{LouderTouikan}  now asserts that this construction terminates after finitely many steps.

\begin{theorem}[Strong Accessibility, Louder--Touikan]\label{t: Strong accessibility}
If $G$ is a toral relatively hyperbolic group and $\mathcal{P}$ is any family of subgroups then the tree $\mathcal{T}_{G,\mathcal{P}}$ is finite.
\end{theorem}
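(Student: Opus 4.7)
The plan is to prove that every descending path in $\mathcal{T}_{G,\mathcal{P}}$ has bounded length, and to combine this with local finiteness of the tree via König's lemma. Local finiteness is essentially automatic: a relative Grushko decomposition has finitely many factors by standard relative accessibility of finitely generated groups, and a primary JSJ decomposition of a toral relatively hyperbolic group has finitely many vertex orbits by the general theory (cf.\ \cite{GuiLevJSJ:Monster}). Moreover, by Proposition \ref{p:Vertex groups RH} the rigid vertex groups of such a JSJ decomposition are themselves toral relatively hyperbolic, and Grushko factors trivially inherit toral relative hyperbolicity, so the construction stays within the class of toral relatively hyperbolic pairs and the process is well-defined at every node.

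To control the length of descending paths, I would attach to each pair $(H,\mathcal{Q})$ a complexity $c(H,\mathcal{Q})$ taking values in a well-ordered set, and then verify that $c(H_i,\mathcal{Q}_i)<c(H,\mathcal{Q})$ for each child $(H_i,\mathcal{Q}_i)$. In the spirit of Delzant--Potyagailo's strong accessibility for hyperbolic groups, a natural candidate is a lexicographically ordered tuple recording, in order of decreasing importance: (i) an invariant measuring relative free-factor complexity (for instance the relative Kurosh rank), which drops at each Grushko step by standard relative accessibility; (ii) a Euler-characteristic-like number measuring the total topological complexity of the primary JSJ, summing contributions from the socket, abelian and rigid vertex groups, which records the JSJ step; and (iii) auxiliary quantities controlling the interaction of $H$ with its peripheral parabolic structure and with $\mathcal{Q}$.

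The main obstacle is verifying the strict decrease at the JSJ step, in which we pass from a freely indecomposable pair $(H,\mathcal{Q})$ to the rigid vertices of its primary JSJ. The rigid vertices need not have smaller rank, and only the rigid vertices become children: the socket and abelian vertex groups are entirely discarded. The key input is that rigid vertices of the relative primary JSJ admit no further nontrivial primary splittings relative to the induced collection of incident edge groups and parabolic subgroups; this is precisely JSJ rigidity as recorded in Proposition \ref{p:JSJ props}. One must therefore arrange the complexity so that the socket and abelian contributions, though not appearing among the children, are absorbed into $c(H,\mathcal{Q})$ in a way that forces a strict drop. This is precisely the technical heart of Louder--Touikan's argument, which adapts a Scott-type complexity to the one-ended relatively hyperbolic setting; the adaptation requires careful bookkeeping of how the peripheral structure $\mathcal{Q}$ and the maximal parabolic subgroups of $H$ distribute across the vertex groups of the JSJ, so that passing to a rigid vertex truly reduces the invariant.
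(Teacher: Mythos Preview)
The paper does not prove this theorem: it is stated as a quotation of \cite[Theorem 2.5]{LouderTouikan} and no argument is given. So there is no ``paper's own proof'' to compare against; the authors simply invoke Louder--Touikan's result as a black box.

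Your proposal is a reasonable high-level sketch of how a strong accessibility argument is organized (local finiteness plus a well-ordered complexity that drops at each step, combined via K\"onig's lemma), and you correctly identify that the JSJ step is the crux. However, what you have written is not a proof: at the decisive point you explicitly defer to Louder--Touikan's work (``This is precisely the technical heart of Louder--Touikan's argument\ldots''), and the complexity you describe is only a ``candidate'' whose strict decrease you do not verify. In particular, the claim that a Euler-characteristic-like invariant drops when passing from a one-ended toral relatively hyperbolic group to a rigid vertex of its primary JSJ is exactly the nontrivial content of \cite{LouderTouikan}, and your outline does not supply that verification. So as a submission this is an annotated citation rather than an independent proof; if the intention is merely to cite the result, as the paper does, then no proof is needed at all.
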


We apply this to deduce that any subgroup of a toral relatively hyperbolic group has a \emph{quasi-convex enclosure}.  Let $H$ be a non-abelian subgroup of a relatively hyperbolic $G$ and $\mathcal{D}$ a finite collection of subgroups of $H$. We suppose that $H$ does not admit a { nontrivial primary} splitting relative to $\mathcal{D}$.   We may now use the hierarchy $\mathcal{T}_{G,\mathcal{D}}$ to define a sequence of subgroups
\[
H\subseteq G_n\subseteq G_{n-1}\subseteq \ldots \subseteq G_0=G
\]
and, for each $i$, an $G_i$--tree $T_i$.  Indeed, we set $v_n$ to be the unique leaf of $\mathcal{T}_{G,\mathcal{D}}$ that contains (a conjugate of) $H$ and let the embedded path from $v_n$ to the root of $\mathcal{T}_{G,\mathcal{D}}$ be $v_n,\ldots,v_0$.  For each $i$, let $G_i$ be such that the label of $v_i$ is $(G_i,\mathcal{D})$ (where if necessary we replace $G_i$ by a suitable conjugate so that $H\subseteq G_i$) and let $T_i$ be the Bass--Serre tree of the splitting of $G_i$ given in the definition of $\mathcal{T}_{G,\mathcal{D}}$ (taking $T_n$ to be a point).  Note that each $T_i$ has a unique vertex $u_i$ stabilized by $H$, and $G_{i+1}$ is the stabilizer of $u_i$.

\begin{definition} \label{d:Enclose}
With the above notation, the \emph{quasiconvex enclosure} of $H$ (relative to $\mc{D}$) is (with a slight abuse of notation) denoted by $\overline{H}$ and defined to be the subgroup $G_n$.
\end{definition}

While this paper was in preparation, the authors discovered that a similar definition is made in \cite[$\S\S 3.4$]{kharlampovich_decidability_2013}, where a ``quasi-convex closure'' is defined similarly, although that definition is not relative to a set of subgroups $\mc{D}$.   The application of the definition in \cite{kharlampovich_decidability_2013} is similar in spirit to the one in this paper, although, as we explain in $\S$\ref{s:KM} above, considerable care is needed to ensure that the groups constructed really do have the good geometric features one would like, and also that simultaneously one does not lose control over the sets of homomorphisms one considers.

\begin{corollary}  \label{c:Enclose}
Suppose that $G$ is a toral relatively hyperbolic group, that $H$ is a finitely generated non-abelian subgroup, and $\mc{D}$ a finite collection of subgroups of $H$.  Suppose that $H$ admits no abelian splitting relative to $\mathcal{D}$.  Then:
\begin{enumerate}
\item the quasiconvex enclosure $\overline{H}$ is the maximal subgroup of $G$ that contains $H$ and does not admit any essential abelian splitting relative to $\mc{D}$; and
\item $\overline{H}$ is relatively quasiconvex (and hence itself toral relatively hyperbolic).
\end{enumerate}
\end{corollary}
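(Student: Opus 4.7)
The plan is to prove both statements by induction along the chain
\[
\overline{H} = G_n \subseteq G_{n-1} \subseteq \cdots \subseteq G_0 = G
\]
supplied by Definition \ref{d:Enclose}. Recall that at each step of the hierarchy $\mc{T}_{G,\mc{D}}$ the subgroup $G_{i+1}$ is either a Grushko factor of $G_i$ relative to $\mc{D}$, or the stabilizer of a rigid vertex $u_i$ of the Bass--Serre tree $T_i$ of the primary JSJ decomposition of $G_i$ relative to $\mc{D}$; in either case $H \subseteq G_{i+1}$, and in the JSJ case $H$ fixes $u_i$ by construction.

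For part (2), I would show by induction on $i$ that each $G_{i+1}$ is relatively quasiconvex in $G_i$, using the standard fact about free factors of torsion-free relatively hyperbolic groups in the Grushko case and Proposition \ref{p:Vertex groups RH} in the JSJ case. The hypotheses of Proposition \ref{p:Vertex groups RH} are met because the primary JSJ has abelian edge groups and every noncyclic abelian subgroup is elliptic by definition. Since relative quasiconvexity is transitive and $G$ is toral relatively hyperbolic, this yields that $\overline{H}$ is relatively quasiconvex, hence itself toral relatively hyperbolic by the standard theory collected in \cite{HruskaQC}.

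For part (1), I would first check that $\overline{H}$ admits no essential abelian splitting relative to $\mc{D}$. As a leaf of $\mc{T}_{G,\mc{D}}$, the group $\overline{H}$ is freely indecomposable and has trivial primary JSJ relative to $\mc{D}$. In a toral relatively hyperbolic group, every noncyclic abelian subgroup is contained in a maximal parabolic, and a short ellipticity argument shows that any such parabolic must fix a vertex in an essential abelian tree; hence every essential abelian splitting of $\overline{H}$ is automatically primary, and no nontrivial one exists relative to $\mc{D}$. For maximality, let $K$ be any subgroup with $H \subseteq K \subseteq G$ admitting no essential abelian splitting relative to $\mc{D}$, and I would show by induction on $i$ that $K \subseteq G_i$. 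Given $K \subseteq G_i$, consider the action of $K$ on $T_i$: the induced splitting has edge groups of the form $K \cap gEg^{-1}$ that inherit root-closure (or triviality) from $T_i$, and elements of $\mc{D}$ are elliptic in $T_i$ since the splitting is taken relative to $\mc{D}$. The hypothesis on $K$ therefore forces this induced splitting to be trivial, so $K$ fixes some vertex $v$. Since $H \subseteq K$ fixes both $u_i$ and $v$, is nonabelian, and edge stabilizers of $T_i$ are abelian or trivial, the path from $v$ to $u_i$ must have length zero; thus $v = u_i$ and $K \subseteq G_{i+1}$.

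The main obstacle is the assertion that every essential abelian splitting of a toral relatively hyperbolic group is primary. This hinges on showing that a noncyclic abelian (hence parabolic) subgroup cannot act without a global fixed point on a tree whose edge stabilizers are root-closed abelian, since such an action would force the parabolic to have a translation axis whose line stabilizer sits awkwardly with respect to root-closure and the maximal abelianness of parabolics. The remainder of the argument is a routine induction on the hierarchy, with the non-abelianness of $H$ providing exactly the rigidity needed to pin down the vertex fixed by $K$ at each stage.
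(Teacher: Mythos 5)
Your overall strategy matches the paper's (very terse) proof: part (2) goes by induction up the hierarchy via Proposition \ref{p:Vertex groups RH} together with transitivity of relative quasiconvexity, and part (1) combines a splitting-freeness statement for the leaf with an ellipticity induction that pins any larger splitting-free subgroup $K$ to the vertex $u_i$ fixed by the non-abelian group $H$. Part (2) and the maximality induction are correct as you have written them.

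The gap is in your justification that $\overline{H}$ admits no essential abelian splitting relative to $\mc{D}$. You assert that a maximal parabolic (i.e.\ maximal noncyclic abelian) subgroup of a toral relatively hyperbolic group must fix a vertex in every essential abelian tree, so that every essential splitting is automatically primary. This is false. Take $G=\langle a,b,d\mid [b,d]=1\rangle\cong F(a,b)*_{\langle b\rangle}\Z^2$, a toral relatively hyperbolic group with maximal parabolic $M=\langle b,d\rangle\cong\Z^2$. The HNN splitting of $G$ with vertex group $F(a,b)$, edge group $\langle b\rangle$ and stable letter $d$ is essential ($\langle b\rangle$ is root-closed in $G$ by commutative transitivity), yet $d\in M$ translates along an axis in the Bass--Serre tree, so $M$ is hyperbolic and the splitting is not primary. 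The ellipticity argument you sketch cannot be repaired: root-closure of the edge groups is perfectly compatible with a parabolic preserving a line. What you actually need is the weaker bridge: a freely indecomposable (rel $\mc{D}$) toral relatively hyperbolic group admitting a nontrivial essential abelian splitting rel $\mc{D}$ also admits a nontrivial \emph{primary} splitting rel $\mc{D}$. This is the standard re-splitting move: if a maximal abelian $A$ is hyperbolic in an essential tree with edge group $E$, its elliptic kernel $A_0=A\cap E$ is root-closed (root-closure of $E$ plus CSA), and one re-splits over $A_0$ so that $A$ becomes a vertex group, iterating (or passing to the tree of cylinders) until all noncyclic abelian subgroups are elliptic. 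With that lemma replacing your claim, the leaf $G_n$, which by construction is freely indecomposable rel $\mc{D}$ with trivial primary JSJ rel $\mc{D}$, indeed admits no essential splitting rel $\mc{D}$, and the remainder of your argument goes through.
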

\begin{proof}
The first assertion is immediate from the definition.  The second follows from Proposition \ref{p:Vertex groups RH} and induction.
\end{proof}

\subsection{Basics on algorithms with toral relatively hyperbolic groups}

\begin{proposition} \cite[Theorem 0.2]{Dah-FindRH}
There is a one-sided algorithm which takes as input finite presentations $\mc{P}$ and terminates if and only if the group defined by $\mc{P}$ is toral relatively hyperbolic.  In case it is toral relatively hyperbolic, the algorithm provides a basis for each peripheral subgroup.
\end{proposition}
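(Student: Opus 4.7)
The plan is to reduce the verification of toral relative hyperbolicity to a semi-decidable characterisation and then enumerate candidate peripheral structures until one is certified. The key input is a result of Osin (and Drutu--Sapir) asserting that a finitely presented group $G$ is hyperbolic relative to a finite collection of finitely generated subgroups $\mc{P}$ if and only if the pair $(G,\mc{P})$ admits a linear relative Dehn function. This latter condition is semi-decidable: given a candidate $\mc{P}$, one can in parallel enumerate constants $K$ and relative van Kampen diagrams, halting as soon as a linear relative isoperimetric certificate is found.

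Next, I would enumerate, in a fair diagonal fashion, all finite tuples of tuples of words $\mc{P} = (P_1,\ldots,P_m)$ in the generators of $\mc{P}$, interpreting each $P_i$ as a candidate generating set for a peripheral subgroup. For each candidate $\mc{P}$, run three procedures in parallel: (a) search for van Kampen diagrams in $\mc{P}$ witnessing that the generators of each $P_i$ commute pairwise (certifying that $P_i$ is abelian); (b) search for a linear relative isoperimetric certificate for $(G,\mc{P})$, using Osin's characterisation; and (c) once (a) and (b) succeed, invoke the algorithms of \cite{dahmanigroves1,dahmaniguirardel:iso} available for toral relatively hyperbolic groups to compute a \emph{basis} for each abelian subgroup $P_i$ and to verify that $G$ is torsion-free (which, in the presence of the other conditions, amounts to checking that the $P_i$ are free abelian of the claimed rank).

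The principal obstacle is making sure that the peripheral structure produced is the canonical one, namely a set of representatives for the conjugacy classes of \emph{maximal} noncyclic abelian subgroups, rather than some smaller abelian collection. I would handle this by a saturation step: whenever (a) and (b) succeed, apply the algorithmic solutions to the word, conjugacy, and membership problems for toral relatively hyperbolic groups to replace each $P_i$ by its maximal abelian overgroup, and to detect any remaining noncyclic abelian conjugacy classes that must be adjoined; this process terminates because a toral relatively hyperbolic group has only finitely many conjugacy classes of maximal noncyclic abelian subgroups.

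Finally, to see that the algorithm terminates precisely on toral relatively hyperbolic inputs, note that termination on a non-toral-relatively-hyperbolic group would certify hyperbolicity relative to some abelian collection, contradicting the hypothesis, whereas on a genuine toral relatively hyperbolic $G$ the correct peripheral collection appears at some finite stage of the enumeration and is then verified by completeness of Osin's characterisation together with the semi-decidability of each of (a), (b), and (c). The basis for each peripheral subgroup is then output as part of step (c).
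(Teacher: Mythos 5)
First, note that the paper does not prove this proposition at all: it is quoted directly from Dahmani \cite{Dah-FindRH}, so the intended ``proof'' here is simply the citation. Your sketch is nonetheless in the spirit of Dahmani's actual argument --- enumerate candidate abelian peripheral collections and, for each, search for a finite certificate of relative hyperbolicity --- but two of your steps have genuine gaps. The first is in step (b): enumerating constants $K$ and relative van Kampen diagrams can only ever \emph{refute} a linear relative isoperimetric inequality, never confirm it, since the inequality is a universally quantified statement over infinitely many diagrams. To turn a finite search into a certificate one needs a local-to-global principle in the style of Gromov--Papasoglu (a suitably controlled relative isoperimetric bound on diagrams of computably bounded size already implies relative hyperbolicity); supplying such a principle is the technical heart of \cite{Dah-FindRH} and cannot be waved through as ``semi-decidable by enumeration.''

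The second gap is the torsion check in step (c). It is false that, in the presence of relative hyperbolicity with abelian peripherals, torsion-freeness of the group reduces to the peripherals being free abelian: the group $(\Z/2\Z)\ast\Z^2$ is hyperbolic relative to its free abelian factor, which is free abelian of the expected rank, and yet the group has torsion. One must separately certify that there are no finite-order elements outside the peripherals; this is possible because, once the relative hyperbolicity constants are in hand, there are only finitely many conjugacy classes of such elements, represented by words of computably bounded length and order, which can then be tested using the word problem. Finally, your ``saturation step'' is both slightly circular (it invokes algorithms that presuppose the conclusion being verified) and unnecessary: an infinite abelian peripheral subgroup of a relatively hyperbolic group is automatically maximal abelian, by almost-malnormality of peripherals, and every noncyclic abelian subgroup is conjugate into a peripheral, so the noncyclic members of any verified abelian collection already constitute the canonical peripheral structure.
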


Suppose that we are given a finite presentation of a toral relatively hyperbolic group $\Upsilon$, which is witnessed to be such by the above algorithm.  Then we have an explicit solution to the word problem for $\Upsilon$.   One way to see this is \cite[Theorem 0.1]{dahmani09}, which proves that the existential theory of $\Upsilon$ is decidable.

The next theorem is not immediately implied by the statement of  \cite[Theorem 3.5]{GrovesWilton09}.  However the proof there applies verbatim to prove the following stronger result.
\begin{theorem} [cf. Theorem 3.5, \cite{GrovesWilton09}]\label{t:Compute Z(g)}
There exists an algorithm that, given as input a presentation for a toral relatively hyperbolic group $\Gamma$, and an element $\gamma \in \Gamma$, outputs a minimal set of generators for $Z_\Gamma(\gamma)$.
\end{theorem}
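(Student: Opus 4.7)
The plan is to exploit the fact that $\Gamma$ is torsion-free and CSA: for any $\gamma \neq 1$, the centralizer $Z_\Gamma(\gamma)$ is the unique maximal abelian subgroup of $\Gamma$ containing $\gamma$, and in a toral relatively hyperbolic group such a subgroup is either a conjugate of one of the peripheral subgroups $P_1, \ldots, P_k$ (the parabolic case) or a maximal infinite cyclic subgroup (the hyperbolic case). The algorithm therefore splits into three cases according to whether $\gamma$ is trivial, parabolic, or hyperbolic. If $\gamma = 1$, we simply return the presentation generators.

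To distinguish the remaining cases, I would appeal to the decidability of the existential theory of $\Gamma$ from \cite[Theorem 0.1]{dahmani09}, which provides a solution to the word problem and allows us to decide, for each peripheral subgroup $P_i$ (whose basis is supplied by \cite{Dah-FindRH}), whether $\gamma$ is conjugate into $P_i$. In the parabolic case, a conjugator $g$ can then be located by enumerating the elements of $\Gamma$ and testing each candidate via the word problem; the algorithm returns the set $\{ g^{-1} b_1 g, \ldots, g^{-1} b_r g \}$, where $b_1, \ldots, b_r$ is the chosen basis of $P_i$. This is a minimal generating set because the peripheral subgroups of a toral relatively hyperbolic group are finitely generated free abelian.

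The substantive case is when $\gamma$ is hyperbolic, so $Z_\Gamma(\gamma) = \langle h \rangle$ is infinite cyclic, generated by the unique maximal root $h$ of $\gamma$. The algorithm enumerates candidate elements in order of word length, and for each candidate $h$ it checks whether $h^n = \gamma$ for some $n \geq 1$ using the word problem, returning the candidate realising the largest such $n$. The main obstacle is termination: one needs an explicit a priori upper bound on the word length of $h$ (equivalently, on $n$) in terms of the word length of $\gamma$. This bound is precisely what is established in the proof of \cite[Theorem 3.5]{GrovesWilton09}, which combines translation-length estimates in the cusped space with a uniform positive lower bound on the translation length of any hyperbolic isometry in a toral relatively hyperbolic group; the ratio of these two quantities bounds $n$. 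Since that argument uses only the toral relatively hyperbolic structure of the ambient group, rather than any limit-group-over-a-free-group hypothesis, it applies verbatim in the present setting, as the statement asserts.
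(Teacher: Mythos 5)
Your proposal is correct and is essentially the paper's own argument: the paper offers no proof beyond the assertion that the proof of \cite[Theorem 3.5]{GrovesWilton09} applies verbatim in the toral relatively hyperbolic setting, and your case division (trivial/parabolic/hyperbolic via the CSA property and the canonical peripheral structure, with the translation-length bound on root degrees supplying termination in the hyperbolic case) is exactly that argument. Two cosmetic points: testing whether $g^{-1}\gamma g$ lies in $P_i$ is a membership problem, handled by enumerating pairs $(g,w)$ with $w$ a word in the basis of $P_i$ rather than by the word problem alone (this terminates because parabolicity has already been certified), and the case $\gamma=1$ is implicitly excluded by the statement, since the remark following it treats $Z_\Gamma(\gamma)$ as free abelian.
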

We remark that in case $\Gamma$ is toral relatively hyperbolic, $Z_\Gamma(\gamma)$ is a finitely generated free abelian group.  Therefore the cardinality of a minimal generating set determines the group up to isomorphism.  In particular, it tells us the rank and a presentation can be easily (and algorithmically) written down for $Z_\Gamma(\gamma)$.

\section{Model $\Gamma$--limit groups} \label{s:Defn models}
In this section we make the required definitions of model $\Gamma$--limit groups, which are the central object of concern in this paper.

We start by introducing the main category we work with.  Recall the definition of a {\em non-degenerate} map from Definition \ref{d:degenerate}.

\begin{definition} \label{d:Cat CSA}
The category $\catCSA$ is defined as follows:

$\bullet$  The objects of $\catCSA$ are pairs $(G,\mc{P})$, where $G$ is a torsion-free CSA group and $\mc{P}$ is a finite collection of non-trivial abelian subgroups of $G$.  Denote the set of objects by $\Obj(\catCSA)$.

$\bullet$ The morphisms from $(G,\mc{P})$ to $(H,\mc{Q})$ are the homomorphisms  $\phi \co G \to H$ that are non-degenerate with respect to $\mc{P}$ and such that, for each $P\in\mc{P}$, $\phi(P)$ is conjugate into an element of $\mc{Q}$.  If $X_1, X_2 \in \Obj(\catCSA)$ then $\Mor_{\catCSA}(X_1,X_2)$ denotes the set of morphisms between $X_1$ and $X_2$.
\end{definition}

\begin{lemma}
$\catCSA$ is a category.
\end{lemma}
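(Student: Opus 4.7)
The plan is to verify the category axioms one at a time; the only content is showing closure under composition.

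First, I would check that identity morphisms exist: for each $(G,\mc{P})\in\Obj(\catCSA)$, the identity map $\mathrm{id}_G$ has trivial kernel, so it is non-degenerate with respect to $\mc{P}$, and each $P\in\mc{P}$ is trivially conjugate into itself. Associativity of composition is immediate from associativity of composition of set maps.

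The substantive step is closure of $\Mor_{\catCSA}$ under composition. Given $\phi\co (G,\mc{P})\to (H,\mc{Q})$ and $\psi\co (H,\mc{Q})\to (K,\mc{R})$, I need to show that $\psi\circ\phi$ is a morphism from $(G,\mc{P})$ to $(K,\mc{R})$, i.e., verify both (a) non-degeneracy with respect to $\mc{P}$ and (b) that $\psi(\phi(P))$ is conjugate into some $R\in\mc{R}$ for each $P\in\mc{P}$. For (a), fix $P\in\mc{P}$. By hypothesis $\phi|_P$ is injective and $\phi(P)\le hQh^{-1}$ for some $h\in H$ and $Q\in\mc{Q}$. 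If $x\in \phi(P)\cap\ker(\psi)$, then $h^{-1}xh\in Q$ and $\psi(h^{-1}xh)=\psi(h)^{-1}\psi(x)\psi(h)=1$, so $h^{-1}xh\in Q\cap\ker(\psi)=\{1\}$ by non-degeneracy of $\psi$. Thus $\psi|_{\phi(P)}$ is injective, and combined with injectivity of $\phi|_P$ we get $P\cap\ker(\psi\circ\phi)=\{1\}$. For (b), continuing with the same notation, $\psi(Q)\le kRk^{-1}$ for some $k\in K$ and $R\in\mc{R}$, so
\[
\psi(\phi(P))\le \psi(h)\psi(Q)\psi(h)^{-1}\le \psi(h)kRk^{-1}\psi(h)^{-1},
\]
which realises $\psi(\phi(P))$ inside a conjugate of $R\in\mc{R}$, as required.

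No step is really an obstacle here — the content of the lemma is just bookkeeping — but the one place where care is needed is the conjugation argument in step (a): one must remember that the peripheral $Q$ containing a conjugate of $\phi(P)$ need not itself contain $\phi(P)$, so non-degeneracy of $\psi$ must be transferred to $hQh^{-1}$ by conjugating into $Q$ first. Once that is in place, the lemma follows.
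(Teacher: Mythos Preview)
Your proof is correct and follows the same approach as the paper, which also identifies closure under composition as the only nontrivial point. In fact you are more careful than the paper: the paper's one-line argument glosses over the conjugation issue you flag in step (a), simply asserting that distinguished subgroups map into distinguished subgroups and hence injectivity is preserved, whereas you correctly note that $\phi(P)$ lands only in a \emph{conjugate} of some $Q$ and explain how to transfer non-degeneracy across that conjugation.
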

\begin{proof}
The only thing that needs to be checked is that morphisms compose. They do, since the distinguished subgroups of one group map into the distinguished subgroups of another, so the composite homomorphism remains injective on distinguished subgroups.
\end{proof}

The construction of model $\Gamma$--limit groups we give in the next section relies on colimits in $\catCSA$.

In this section we give the definition of models in terms of a universal property, leaving open the question of existence.  In Section \ref{s:Gluing} we provide the construction of models.

Let $L_0$ be a $\Gamma$--limit group and let 
\[	L_0 \stackrel{\lambda_0}{\to} L_1 \stackrel{\lambda_1}{\to} \cdots \stackrel{\lambda_{n-1}}{\to} L_n	\]
be a strict resolution of $L_0$, where $L_n$ admits a (given) strict map $\lambda_n \co L_n \to \Gamma$.  Call this resolution (along with the map $\lambda_n$) $\res_0$.  For $i \ge 0$, let $\res_i$ denote the resolution obtained from $\res_0$ by starting at $L_i$, and finally appending the map $\lambda_n \co L_n \to \Gamma$. 

Below we define the {\em model $\Gamma$--limit groups of $L_i$ with respect to $\res_i$}, denoted $\model_{\res_i}$. The definition is made by (reverse) induction on $i$.  

Before we describe the construction, we note a few important properties of models, which will be proved throughout the subsequent sections.

\begin{theorem} \label{t:prelim properties of models} Let $\res_0$ be as above, and let $\mathcal{C}_i$ be the edge groups of the JSJ decomposition of the freely indecomposable factors of the Grushko decomposition of $L_i$. For $i \ge 0$, there exists a canonical group $\model_{\res_i}$, and these groups satisfy the following properties:
\ 
\begin{enumerate}
\item $\model_{\res_i}$ is a toral relatively hyperbolic $\Gamma$--limit group;
\item There is a strict map $\mu_i \co \model_{\res_i} \to \model_{\res_{i+1}}$;
\item There is a distinguished family ${\mc{D}_{\res_i}}$ of abelian subgroups of $\model_{\res_i}$ so that $(\model_{\res_i},\mc{D}_{\res_i}) \in \Obj(\catCSA)$;
\item There are injective maps $\eta_i \co L_i \to \model_{\res_i}$ so that $\mu_i \circ \eta_i = \eta_{i+1} \circ \lambda_i$;
\item The map $\eta_i$ takes $\mathcal{C}_i$ to $\mc{D}_{\res_i}$ and is non-degenerate with respect to $\mc{C}_i$.  In other words, $\eta_i \in \Mor(\catCSA)$.
\end{enumerate}
\end{theorem}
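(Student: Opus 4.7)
The plan is to build the family $\{\model_{\res_i}\}_{i=0}^n$ by reverse induction on $i$, using the quasi-convex enclosure machinery from Definition \ref{d:Enclose} and Corollary \ref{c:Enclose} as the main tool. At each stage, for each freely indecomposable factor $F$ of $L_i$ (viewed through its primary JSJ decomposition $\Delta$ from Proposition \ref{p:JSJ props}), I will enlarge each rigid vertex group $V$ of $\Delta$ to the quasi-convex enclosure $\overline{V}$ of the image of its envelope $E(V)$ inside the target group, taken relative to the incident edge groups. Then I re-assemble a new graph of groups with the same combinatorial shape as $\Delta$, but with the enlarged rigid vertices; the resulting fundamental groups, free-producted across the Grushko decomposition, form $\model_{\res_i}$, with the distinguished family $\mc{D}_{\res_i}$ being the images of the edge groups.

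For the base case $i=n$, the target is $\Gamma$ and the map is $\lambda_n$; strictness of $\lambda_n$ guarantees that $E(V)$ injects into $\Gamma$ and that $(E(V),\mathcal{D})$ admits no non-trivial primary splitting, so Corollary \ref{c:Enclose} supplies a relatively quasiconvex, hence toral relatively hyperbolic, enclosure $\overline{V}$ inside $\Gamma$. For the inductive step, assume $\model_{\res_{i+1}}$ is built with all its listed properties; then the composition $\eta_{i+1}\circ\lambda_i \co L_i \to \model_{\res_{i+1}}$ is again strict (strictness is closed under composition with an injection that preserves JSJ structure, which $\eta_{i+1}$ does by (4) and (5) at level $i+1$), so the same enclosure construction applies inside $\model_{\res_{i+1}}$. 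The map $\mu_i$ is then defined by sending each enclosed rigid vertex to itself inside $\model_{\res_{i+1}}$ and each abelian/socket vertex via $\eta_{i+1}\circ\lambda_i$; the map $\eta_i$ is the inclusion of the JSJ structure of $L_i$ into the new graph of groups, and the compatibility $\mu_i\circ\eta_i=\eta_{i+1}\circ\lambda_i$ holds vertex by vertex.

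The verifications are then largely modular. Property (1) follows from Dahmani's combination theorem (Theorem \ref{t:DahComb}): the vertex groups are toral relatively hyperbolic (the enclosures by Corollary \ref{c:Enclose}, the abelian vertices trivially); edge groups are maximal abelian in the adjacent non-abelian vertex groups by Proposition \ref{p:JSJ props}(3) and the root-closedness built into the enclosure; and these together give the acylindricity needed. Being a $\Gamma$--limit group then follows by concatenating the $\mu_i$ with $\lambda_n$ to give a strict resolution of $\model_{\res_i}$ and applying Theorem \ref{t:strict res implies limit group}. Strictness of $\mu_i$ (property (2)) is immediate on rigid envelopes (which now \emph{contain} the image) and inherited from strictness of $\lambda_i$ and $\eta_{i+1}$ on sockets and abelian vertices. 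Injectivity of $\eta_i$ (property (4)) follows because the JSJ of $L_i$ embeds as a subgraph-of-subgroups of the graph of groups defining $\model_{\res_i}$, with faithful inclusions on each vertex group. Properties (3) and (5) are built into the construction, since $\mc{D}_{\res_i}$ is by definition the collection of edge groups and $\eta_i$ sends the edge groups $\mc{C}_i$ to them isomorphically. The main obstacle I expect is ensuring that the enlarged graph of groups remains CSA and toral relatively hyperbolic: this is precisely the issue highlighted by the cautionary example of \S\ref{s:KM}, where a naive enclosure destroys CSA by gluing an edge group to one in which it has roots. The quasi-convex enclosure is engineered exactly to be root-closed relative to the incident edge groups, which is what makes the combination theorem applicable and keeps the construction inside $\catCSA$; carrying this through all vertex types and verifying the acylindricity hypothesis of Theorem \ref{t:DahComb} will be the most delicate step.
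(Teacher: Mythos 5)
Your overall architecture --- reverse induction, quasi-convex enclosures of the rigid pieces, reassembly into a graph of groups, Dahmani's combination theorem for relative hyperbolicity --- matches the paper's, but there are genuine gaps at exactly the points the construction is designed to navigate. First, a technical misstep: you enclose the image of the \emph{envelope} $E(V)$ relative to the incident edge groups, but $E(V)$ typically \emph{does} admit a nontrivial primary splitting relative to those edge groups (it splits as a star over them, with the peripheral subgroups of the adjacent abelian vertices as leaves), so the hypothesis of Definition \ref{d:Enclose}/Corollary \ref{c:Enclose} fails and your $\overline{V}$ is not defined; the correct object to enclose is the rigid vertex group itself. Second, and more seriously, you keep ``the same combinatorial shape as $\Delta$'' and glue along the original edge groups, asserting they remain maximal abelian by ``root-closedness built into the enclosure.'' That is false: the enclosure is maximal with respect to admitting no essential splitting relative to the edge groups, which says nothing about those edge groups staying root-closed or maximal abelian inside it. In Example \ref{ex:KM} the enclosure is all of $\Gamma$ and $u^k$ acquires a root there, so gluing along the original edge group destroys CSA --- your construction as written reproduces precisely the error of \cite{kharlampovich_decidability_2013} that \S\ref{s:KM} warns about. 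The paper avoids this by replacing each edge group with its \emph{centralizer} in the enclosure, identifying edges whose centralizers become conjugate (to preserve malnormality of the incident edge family, hence $2$--acylindricity), and replacing each abelian vertex group by the abelianization of an auxiliary graph of abelian groups $\Lambda_{[w]}$ (a colimit of the old vertex group with the new, larger edge groups). None of this is in your proposal, and without it neither CSA nor the hypotheses of Theorem \ref{t:DahComb} hold.

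Two further steps that you declare ``immediate'' or ``modular'' carry most of the paper's work. Injectivity of $\eta_i$ does not follow from having faithful inclusions on vertex groups of a graph-of-groups morphism: such a morphism can fold. The paper (Proposition \ref{p:Map is injection}) first modifies $\Delta$ to a decomposition $\Delta'$ by pulling envelope elements across edges, and then shows the induced map of Bass--Serre trees is a local injection, with a nontrivial quotient argument at the abelian vertices to verify $\eta^{-1}(\widehat{\Delta}_{[e]})\cap\Delta'_y=\Delta'_e$. Likewise, strictness of $\mu_i$ requires injectivity on the envelopes of the rigid vertices of the \emph{new} JSJ, which include the peripheral subgroups of the new (colimit) abelian vertex groups; this is proved in Proposition \ref{p: Mu is strict} by running the strong accessibility hierarchy on the image of the envelope and checking, level by level, that the induced tree maps do not fold. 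Finally, the theorem asserts a \emph{canonical} model, which in the paper means the colimit characterization of \S\ref{s:Defn models}; a construction alone does not establish this without verifying the universal property (Proposition \ref{p:Is a model}).
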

The claims of Theorem \ref{t:prelim properties of models} are contained in Theorems \ref{t:Model props} and \ref{t:Properties of models} below.

We now give the definition of models.
First suppose that $L_0$ is freely indecomposable.  

Suppose inductively that we have defined $\model_{\res_1}$, along with a homomorphism $\eta_1 \co L_1 \to \model_{\res_1}$.
Consider the primary JSJ decomposition $\Delta$ of $L_0$, and let $\{ \Delta_v \}$ be the collection of rigid vertex groups.  Let $\Delta_v$ be such a rigid vertex group.  Let $\mathcal C_v = \{ \Delta_{e_1}, \ldots , \Delta_{e_s} \}$ be the set of adjacent edge groups.  We remark that these edge groups are maximal abelian subgroups of $\Delta_v$.

We define the {\em relative model} of $\Delta_v$, denoted $\rmodel(\Delta_v)$, to be the maximal subgroup of $\model_{\res_1}$ which does not admit any essential abelian splitting relative to $(\eta_1 \circ \lambda_0) (\mathcal C_v)$.  If we suppose by induction that $\model_{\res_1}$ is a toral relatively hyperbolic group,  such a maximal subgroup exists by Corollary \ref{c:Enclose}, and indeed is the quasiconvex enclosure $\overline{\eta_1 \circ \lambda_0(\Delta_v)}$ (relative to $(\eta_1 \circ \lambda_0) (\mathcal C_v)$).  In particular, $\rmodel(\Delta_v)$ is toral relatively hyperbolic, and hence finitely presented, torsion-free and CSA by that same result.

Consider the elements $\left\{ (\Delta_{v},\mc{C}_{v}) \right\}$, $\left\{ ( \rmodel(\Delta_{v}), \overline{\mc{C}}_v) \right\}$ of $\Obj(\catCSA)$, where $\overline{\mc{C}}_v$ is the set $\{ (\eta_1\circ\lambda_0) (C) \mid C \in \mc{C}_v\}$.  Also consider the set $\mc{C}$ of all peripheral subgroups of edge groups of the JSJ of $L_0$, and the element $( L_0, \mc{C}) \in \Obj(\catCSA)$.

The model $\model_{\res_0}$ is then the group that appears in the colimit of the following diagram (labelling only groups and not the attendant subgroups), in the category $\catCSA$ (supposing for the moment that such a colimit exists):

\medskip
\
\centerline{
\xymatrix{
    \Delta_{v_1} \ar@{>}[dd]^{\eta_1 \circ \lambda_0} \ar@{>}[rr]^{\iota} \ar@{.}[dr] & &  L_0 \\
 & \Delta_{v_n} \ar@{>}[ur]^{\iota} \ar@{>}[dd]^{\eta_1 \circ \lambda_0} & \\
 \rmodel(\Delta_{v_1}) \ar@{.}[dr] & & \\
 & \model(\Delta_{v_n}) & \\
}}
\medskip

The map $\eta_0 \co L_0 \to \model_{\res_0}$ is the one furnished by the definition of colimit.  Further,  the groups $\rmodel(\Delta_{v_i})$ are subgroups of $\model_{\res_1}$, and we have a map $\eta_1 \circ \lambda_0 \co L_0 \to \model_{\res_1}$.  Temporarily define a set of maximal abelian subgroups $\mc{D}$ of $\model_{\res_1}$ by taking images in $\model_{\res_1}$ of $\mc{C}$ and of the $\overline{\mc{C}}_v$.  Note that this will typically not be the set of maximal abelian subgroups of $\model_{\res_1}$ that it was furnished with when {\em it} was defined as a pushout in $\catCSA$.

  Since the maps $\eta_1\circ \lambda_0$ and inclusion of $\rmodel(\Delta_{v_i})$ into $\model_{\res_1}$ agree on each $\Delta_{v_i}$, and these maps are all in $\Mor(\catCSA)$, the universal property of the colimit yields a map $\mu_0 \co \model_{\res_0} \to \model_{\res_1}$ so that
$\mu_0 \circ \eta_0 = \eta_{1} \circ \lambda_0$.  Note also that (so long as it exists) the universal property of $\model_{\res_0}$, together with the distinguished subgroups of the groups $\Delta_v$, $\rmodel(\Delta_v)$ and $L_0$, furnish $\model_{\res_0}$ with a collection of distinguished subgroups.

Now suppose that $L_0$ is freely decomposable
\[	L_0 = H_1 \ast \ldots \ast H_r \ast F_k	\]
where each $H_i$ is freely indecomposable, and $F_k$ is a free group.  Let $\res_{0,i}$ denote the resolution obtained by restricting $\res_0$ to the subgroup $H_i$ of $L_0$.

   The definition above gives the models $\model_{\res_{0,i}}$ along with maps to the model of $L_1$.
The model of $L_0$ is
\[	\model_{\res_0} = \model_{\res_{0,1}} \ast \ldots \ast \model_{\res_{0,r}} \ast F_k	\]
and the map $\eta_0$ is defined in the obvious way.

\begin{remark}
At the moment, it is not obvious that the colimit above exists.  If it does, it is not clear that the model should be a $\Gamma$--limit group, or what its properties should be (except that if it exists it is clearly torsion-free and CSA).  Much of the rest of the paper is spent proving that models exist and understanding the properties of models, in order to see what they are and why they are useful.
\end{remark}

If we apply the definition of model to the strict resolution $\Gamma \to \Gamma$ given by the identity map, we can see that with respect to any collection $\mc{P}$ of abelian subgroups, the model of $(\Gamma,\mc{P})$ is $(\Gamma,\mc{P})$.

\begin{proposition} \label{p:res of length 0}
For any collection of nontrivial abelian subgroups $\mc{P}$ of $\Gamma$,
the model of $(\Gamma,\mc{P})$ (with respect to the trivial resolution) is $(\Gamma,\mc{P})$.
\end{proposition}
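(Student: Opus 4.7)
This proposition is the base case of the reverse induction used to define model $\Gamma$--limit groups, so the plan is simply to unfold the construction directly for the length-zero resolution $\res = (\Gamma \xrightarrow{\mathrm{id}} \Gamma)$ and verify that it returns $(\Gamma,\mc{P})$.

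First, I would record that $\Gamma$ is torsion-free hyperbolic, and so in particular toral relatively hyperbolic; together with any family $\mc{P}$ of nontrivial abelian subgroups it is an object of $\catCSA$, and $\mathrm{id}_\Gamma$ is a morphism of $\catCSA$ that sends each $P\in\mc{P}$ to itself. In this trivial setting, the maps $\eta_0$ and $\mu_0$ promised by Theorem \ref{t:prelim properties of models} can all be taken to be the identity on $\Gamma$, so that all the conclusions of that theorem are automatic.

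Next, I would examine the colimit diagram in the definition of model for this resolution. For a resolution of positive length, one assembles a colimit in $\catCSA$ involving $L_0$, the rigid vertex groups $\Delta_v$ of its primary JSJ, and their relative models $\rmodel(\Delta_v)$ built inside $\model_{\res_1}$. For the trivial resolution, however, there is no subsequent step $\res_1$; the reverse induction terminates here, and the construction reduces to the colimit of the degenerate diagram consisting of $(\Gamma,\mc{P})$ alone, which is $(\Gamma,\mc{P})$ itself. If $\Gamma$ happens to be freely decomposable, the same reasoning applied to each freely indecomposable free factor, together with the free product step in the definition, reassembles $\Gamma$.

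I do not expect any real obstacle here: the statement is essentially a consistency check anchoring the base of the recursion used throughout the paper, and its proof amounts to observing that the colimit of a one-object diagram in $\catCSA$ is that object, together with compatibility of the construction with free products.
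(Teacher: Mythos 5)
Your conclusion is correct, but the central step of your argument rests on a misreading of the construction, and this is a genuine gap. The trivial resolution $\Gamma\xrightarrow{\mathrm{id}}\Gamma$ does \emph{not} make the colimit diagram degenerate. Even in this case, the definition of the model requires one to take the canonical primary JSJ decomposition $\Delta$ of $\Gamma$ --- which may well be nontrivial, since $\Gamma$ can split over $\Z$ --- to form the relative models $\rmodel(\Delta_v)$ of its rigid vertex groups, i.e.\ the quasiconvex enclosures of the $\Delta_v$ inside $\Gamma$ relative to the incident edge groups $\mc{C}_v$, and to take the colimit of the resulting (genuinely non-trivial) diagram in $\catCSA$. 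A priori this expansion could strictly enlarge $\Gamma$: rigid vertices are replaced by their enclosures, socket boundaries may acquire new roots, and abelian vertex groups are replaced by abelianizations of graphs of groups. So the diagram is not ``the one-object diagram $(\Gamma,\mc{P})$,'' and the observation that ``the reverse induction terminates here'' does not by itself show that the output is $\Gamma$.

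The missing observation --- which is the entire content of the paper's proof --- is that $\rmodel(\Delta_v)=\Delta_v$ for every rigid vertex $v$ of $\Delta$. (By Corollary \ref{c:Enclose}, any subgroup of $\Gamma$ containing $\Delta_v$ and admitting no essential abelian splitting relative to $\mc{C}_v$ is elliptic in $\Delta$, and $2$--acylindricity forces it to fix the vertex $v$, hence to equal $\Delta_v$.) With this in hand, the left-hand column of the colimit diagram consists of identity maps, the sockets acquire no new roots by maximality, and the abelian vertex groups are unchanged, so the colimit is $(\Gamma,\mc{P})$; the freely decomposable case then follows factor by factor as you say. Separately, you should not invoke Theorem \ref{t:prelim properties of models} here: that theorem is established by the induction for which this proposition is the base case, so appealing to it is circular.
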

\begin{proof}
In case $\Gamma$ is freely indecomposable, this is immediate from the observation that $\rmodel(\Delta_v)= \Delta_v$.
The freely decomposable case is also immediate.
\end{proof}

\section{Abelian graphs of groups in $\catCSA$ and the construction of models}
\label{s:Gluing}

Let $L_0$ be a freely indecomposable $\Gamma$--limit group, equipped with a strict resolution $\res_0$.
 In this section, we give a construction of $\model_{\res_0}$ which is amenable to understanding the structure of the model of $L_0$.

Let $\catCSA$ be the category defined in Definition \ref{d:Cat CSA} above.
In order to prove our structural results about the model of $L_0$, we investigate certain kinds of colimits in $\catCSA$.  
Along the way, we exhibit various subtleties about  building torsion-free CSA groups by taking graphs of groups.  We will then show (in case $L_0$ is freely indecomposable) that we can construct $\model_{\res_0}$ as a graph of groups with abelian edge groups.

\subsection{Some colimits}

The following simple example illustrates that the class of torsion-free CSA groups is not closed
under taking pushouts in the category of groups.
\begin{example} \label{ex:Not Comm Trans}
Let $A_1 = A_2 = C = \Z$ and let $\iota_i \co C \to A_i$ be index $2$ embeddings for $i =1,2$.  Then the pushout in the category of groups is $A_1 \ast_C A_2 = \Z \ast_{2\Z} \Z$ which is not commutative transitive.

In the category of abelian groups, the pushout is $\Z \times \Z/2\Z$, which is not torsion-free.

In the category of free abelian {groups}, the pushout is $\Z$, with the maps from the $A_i$ being the same isomorphism and the map from $C$ being the index $2$ embedding.
\end{example}
The next example shows that we do not want to merely take pushouts in the category of torsion-free CSA groups, where all homomorphisms are allowed, but required our maps to be non-degenerate.

\begin{example}
Let $A = \left\langle a_1,a_2 \mid [a_1,a_2] \right\rangle \cong \Z^2$, $B = \langle b_1,b_2 \mid [b_1,b_2] \rangle \cong \Z^2$ and $C = \langle c \mid \varnothing \rangle \cong \Z$.  Let $\iota_1 \co C \to A$ be defined by $\iota_1(c) = a_1$ and $\iota_2 \co C \to B$ by $\iota_2(c) = b_1$.  Then the pushout in the category of groups is 
$G = \langle x,y,z \mid [x,y], [y,z] \rangle \cong {\mathbb F}_2 \times \Z$, which is not CSA.

Certainly, $G_{ab} = \Z^3$ is a torsion-free CSA quotient of $G$.  If we added peripheral structures $\mc{P}_C = \{ C \}$ and the images of $C$ in $A$ and $B$ to $A$ and $B$, then this would be the pushout in $\catCSA$.

On the other hand, killing $y$ gives ${\mathbb F}_2 = \langle x,z \mid \varnothing \rangle$ as a torsion-free CSA quotient of $G$.  These are incomparable quotients, in the sense that there is no torsion-free CSA quotient of $G$ which `lies above' them both.\footnote{Of course, this phenomenon is well known from the construction of Makanin--Razborov diagrams since $G_{ab}$ and $\mathbb F_2$, equipped with the canonical quotient maps from $G$ form the first level of the MR diagram for $G$ (over a nonabelian free group $\mathbb F$).  It is also one of the key reasons for the distinguished abelian subgroups and the requirement that homomorphisms be non-degenerate.}
\end{example}

To remedy these difficulties, we consider the pushout in the category $\catCSA$, and in particular we restrict to non-degenerate maps. 

First, we explicitly describe the pushout in the category of finitely generated abelian groups.  It is easy to see that pushouts exist in the category of abelian groups (the pushout is the abelianization of the pushout in the category of groups).  However, we are interested in free abelian groups rather than merely abelian groups.

\begin{lemma}
Let $A_1,A_2,C$ be finitely generated nontrivial free abelian groups, and let $\iota_i \co C \to A_i$ be embeddings, for $i =1,2$.  There exists a pushout of the diagram

\centerline{
\xymatrix{
   C  \ar@{>}[d] \ar@{>}[r] & A_1 \\
 A_2 }}

\noindent in the category of free abelian groups.
\end{lemma}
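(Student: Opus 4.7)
The plan is to reduce to the known existence of pushouts in the category of all finitely generated abelian groups, and then to kill the torsion in order to land in the category of free abelian groups. Specifically, I would first form the pushout $P$ in the category of abelian groups, which can be written explicitly as
\[
P \;=\; (A_1 \oplus A_2)\big/ N, \qquad N \;=\; \langle\, (\iota_1(c), -\iota_2(c)) \,:\, c \in C \,\rangle.
\]
Since $A_1$, $A_2$ and $C$ are finitely generated, $P$ is a finitely generated abelian group, so by the structure theorem $P \cong \Z^n \oplus T$ for some finite torsion subgroup $T$. I would then take the candidate pushout in free abelian groups to be $P' := P/T$, equipped with the obvious compositions $A_i \to P \to P'$.

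To verify the universal property, suppose we are given a free abelian group $B$ and maps $f_i \co A_i \to B$ with $f_1 \circ \iota_1 = f_2 \circ \iota_2$. The universal property of $P$ in abelian groups yields a unique homomorphism $\phi \co P \to B$ compatible with the $f_i$. Because $B$ is torsion-free, $\phi(T) = 0$, so $\phi$ factors uniquely through $P' = P/T$. This gives the required unique factorization, establishing that $P'$ is a pushout in the category of free abelian groups.

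The main obstacle — really the only subtle point — is ensuring that factoring out $T$ does not destroy the commutativity of the relevant diagram, but this is automatic since the composite $A_i \to P \to P/T$ is itself a homomorphism and the two composites agree already in $P$, hence agree after passing to the quotient. I would briefly remark that injectivity of $\iota_1$ and $\iota_2$ is not needed for existence of the pushout; it is merely the standing hypothesis coming from the ambient setting in $\catCSA$, and it does make $A_1$ and $A_2$ inject into $P$ (though not necessarily into $P'$, since passage to $P/T$ can collapse nontrivial elements of $A_i$ if $\iota_i(C)$ is not a direct summand).
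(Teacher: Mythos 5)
Your proof is correct, but it takes a genuinely different route from the paper's. The paper's argument is geometric: pass to the minimal direct factors of the $A_i$ that contain $\iota_i(C)$ as finite-index subgroups, realize both as lattices in $\R^k$ (where $k$ is the rank of $C$) via the common copy of $C$, and take the lattice spanned by the two; the pushout is this lattice together with the complementary direct factors of the $A_i$. Your construction --- form the pushout $P=(A_1\oplus A_2)/N$ in abelian groups and pass to the torsion-free quotient $P'=P/T$ --- is arguably cleaner for the purpose at hand, since for every torsion-free $B$ the quotient map induces a bijection $\Hom(P/T,B)\to\Hom(P,B)$ and the universal property transfers immediately and with no case analysis; what the paper's lattice picture buys is a more explicit description of the pushout and of the maps into it, which is what one actually uses when computing examples (as in the $\Z*_{2\Z}\Z$ example preceding the lemma, where $P\cong\Z\oplus\Z/2\Z$ and $P'\cong\Z$, consistent with your recipe).

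One correction to your closing parenthetical: when $\iota_1$ and $\iota_2$ are injective, the composites $A_i\to P\to P'$ are in fact still injective. If $a\in A_1$ has torsion image in $P$, then $(ka,0)\in N$ for some $k\neq 0$, i.e.\ $(ka,0)=(\iota_1(c),-\iota_2(c))$ for some $c\in C$; injectivity of $\iota_2$ forces $c=0$, hence $ka=0$ and so $a=0$. This injectivity is not needed for the bare existence statement of the lemma, but it is exactly what the paper relies on in the subsequent lemma and corollary identifying this pushout with the pushout in $\catCSA$, so it is worth recording rather than disclaiming.
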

\begin{proof}
This is straightforward.  One way to proceed is to pass to direct factors of the $A_i$ which contain the image of $C$ as finite-index subgroups, embed as lattices in $\R^k$, and take the span of these two lattices.
\end{proof}

The following lemma is straightforward.
\begin{lemma} \label{l:restricted pushout}
If $M$ is the pushout of a collection of finitely generated free abelian groups $\{ A_i \}$ (with appropriate maps), and $H$ is a finitely generated free abelian group with injective maps $A_i \into H$ for each $i$, making the appropriate diagram commute, then the induced map from $M$ to $H$ is injective.
\end{lemma}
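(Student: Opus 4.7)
The plan is to construct $M$ explicitly via a detour through the category of all abelian groups, and then to reduce the injectivity claim to a linear algebra statement by tensoring with $\mathbb{Q}$.

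First, I would form the pushout $M'$ of the diagram $\{A_i\}$ in the category of abelian groups; this exists as a quotient of $\bigoplus_i A_i$ by the subgroup generated by the relations the diagram imposes, and since only finitely many finitely generated groups appear, $M'$ is finitely generated. Let $T \le M'$ be its torsion subgroup and set $M := M'/T$. Then $M$ is a finitely generated torsion-free abelian group, hence free abelian. To check that $M$ is indeed the pushout in the category of free abelian groups, note that a compatible cone $\{A_i \to K\}$ with $K$ free abelian factors uniquely through $M'$ by the universal property of the abelian pushout, and since $K$ is torsion-free this factorization kills $T$ and descends uniquely to $M \to K$.

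Applying this universal property to the given injections $\phi_i \co A_i \into H$ produces the induced map $\phi \co M \to H$, and the task is to show $\phi$ is injective. Since $M$ is free abelian, the natural map $M \to M \otimes_\Z \mathbb{Q}$ is injective, so it suffices to prove $\phi \otimes \mathbb{Q}$ is injective. Because $- \otimes_\Z \mathbb{Q}$ is a left adjoint, it commutes with colimits, so $M \otimes \mathbb{Q}$ is the pushout in $\mathbb{Q}$-vector spaces of the tensored diagram $\{A_i \otimes \mathbb{Q}\}$; and since $\mathbb{Q}$ is flat, the maps $\phi_i \otimes \mathbb{Q} \co A_i \otimes \mathbb{Q} \to H \otimes \mathbb{Q}$ remain injective.

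The lemma therefore reduces to the analogous claim in the category of $\mathbb{Q}$-vector spaces, which is the main step and also the only potential obstacle. I expect to handle it by unwinding the pushout as a quotient of $\bigoplus_i (A_i \otimes \mathbb{Q})$ by the subspace of relations prescribed by the diagram, and then observing that in the setting relevant to the construction of models (where the diagram comes from amalgamating finitely many free abelian groups along common finitely generated subgroups), the injectivity of the individual legs $\phi_i \otimes \mathbb{Q}$ ensures that no further relations are imposed in $H \otimes \mathbb{Q}$ beyond those already present in the pushout presentation. Once this routine linear algebra check is in hand, injectivity of $\phi \otimes \mathbb{Q}$, and hence of $\phi$, follows immediately.
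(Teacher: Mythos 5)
Your opening reductions are sound: the pushout in the category of finitely generated free abelian groups is the torsion-free quotient of the pushout in abelian groups, and since $M$ and $H$ are free abelian it suffices to check injectivity after tensoring with $\mathbb{Q}$. (For comparison, the paper supplies no argument at all here; the lemma is simply declared straightforward.) The difficulty is that the step you defer as a ``routine linear algebra check'' is precisely where all the content lies, and the claim you make there --- that injectivity of the individual legs $\phi_i\otimes\mathbb{Q}$ ensures that no relations are imposed in $H\otimes\mathbb{Q}$ beyond those already present in the pushout presentation --- is false. Take $C=\langle c\rangle\cong\Z$, $A_1=\langle a_1,a_2\rangle\cong\Z^2$, $A_2=\langle b_1,b_2\rangle\cong\Z^2$ with $\iota_1(c)=a_1$ and $\iota_2(c)=b_1$, and let $H=\Z^2$ with $\phi_1$ and $\phi_2$ both sending the given ordered basis to the standard basis of $H$. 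Both legs are injective and the square commutes, yet the pushout is $(A_1\oplus A_2)/\langle a_1-b_1\rangle\cong\Z^3$, which cannot inject into $\Z^2$; the same example persists after tensoring with $\mathbb{Q}$.

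What injectivity of $M\to H$ actually requires is a condition on how the images meet inside $H$: for a pushout of $A_1\leftarrow C\to A_2$ the induced map is injective if and only if $\phi_1(A_1)\cap\phi_2(A_2)$ contains $\phi_1(\iota_1(C))$ with finite index --- equivalently, the rank of $M$ equals the rank of the subgroup of $H$ generated by the $\phi_i(A_i)$ --- and the analogous statement holds for larger diagrams. Injectivity of the legs alone gives only the containment of the relation subspace in the kernel, not equality. So the lemma carries an implicit hypothesis on the configuration of the images in $H$, and a correct proof must verify that intersection condition in the situation where the lemma is actually invoked (the peripheral subgroups of abelian vertex groups in Proposition \ref{p: Mu is strict}), rather than appeal to injectivity of the legs. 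As written, your argument establishes only the two easy reductions and asserts the crux.
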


This is useful because of the following.

\begin{corollary}
If all groups are free abelian and all maps are injective then
 the pushout of the diagram

\centerline{
\xymatrix{
   C  \ar@{>}[d] \ar@{>}[r] & A \\
 B & }}
 \noindent
in the category of finitely generated free abelian groups is also the pushout of that diagram in $\catCSA$ 
so long as the set of distinguished subgroups of $C$ is nonempty, and each map is a morphism in $\catCSA$.
\end{corollary}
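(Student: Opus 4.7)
The plan is to verify that the pushout $P$ in the category of finitely generated free abelian groups, suitably equipped with distinguished subgroups, satisfies the universal property of a pushout in $\catCSA$. I would first equip $P$ with a collection $\mc{D}_P$ of distinguished subgroups consisting of the images of the distinguished subgroups of $A$ and $B$; since $A\to P$ and $B\to P$ are injective (a direct check on the explicit description of the abelian pushout $A\oplus B / \langle (\iota_A(c),-\iota_B(c))\rangle$, followed by passage to the torsion-free quotient), these images are nontrivial, so $(P,\mc{D}_P)\in \Obj(\catCSA)$.

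For the universal property, suppose $(H,\mc{Q})\in \Obj(\catCSA)$ is given together with morphisms $f\co A\to H$ and $g\co B\to H$ in $\catCSA$ satisfying $f\circ\iota_A = g\circ\iota_B$.  The main step is to show that $f(A)$ and $g(B)$ commute inside $H$.  This is where the hypothesis that $C$ has a nonempty set of distinguished subgroups is crucial: pick a nontrivial distinguished subgroup $D_C\leq C$ and a nontrivial $c\in D_C$.  Since $\iota_A$ and $f$ are morphisms in $\catCSA$, the composite $f\circ\iota_A$ is injective on $D_C$, so $z:=f(\iota_A(c))=g(\iota_B(c))$ is a nontrivial element of $H$.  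Every element of $f(A)$ commutes with $z$ (as $A$ is abelian) and every element of $g(B)$ commutes with $z$, so by commutative transitivity of $H$ (a consequence of $H$ being CSA), $f(A)$ and $g(B)$ commute.  Therefore $H_0:=\langle f(A),g(B)\rangle$ is an abelian subgroup of the torsion-free group $H$, hence a finitely generated free abelian group.

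With this in hand, the universal property of $P$ in the category of finitely generated free abelian groups produces a unique homomorphism $\phi\co P\to H_0\subseteq H$ making the diagram commute.  To check $\phi\in \Mor_{\catCSA}(P,H)$, note that each element of $\mc{D}_P$ is the image in $P$ of a distinguished subgroup $D$ of $A$ or $B$; on that image $\phi$ agrees with $f$ or $g$ (via the injections $A\to P$, $B\to P$), so $\phi$ is injective on $D$, and $\phi(D)$ is conjugate into $\mc{Q}$ because $f$ and $g$ are.  Uniqueness of $\phi$ in $\catCSA$ reduces to uniqueness in the free-abelian pushout.  The only delicate step is the use of commutative transitivity to force $f(A)$ and $g(B)$ into a common abelian subgroup of $H$, and this is precisely where one needs the distinguished subgroups of $C$ to be nonempty --- without a nontrivial shared element, there is no reason for the images of $A$ and $B$ to commute, and indeed the first example of the section shows that the conclusion can fail when they do not.
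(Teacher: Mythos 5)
Your proof is correct, and since the paper states this corollary without any explicit proof (it is presented as following from the preceding lemma on pushouts of free abelian groups), you are supplying exactly the intended argument: the nonempty distinguished subgroup of $C$ guarantees a nontrivial common element $z$ of $f(A)$ and $g(B)$ surviving in $H$, commutative transitivity of the CSA group $H$ then forces $\langle f(A),g(B)\rangle$ to be a finitely generated free abelian subgroup, and the universal property in the free abelian category does the rest. One cosmetic point: the example illustrating failure without the hypothesis on $C$ is the \emph{second} example of the subsection (the one producing the incomparable quotients $\Z^3$ and $\mathbb{F}_2$ of $\mathbb{F}_2\times\Z$), not the first.
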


In Subsection \ref{s:Construction} below, we will need a slightly more general construction than a colimit, involving a graph of groups.

\subsection{Construction of models} \label{s:Construction}

\begin{definition}
Suppose that $G$ is a finitely generated, torsion-free CSA group.  A graph of groups decomposition $\Delta$ of $G$ is {\em JSJ--like} if it satisfies the following properties:
\begin{enumerate}
{ \item $\Delta$ is universally elliptic, i.e.\ the edge stabilizers of $\Delta$ are elliptic in every primary splitting of $G$.}
\item Every non-cyclic abelian subgroup of $L_0$ is elliptic in $\Delta$.
\item  $\Delta$ is bipartite, with one class of vertex stabilizers abelian and the other either (nonabelian) rigid { (in the sense of Definition \ref{d:rigid})} or a maximal socket (with every boundary component used).  We denote the set of abelian vertices by $\{w\}$, the rigid vertices by $\{u\}$ and the socket vertices by $\{v\}$.
\item Every edge group is maximal abelian in the adjacent vertex group which is not abelian.  Furthermore, $\Delta$ is $2$--acylindrical.
\end{enumerate}
\end{definition}
Note that in case $G$ is freely indecomposable, { it follows immediately from Proposition \ref{p:JSJ props} that the canonical primary JSJ decomposition of $G$ is JSJ--like, and that every JSJ--like decomposition is in fact a canonical primary JSJ.  We introduce this definition for the slight extra flexibility of applying it in case that we do not know that $G$ is freely indecomposable.}

Suppose that $L$ is a freely indecomposable $\Gamma$--limit group, that $\Delta$ is a JSJ--like decomposition of $L$ and that
$\rho \co L \to M$ is a strict map {with respect to $\Delta$}, where $M$ is a toral relatively hyperbolic group.  In our applications, $M$ will be the model of $L_1$, where $L \to L_1$ is the beginning of a strict resolution of $L$.

Given $\Delta$ and $\rho$, we build a new graph of groups $\wh\Delta$ as follows:

For a rigid vertex $u$ of $\Delta$, we consider the map $\rho |_{\Delta_u} \co \Delta_u \to M$.  Since $\lambda_0$ is strict, $\rho |_{\Delta_u}$ is injective.  We define
$\wh{\Delta}_u$ to be the quasiconvex enclosure of $\rho(\Delta_u)$ in $M$ {(relative to the images under $\rho$ of the adjacent edges groups)}.  Let $\eta_{u} = \rho |_{\Delta_u}$ be the natural inclusion from $\Delta_u$ to $\wh{\Delta}_u$.

 Define an equivalence relation on the edges of $\Delta$ as follows:  Two edges $e_1$ and $e_2$ adjacent to a rigid vertex $u$ are equivalent if $Z_{\wh{\Delta}_u}(\eta_u(\Delta_{e_1}))$ and $Z_{\wh{\Delta}_u}(\eta_u(\Delta_{e_2}))$ are conjugate in $\wh{\Delta}_u$.  Edges not adjacent to rigid vertices are equivalent only to themselves.

Suppose that $e_1$ and $e_2$ are adjacent to a nonabelian vertex $y$ and are equivalent. Let $Z_1 = Z_{\wh{\Delta}_y}(\eta_y(\Delta_{e_1}))$ and $Z_2 = Z_{\wh{\Delta}_y}(\eta_y(\Delta_{e_2}))$.  By CSA, {\em any} $\gamma \in \wh{\Delta}_y$ which conjugates $Z_1$ to $Z_2$ induces the same isomorphism, so there is a canonical isomorphism between $Z_1$ and $Z_2$.

  The underlying graph of $\widehat{\Delta}$ is equal to the quotient graph of the underlying graph of $\Delta$ obtained by identifying edges which are equivalent according to the above relation.  Note that there is a natural way of labeling each vertex of this graph as exactly one of abelian, rigid or socket.  If $y$ is a vertex of $\Delta$ we let $[y]$ denote the corresponding vertex in $\wh{\Delta}$ under the quotient map; likewise, the edges of $\wh{\Delta}$ are the equivalence classes $[e]$ where $e$ is an edge of $\Delta$.  Note, however, that the equivalence classes $[u]$ of rigid vertices and $[v]$ of socket vertices are singletons, and so we will usually write $u$ for $[u]$ and $v$ for $[v]$.
  
We now define the edge and vertex groups of $\wh{\Delta}$ as follows:

\begin{enumerate}
\item A rigid vertex $u$ is labelled by the group $\wh{\Delta}_{u}$ defined above.
\item A socket vertex $v$ is labelled by the socket group $\wh{\Delta}_{v}$, obtained from $\Delta_v$ by keeping the same surface, but adding any extra roots that exist in $M$ to the boundary components.
\item Let $[e]$ be an edge joining an abelian vertex $[w]$ to a vertex $y$.  Note that $y$ is not an abelian vertex.
Then $\wh{\Delta}_{[e]} = Z_{\wh{\Delta}_y}(\eta_y(\Delta_e))$.
\item Finally, let $[w]$ be an abelian vertex, and let $[e_1], \ldots , [e_l]$ be the adjacent edges.  If $[w]=\{w_k\}$ and $[e_i]=\{e_{i,j}\}$ then we have two sets of inclusions of free abelian groups:
\begin{enumerate}
\item $\iota_{i,j}\co\Delta_{e_{i,j}}\to\wh{\Delta}_{[e_i]}$ for each $i,j$;
\item $\kappa_{i,j}\co\Delta_{e_{i,j}}\to\Delta_{w_k}$ whenever $e_{i,j}$ is incident at $w_k$.
\end{enumerate}
We let these data define a graph of groups $\Lambda_{[w]}$ in the natural way: the underlying graph is bipartite, with vertex set the disjoint union of the $\{w_k\}$ and $\{[e_i]\}$, and edge set $\{e_{i,j}\}$ equipped with the natural incidence relations; the vertex group of $w_k$ is $\Delta_{w_k}$, the vertex group of $[e_i]$ is $\wh{\Delta}_{[e_i]}$,  the edge group of $e_{i,j}$ is $\Delta_{e_{i,j}}$, and the edge maps are given by $\iota_{i,j}$ and $\kappa_{i,j}$.   The vertex group $\wh{\Delta}_{[w]}$ is now defined to be the abelianization of $\pi_1\Lambda_{[w]}$.
\end{enumerate}

We remark that the definition of $\wh{\Delta}_{[e]}$ used the representative $e$ of $[e]$; however we have already noted that the appropriate edge groups are {\em canonically} isomorphic, so there is no ambiguity in this definition.  

Finally, the edge maps of $\wh{\Delta}$ are defined in the natural way using the canonical isomorphisms of conjugacy representatives of edge groups and either inclusion (in the case of rigid and socket vertices) or the natural inclusion of a vertex group in the fundamental group of a graph of groups (in the abelian case).

We call the graph of groups $\wh{\Delta}$ the {\em expansion} of $\Delta$ with respect to $\rho \co L \to M$.

\begin{lemma} \label{l:Model is TRH}
Suppose that $L$ is a freely indecomposable $\Gamma$--limit group, and that $\Delta$ is a JSJ--like decomposition of $L$.  Suppose further that $\rho \co L \to M$ is a strict map, that $M$ is toral relatively hyperbolic, and that $\wh\Delta$ is the expansion of $\Delta$ with respect to $\rho$.  Then $\pi_1(\wh\Delta)$ is toral relatively hyperbolic, and in particular torsion-free and CSA.
\end{lemma}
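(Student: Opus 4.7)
The plan is to apply Dahmani's combination theorem (Theorem \ref{t:DahComb}) to the graph of groups $\wh\Delta$, after verifying that each vertex group of $\wh\Delta$ is toral relatively hyperbolic, that the edge groups embed as (sub-)parabolic subgroups of adjacent vertex groups, and that $\wh\Delta$ is acylindrical. Granted these, Theorem \ref{t:DahComb}(1) produces a relatively hyperbolic group whose peripheral subgroups come from the peripheral subgroups of the vertex groups; since these are abelian, $\pi_1(\wh\Delta)$ will be toral relatively hyperbolic, and in particular torsion-free and CSA.

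I would first verify that each vertex group of $\wh\Delta$ is toral relatively hyperbolic. For a rigid vertex $u$, the group $\wh\Delta_u$ is by definition the quasi-convex enclosure of $\rho(\Delta_u)$ in $M$, and since $M$ is assumed toral relatively hyperbolic, Corollary \ref{c:Enclose} applies directly. For a socket vertex $v$, the group $\wh\Delta_v$ is a surface group with roots attached along boundary components, i.e.\ the fundamental group of a compact hyperbolic $2$-orbifold, and in particular is a hyperbolic group. For an abelian vertex $[w]$, I need to show that the abelianization of $\pi_1\Lambda_{[w]}$ is finitely generated and free abelian. This step is the main technical obstacle I anticipate: although all vertex and edge groups of $\Lambda_{[w]}$ are free abelian with injective edge maps, an arbitrary pushout of free abelian groups in the category of abelian groups may introduce torsion (as in Example \ref{ex:Not Comm Trans}). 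To rule this out, I plan to use the canonical isomorphism between the conjugate centralizers associated to equivalent edges (which exists by CSA in $\wh\Delta_u \le M$) to produce compatible embeddings of all the vertex and edge groups of $\Lambda_{[w]}$ into a common maximal abelian subgroup of $M$; Lemma \ref{l:restricted pushout} then forces the pushout in the category of finitely generated free abelian groups to exist and to coincide with the abelianization of $\pi_1\Lambda_{[w]}$ (up to a free abelian factor coming from the first Betti number of the underlying graph of $\Lambda_{[w]}$).

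Next I would verify the remaining hypotheses of the combination theorem. By step (3) of the construction of $\wh\Delta$, each edge group incident at a non-abelian vertex $y$ equals $Z_{\wh\Delta_y}(\eta_y(\Delta_e))$, the centralizer of a non-trivial abelian subgroup in the toral relatively hyperbolic group $\wh\Delta_y$; such a centralizer is maximal abelian in $\wh\Delta_y$, and hence is a maximal parabolic subgroup and in particular fully quasi-convex. Each edge group also embeds as a subgroup of the corresponding abelian vertex group (itself a parabolic subgroup of itself), which is again fully quasi-convex. Acylindricity of $\wh\Delta$ is inherited from the $2$-acylindricity of the JSJ-like decomposition $\Delta$, using that the edge groups of $\wh\Delta$ are maximal abelian in the adjacent non-abelian vertex groups.

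With these verifications in hand, Theorem \ref{t:DahComb}(1) applies to show that $\pi_1(\wh\Delta)$ is relatively hyperbolic, with peripheral structure inherited from the (abelian) peripheral subgroups of the vertex groups. Hence $\pi_1(\wh\Delta)$ is toral relatively hyperbolic, as required. The hard part, as flagged above, is controlling the abelian vertex groups $\wh\Delta_{[w]}$: the equivalence relation defining $\wh\Delta$ is set up precisely so that the relevant centralizers in $M$ admit canonical CSA-identifications, which is what makes the free-abelian pushout in $\catCSA$ well-behaved; everything else reduces to known facts about toral relatively hyperbolic groups.
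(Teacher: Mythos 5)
Your proposal is correct and follows essentially the same route as the paper, which likewise deduces the result from Dahmani's combination theorem, Corollary \ref{c:Enclose} for the rigid vertex groups, and the malnormality of the family of incident edge groups at each non-abelian vertex (the paper's own proof is in fact much terser and does not spell out the torsion-freeness of the abelian vertex groups, which you rightly identify as the delicate point). One small correction to your treatment of that point: the vertex groups $\Delta_{w_k}$ of $\Lambda_{[w]}$ need not embed in $M$ under $\rho$ --- strictness only guarantees injectivity on the peripheral subgroups $\overline{P}(\Delta_{w_k})$ --- but since $\overline{P}(\Delta_{w_k})$ is a direct factor of $\Delta_{w_k}$ the complementary summand splits off as a free abelian direct summand of the abelianization, and your argument via Lemma \ref{l:restricted pushout} then goes through for the remaining pushout.
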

\begin{proof}

By construction, the edge groups incident at each rigid vertex form a malnormal family.  In particular, we may apply Dahmani's Combination Theorem \ref{t:DahComb} together with the fact that the vertex groups $\wh{\Delta}_v$ are toral relatively hyperbolic (by Corollary \ref{c:Enclose}) to deduce that $\pi_1\wh{\Delta}$ is toral relatively hyperbolic, in particular torsion-free and CSA.
\end{proof}

There is a natural homomorphism $\eta \co L \to \pi_1(\wh{\Delta})$ defined as follows.  For rigid vertices $\Delta_u$, the map $\eta$ is given by $\eta_u \co \Delta_u \to \wh{\Delta}_u$.  For socket vertex groups $\Delta_v$, $\eta$ is defined to be the inclusion of the socket $\Delta_v$ into the socket $\wh{\Delta}_v$.  For abelian vertex group $\Delta_w$, note that $\Delta_w$ is a vertex group of $\Lambda_{[w]}$, and so $\eta$ is defined to be the composition of the inclusion map $\Delta_w \to \pi_1\Lambda_{[w]}$ with abelianization.  Finally, after picking a maximal tree in the underlying graph of $\Delta$, each stable letter either maps to a stable letter of $\wh{\Delta}$ or to an {element} of $\pi_1\Lambda_{[w]}$ for some abelian vertex group $w$.

\begin{proposition} \label{p:Map is injection}
In the above situation $\eta \co L \to \pi_1\wh{\Delta}$ is an injection.
\end{proposition}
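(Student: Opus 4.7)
The plan is to reduce injectivity of $\eta$ to injectivity of its restrictions to the vertex and edge groups of $\Delta$, via a Bass--Serre argument comparing the two Bass--Serre trees involved.

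First I would show $\eta$ is injective on each vertex group. On a rigid vertex $u$, $\eta|_{\Delta_u}=\eta_u=\rho|_{\Delta_u}$ is injective by strictness of $\rho$ (Definition \ref{d:strict}(1)). On a socket vertex $v$, $\widehat{\Delta}_v$ is obtained from $\Delta_v$ by adjoining roots to generators of the boundary components, so $\Delta_v$ visibly embeds in $\widehat{\Delta}_v$. On an abelian vertex $w$, $\eta|_{\Delta_w}$ factors as $\Delta_w\hookrightarrow\pi_1\Lambda_{[w]}\twoheadrightarrow\widehat{\Delta}_{[w]}$. The first map is injective by the normal form theorem for graphs of groups, since all edge inclusions $\iota_{i,j},\kappa_{i,j}$ are injective maps of free abelian groups. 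For the second map, since every vertex group of $\Lambda_{[w]}$ is abelian, one computes $\widehat{\Delta}_{[w]}$ explicitly as the direct sum of $\mathbb{Z}^{b_1(\Lambda_{[w]})}$ with the cokernel of
\[
\bigoplus_{(i,j)} \Delta_{e_{i,j}} \longrightarrow \Big(\bigoplus_k \Delta_{w_k}\Big) \oplus \Big(\bigoplus_i \widehat{\Delta}_{[e_i]}\Big), \quad c_{i,j}\mapsto (\kappa_{i,j}(c_{i,j}),\,-\iota_{i,j}(c_{i,j})).
\]
Using injectivity of the $\iota_{i,j}$ together with $2$-acylindricity of $\Delta$ (which forces $\Delta_{e_1}\cap\Delta_{e_2}=1$ for distinct edge groups incident at a common abelian vertex), one verifies that the inclusion $\Delta_{w_k}\to\widehat{\Delta}_{[w]}$ is injective. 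The same reasoning shows $\eta$ is injective on each edge group $\Delta_e$.

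Next, let $T$ be the Bass--Serre tree of $\Delta$ (on which $L$ acts with quotient $\Delta$) and let $\widehat{T}$ be the Bass--Serre tree of $\widehat{\Delta}$ (on which $\pi_1\widehat{\Delta}$ acts, and hence $L$ acts via $\eta$). A choice of compatible basepoints produces an $L$-equivariant simplicial map $f\co T\to\widehat{T}$ that sends every edge above $e\in\Delta$ to an edge above $[e]\in\widehat{\Delta}$. By injectivity of $\eta$ on edge groups, $f$ does not collapse any edge. The remaining point is local injectivity at vertices of $T$: at a rigid vertex $\tilde{u}$, two incident edges in $T$ over equivalent edges $e_1\sim e_2$ must not be identified by $f$, and at an abelian vertex $\tilde{w}$, distinct incident edges must map to distinct edges at $f(\tilde{w})$. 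Granted local injectivity, $f$ is an isometric embedding; if $g\in\ker(\eta)$ then $g$ acts trivially on $\widehat{T}$, hence on $f(T)$, hence on $T$, so $g$ lies in some vertex stabilizer $\Delta_y$, and vertex-group injectivity gives $g=1$.

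The main obstacle is the local-injectivity verification at a rigid vertex $\tilde{u}$ whose incident edges include equivalent pairs $e_1\sim e_2$, with potentially distinct abelian endpoints $w_1,w_2$ merged to a single $[w]$. This is precisely what the abelian vertex construction via $\Lambda_{[w]}$ is designed to track: the graph structure of $\Lambda_{[w]}$ retains the combinatorial distinction between $w_1$ and $w_2$, and the resulting free abelian summand of $\widehat{\Delta}_{[w]}$ encodes which endpoint an edge reaches. Verifying this rigorously involves combining the CSA property, malnormality of maximal abelian subgroups in $\widehat{\Delta}_u$ and $\widehat{\Delta}_{[w]}$, and $2$-acylindricity of $\Delta$ to pin down the relevant conjugators.
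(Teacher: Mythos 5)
There is a genuine gap, and it sits exactly where you flagged your ``main obstacle.'' Your plan compares the Bass--Serre tree $T$ of $\Delta$ itself with $\widehat{T}$, and asserts that injectivity on edge groups plus an unverified local-injectivity claim makes $f\co T\to\widehat{T}$ an embedding. But the no-fold condition at a vertex $y$ is precisely $\eta^{-1}(\widehat{\Delta}_{[e]})\cap\Delta_y=\Delta_e$, and for the original $\Delta$ this is \emph{false} in general at abelian vertices: since $\widehat{\Delta}_{[e]}=Z_{\widehat{\Delta}_x}(\eta_x(\Delta_e))$ is typically much larger than $\eta(\Delta_e)$, an element $g\in\overline{P}(\Delta_w)\smallsetminus\Delta_e$ can satisfy $\eta(g)\in\widehat{\Delta}_{[e]}$ (for instance when two edges incident at $w$ are identified in $\widehat{\Delta}$, a product $g_1g_2$ with $g_i\in\Delta_{e_i}$ lands in $\widehat{\Delta}_{[e]}$ without lying in $\Delta_e$). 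So $f$ genuinely folds, and no amount of CSA/malnormality bookkeeping will ``pin down the conjugators'' to prevent this. The missing idea is the paper's replacement of $\Delta$ by a modified decomposition $\Delta'$ of the same group $L$: the rigid vertex groups are enlarged to $\Delta'_u=E(\Delta_u)\cap\rho^{-1}(\widehat{\Delta}_u)$ (pulling the peripheral subgroups of adjacent abelian vertices across the edges, which preserves the fundamental group), and the edge groups enlarged to $\Delta'_e=\Delta'_w\cap\Delta'_y$. The enlarged edge groups absorb exactly the elements that would otherwise cause a fold; the argument at an abelian vertex then shows only that $g\in\overline{P}(\Delta_w)\subseteq E(\Delta_x)$, which suffices to place $g$ in $\Delta'_e$ but not in $\Delta_e$. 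Comparing $T'$ (not $T$) with $\widehat{T}$ is what makes the no-fold verification go through.

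A secondary issue: your verification that $\Delta_{w_k}\to\widehat{\Delta}_{[w]}$ is injective via the explicit cokernel is under-justified when two edges in the same equivalence class $[e_i]$ are incident at the same $w_k$ (the relations $\sum_j\iota_{ij}(c_{ij})=0$ then admit nonzero solutions, and $2$-acylindricity alone does not rule out cancellation in the $\Delta_{w_k}$ coordinate); one really needs the strictness of $\rho$ on $\overline{P}(\Delta_{w_k})$ here, as the paper uses.
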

\begin{proof}
In order to prove injectivity of the map $\eta$ it is convenient to modify $\Delta$ to produce a new graph of groups $\Delta'$.  The underlying graph of $\Delta'$ is the same as the underlying graph of $\Delta$.  If ${v}$ is a { (flexible)} socket vertex of $\Delta$ then we take $\Delta'_{v}=\Delta_{v}$, and likewise if $w$ is an abelian vertex we take $\Delta'_w=\Delta_w$.   For ${u}$ a rigid vertex of $\Delta$, we set $\Delta'_{u}=E(\Delta_{u})\cap \rho^{-1}({\widehat{\Delta}}_{u})$ (recalling from Definition \ref{d:Envelope} that $E(\Delta_{u})$ denotes the envelope of $\Delta_{u}$).  Finally, for $e$ an edge adjoining an abelian vertex $w$ and a socket or rigid vertex $y$, we take $\Delta'_e=\Delta'_w\cap \Delta'_y$.  We make three remarks about the construction of $\Delta'$.

First, $\Delta'$ was obtained from $\Delta$ by pulling certain elements across edges, and so $\pi_1\Delta'= L$. 

Second, if $y$ is a rigid or socket vertex in $\Delta$, then it remains the case that the incident edge groups in $\Delta'$ form a malnormal family in $\Delta'_y$; in particular, $\Delta'$ is also $2$--acylindrical.

Third, for each vertex or edge $y$ of $\Delta$, we have that $\eta(\Delta'_y)\subseteq \widehat{\Delta}_{[y]}$, by the definition of $\Delta'_y$ in the different cases.  It follows that there is a morphism of trees $\alpha\co T'\to\widehat{T}$ (the Bass--Serre trees of $\Delta'$ and $\widehat{\Delta}$ respectively) that intertwines $\eta$ -- that is, $\alpha(g.x)=\eta(g)\alpha(x)$ for any $g\in L$ and $x\in T'$.  

To show that $\eta$ is injective, it now suffices to prove that $\eta$ is injective on vertex groups of $\Delta'$ (which follows from strictness of $\rho$) and that $\alpha$ is a local injection (i.e. does not factor through a fold).  
In order to prove that $\alpha$ does not factor through a fold, we must prove that for each edge $e$ and each vertex $y$ we have $\eta^{-1}(\widehat{\Delta}_{[e]}) \cap \Delta'_y = \Delta'_e$.  Of course, it is clear that $\Delta'_e \subseteq \eta^{-1}(\widehat{\Delta}_{[e]}) \cap \Delta'_y$, and it is the reverse inclusion that must be proved.

So, suppose that $g \in \eta^{-1}(\widehat{\Delta}_{[e]}) \cap \Delta'_y$.  As usual, there are three cases, depending on the type of $y$.  In case $y$ is a socket vertex,  the only difference between $\Delta'_y$ and $\wh{\Delta}_y$ is that there may be extra roots of boundary components of the surface which exist in $M$ but not in $L$.  Thus in this case we clearly have $g \in \Delta{'}_e$, as required.  Suppose then that $y$ is a rigid vertex group.  By definition $\Delta'_y =E(\Delta_y)\cap \rho^{-1}({\widehat{\Delta}}_y)$ and $\Delta'_e =\Delta'_w\cap \Delta'_y$, where $w$ is the vertex at the other end of $e$.
Also, by definition, $\widehat{\Delta}_{[e]} = Z_{\widehat{\Delta}_y}({\eta_y}(\Delta_e))$ (for a suitable choice of representative edge $e$).  But $g \in \Delta'_y$, so $g \in Z_{\Delta'_y}(\Delta_e) = \Delta'_e$, by construction and malnormality.

Finally, suppose that $y$ is an abelian vertex (so $\Delta'_y=\Delta_y$), and let $x$ be the vertex at the other end of $e$.  If $x$ is a socket vertex then there is nothing to prove, so we assume that $x$ is rigid.

{
The defining graph of groups $\pi_1\Lambda_{[y]}$, of which $\widehat{\Delta}_{[y]}$ is the abelianization, has a quotient obtained from killing each peripheral subgroup of each $\Delta_{y_i}$, and also each vertex group of the form $\widehat{\Delta}_{[e_i]}$. By abelianizing, we obtain a quotient of $\widehat{\Delta}_{[y]}$.  In this quotient, the image of each $\widehat{\Delta}_{[e_i]}$ is trivial, and the image of $\Delta_y$ is $\Delta_y/\overline{P}(\Delta_y)$.  Since $g \in \eta^{-1}(\widehat{\Delta}_{[e]})$, its image in this quotient is trivial.  Therefore, $g \in \overline{P}(\Delta_y) \subseteq E(\Delta_x)$} and so, since $\eta(g)$ is contained in $\widehat{\Delta}_{[e]}\subseteq \widehat{\Delta}_x$, we have
\[
g\in  E(\Delta_x)\cap \eta^{-1}(\widehat{\Delta}_x) =  E(\Delta_x)\cap \rho^{-1}(\overline{\Delta}_x)=\Delta'_x
\]
whence, since $g$ is also contained in $\Delta'_y$, we have that $g\in\Delta'_x\cap\Delta'_y=\Delta'_e$ as required.
\end{proof}

\begin{proposition} \label{p:Is a model}
Suppose that $L_0$ is a freely indecomposable $\Gamma$--limit group and that
$\res_0$ is a strict resolution of $L_0$, starting with the strict map $\lambda_0 \co L_0 \to L_1$ and continuing with the strict resolution $\res_1$ of $L_1$.  Suppose further that $\model_{\res_1}$ exists, and satisfies:
\begin{enumerate}
\item The natural map $\eta_1 \co L_1 \to \model_{\res_1}$ is an injection; and
\item $\model_{\res_1}$ is toral relatively hyperbolic.
\end{enumerate}
Let $\rho \co L_0 \to \model_{\res_1}$ be the map $\rho = \eta_1 \circ \lambda_0$,  let $\Delta$ be a JSJ--like decomposition of $L_0$ and let $\wh{\Delta}$ be the expansion of $\Delta$ with respect to $\rho$.

Equip $\pi_1\wh{\Delta}$ with the distinguished set $\mc{D}_{\wh\Delta}$ of peripheral subgroups of abelian vertex groups of $\wh\Delta$.  Then $(\pi_1\wh{\Delta},\mc{D}_{\wh\Delta})$ is the model of $L_0$ with respect to $\res_0$.
\end{proposition}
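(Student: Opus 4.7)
The plan is to verify that $(\pi_1\wh{\Delta}, \mc{D}_{\wh\Delta})$ satisfies the universal property defining $\model_{\res_0}$ as the colimit in $\catCSA$ of the diagram whose objects are $L_0$, the rigid vertex groups $\Delta_u$ of the primary JSJ, and their relative models $\rmodel(\Delta_u) = \wh{\Delta}_u$.

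First, I would check that $(\pi_1\wh{\Delta}, \mc{D}_{\wh\Delta})$ is an object of $\catCSA$: torsion-freeness and CSA are supplied by Lemma \ref{l:Model is TRH}, and $\mc{D}_{\wh\Delta}$ is a family of nontrivial abelian subgroups by construction. I would then exhibit the required cocone: the map $\eta \co L_0 \to \pi_1\wh{\Delta}$ of Proposition \ref{p:Map is injection}, and, for each rigid $u$, the natural vertex-group inclusion $\wh{\Delta}_u \hookrightarrow \pi_1\wh{\Delta}$. Commutativity on each $\Delta_u$ is immediate since $\eta|_{\Delta_u} = \eta_u = \rho|_{\Delta_u}$, and the two families of maps are morphisms in $\catCSA$ because distinguished subgroups of $L_0$ and $\rmodel(\Delta_u)$ are carried into $\mc{D}_{\wh\Delta}$ by the construction of $\wh\Delta$, with non-degeneracy following from injectivity of $\eta$ together with strictness of $\rho$.

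The substantive step is the universal property: given any $X \in \Obj(\catCSA)$ and compatible morphisms $\psi \co L_0 \to X$ and $\psi_u \co \rmodel(\Delta_u) \to X$, produce a unique morphism $\Psi \co \pi_1\wh{\Delta} \to X$. I would construct $\Psi$ vertex-by-vertex using the graph-of-groups structure. On rigid vertices set $\Psi|_{\wh{\Delta}_u} = \psi_u$. On socket vertices $\wh{\Delta}_v$, $\psi$ defines $\Psi$ on the un-extended $\Delta_v$; the extra roots adjoined to boundary components are handled via CSA of $X$, with existence of the images forced through the centralizer-based edge groups $\wh{\Delta}_{[e]} = Z_{\wh{\Delta}_v}(\eta_v(\Delta_e))$ shared with the adjacent abelian vertex, and uniqueness forced by CSA. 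On abelian vertices $\wh{\Delta}_{[w]}$, use that this group is the abelianization of $\pi_1\Lambda_{[w]}$, whose vertex groups $\Delta_{w_k}$ map via $\psi$ and whose other vertex groups $\wh{\Delta}_{[e_i]}$ sit inside adjacent nonabelian vertex groups on which $\Psi$ has already been defined; these assemble into a homomorphism out of $\pi_1\Lambda_{[w]}$ whose image lies in an abelian subgroup of $X$ (by commutative transitivity), hence descends to $\wh{\Delta}_{[w]}$. Compatibility of these vertex maps along the edge maps of $\wh\Delta$ is then routine, and uniqueness of $\Psi$ follows from the fact that $\pi_1\wh\Delta$ is generated by vertex groups and stable letters of a spanning tree, each of whose image is forced by the data.

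The main obstacle is the coherent definition of $\Psi$ on the socket and abelian vertex groups, specifically handling the extra roots adjoined in $\wh{\Delta}_v$ and the abelianization defining $\wh{\Delta}_{[w]}$. The essential tools are the CSA property of $X$, which forces uniqueness of roots inside the relevant centralizers and permits the abelianization map to descend unambiguously, and the centralizer-based construction of edge groups in $\wh{\Delta}$, which ensures the existence of the required images from the data of the cocone.
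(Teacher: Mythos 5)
Your proposal is correct and follows essentially the same route as the paper: both verify the universal property of the colimit by constructing the map to an arbitrary cocone target vertex-by-vertex on $\wh{\Delta}$ --- using the given morphisms on rigid vertices, the adjoined roots together with CSA of the target on socket vertices, the abelianization of $\pi_1\Lambda_{[w]}$ plus non-degeneracy and commutative transitivity on abelian vertices, and a maximal tree for the stable letters --- with uniqueness coming from generation by vertex groups and stable letters, and with the compatibility of distinguished subgroups checked at the end. Your handling of the extra roots on sockets is phrased via the centralizer-based edge groups and uniqueness of roots in a torsion-free CSA group where the paper instead invokes the universal property of a restricted sub-colimit, but this is a difference of wording rather than of substance.
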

\begin{proof}
We briefly recall what it means for $(\pi_1\wh{\Delta},\mc{D}_{\wh\Delta})$ to be the required model and what we have to prove.  Recall that we have, for each rigid vertex $v$ of $\Delta$, two pairs:
\begin{enumerate}
\item $(\Delta_v,\mc{C}_v)$, where $\Delta_v$ is the vertex group, and $\mc{C}_v$ is the family of adjacent edge groups; and 
\item $(\rmodel_{\res_{1}}(\Delta_v), \bar{\mc{C}}_v)$, where $\rmodel_{\res_{1}}(\Delta_v)$ is the relative model (quasiconvex enclosure) of $\Delta_v$ and $\bar{\mc{C}}_v$ is the set of centralizers in $\rmodel_{\res_{1}}(\Delta_v)$ of the images of $\mc{C}_v$.
\end{enumerate}
We also have the pair $(L_0,\mc{D}_\Delta)$, where $\mc{D}_\Delta$ is the set of peripheral subgroups of abelian vertex groups of $\Delta$.

Equip $\pi_1\widehat{\Delta}$ with the collection $\mc{D}_{\widehat{\Delta}}$ of peripheral subgroups of abelian vertex groups of $\widehat{\Delta}$.  We claim that $(\pi_1\widehat{\Delta},\mc{D}_{\widehat{\Delta}})$ is the required colimit in $\catCSA$.  Indeed, suppose that $(H,\mc{Q}_0)$ is an object in $\catCSA$ and $\alpha\in \Mor_{\catCSA}((L_0,\mc{D}_\Delta), (H,\mc{Q}_0))$, $\beta_v\in \Mor_{\catCSA}(({\rmodel_{\res_1}(\Delta_v)}, \bar{\mc{C}}_v), (H,\mc{Q}_0))$ are morphisms that make the diagram below commute.

\centerline{
\xymatrix{
    \Delta_{u_1} \ar@{>}[dd]^{\eta_1 \circ \lambda_0} \ar@{>}[rr]^{\iota} \ar@{.}[dr] & &  L_0 \ar[ddd]^\alpha \\
 & \Delta_{u_n} \ar@{>}[ur]^{\iota} \ar@{>}[dd]^{\eta_1 \circ \lambda_0} & \\
 \rmodel_{\res_1}(\Delta_{u_1}) \ar@{.}[dr]  \ar[drr]^{\beta_{v_1}}  & & \\
 & \rmodel_{\res_1}(\Delta_{u_n}) \ar[r]_{\beta_{v_n}} &  H\\
}}

{The map $\pi\co\pi_1\widehat{\Delta}\to H$ is defined as follows.}

\begin{enumerate}
\item For each rigid vertex $u$, $\widehat{\Delta}_{u}=\rmodel_{\res_0}(\Delta_u)$ and $\pi|_{\widehat{\Delta}_{u}}=\beta_{u}$.
\item For each socket vertex $v$, $\widehat{\Delta}_{v}$ { is obtained from attaching extra roots to boundary elements of the socket $\Delta_v$.  This means that $\widehat{\Delta}_v$ is the colimit of a diagram obtained by restricting the maps $\iota$ and $\eta_1\circ\lambda_0$ to subgroups.  The universal property of this colimit defines the map $\pi|_{\widehat{\Delta}_{u_i}}$}
\item Consider an abelian vertex $[w]$. The commutative diagram above shows that the maps $\alpha$ and $\beta_u$ naturally induce a unique, non-degenerate map $\pi_1\Lambda_{[w]}\to H$.  Non-degeneracy, together with the hypothesis that $H$ is CSA, implies that this map factors through the abelianization.
\item We may choose a maximal tree for $\Delta$ that maps to a maximal tree for $\wh{\Delta}$.  Therefore, each stable letter $\wh{t}$ of $\widehat{\Delta}$ corresponds to a stable letter $t$ in $\Delta$ and we define $\pi(\wh{t})=\alpha(t)$. 
\end{enumerate}

It is easy to check that $\pi$ is a homomorphism.  It remains to check that $\pi$ respects the associated families of distinguished subgroups.  But this is clearly the case, because the homomorphisms $\alpha$ and $\beta_v$ do, and the distinguished subgroups of $\pi_1\widehat{\Delta}$ are images of distinguished subgroups of $L_0$ and $\rmodel_{\res_1}(\Delta_v)$.
\end{proof}

We can now prove the following theorem by induction on the length of a strict resolution.

\begin{theorem} \label{t:Model props}
As above, let $L_0$ be a $\Gamma$--limit group with strict resolution $\res_0$.  
\begin{enumerate}
\item $\model_{\res_0}$ exists.
\item The natural map $\eta_0\co L_0\to\model_{\res_0}$ is an injection.
\item $\model_{\res_0}$ is a toral relatively hyperbolic group, in particular is finitely presentable.
\end{enumerate}
\end{theorem}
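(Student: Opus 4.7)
The plan is to prove the three statements simultaneously by reverse induction on the position of $L_i$ in the resolution $\res_0 = (L_0 \to L_1 \to \cdots \to L_n \to \Gamma)$. That is, we induct on $n-i$, showing that $\model_{\res_i}$ exists, that $\eta_i\co L_i\to \model_{\res_i}$ is injective, and that $\model_{\res_i}$ is toral relatively hyperbolic.

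The base case is the trivial resolution $\Gamma\to\Gamma$, which is handled by Proposition~\ref{p:res of length 0}: there the model is $\Gamma$ itself (with whatever distinguished family is appropriate), which is trivially toral relatively hyperbolic, and $\eta$ is the identity. More generally, when the input is the length-zero resolution ending at $L_n$ with strict map $\lambda_n\co L_n\to\Gamma$, we set $\model_{\res_n}=\Gamma$ (or rather a free product of the quasiconvex enclosures of the images of the freely indecomposable factors of $L_n$, taken inside $\Gamma$); strictness of $\lambda_n$ gives the required injection, and $\Gamma$ is toral relatively hyperbolic.

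For the inductive step, assume the statement for $\res_1$. First handle the case that $L_0$ is freely indecomposable. Let $\Delta$ be the canonical primary JSJ of $L_0$, which is JSJ--like by Proposition~\ref{p:JSJ props}, and set $\rho = \eta_1\circ \lambda_0\co L_0\to \model_{\res_1}$. Since $\lambda_0$ is strict and $\eta_1$ is injective (by induction), the map $\rho$ is strict with respect to $\Delta$. Since $\model_{\res_1}$ is toral relatively hyperbolic (by induction), the quasiconvex enclosures needed in the construction of the expansion $\widehat{\Delta}$ exist by Corollary~\ref{c:Enclose}. Now apply Proposition~\ref{p:Is a model} to conclude that $(\pi_1\widehat{\Delta},\mc{D}_{\widehat{\Delta}})$ realizes the universal property that defines $\model_{\res_0}$; this establishes (1). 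Claim (3) is then Lemma~\ref{l:Model is TRH}, and claim (2) is Proposition~\ref{p:Map is injection}. In the freely decomposable case $L_0 = H_1\ast\cdots\ast H_r\ast F_k$, apply the inductive construction to each freely indecomposable factor $H_i$ with the restricted resolution $\res_{0,i}$ to obtain models $\model_{\res_{0,i}}$ equipped with injections $\eta_{0,i}\co H_i\hookrightarrow \model_{\res_{0,i}}$; then set
\[
\model_{\res_0} = \model_{\res_{0,1}}\ast\cdots\ast\model_{\res_{0,r}}\ast F_k
\]
and let $\eta_0$ be the free product of the $\eta_{0,i}$ together with the identity on $F_k$. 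Injectivity of $\eta_0$ is immediate, existence follows since free products commute with colimits in $\catCSA$ (restricted appropriately to the freely indecomposable factors), and toral relative hyperbolicity follows from part (1) of Dahmani's Combination Theorem~\ref{t:DahComb} applied to the trivial graph of groups with vertex groups the toral relatively hyperbolic factors and free group $F_k$.

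The main obstacle is verifying that the inductive hypothesis feeds cleanly into Proposition~\ref{p:Is a model}: we need strictness of $\rho=\eta_1\circ\lambda_0$ (which relies both on strictness of $\lambda_0$ and on injectivity of $\eta_1$ on envelopes of rigid vertices, not just on $L_1$) and we need to know that the relative models $\rmodel_{\res_1}(\Delta_v)$ really are quasiconvex enclosures inside $\model_{\res_1}$, so that the construction of $\widehat{\Delta}$ is well-defined and matches the universal-property definition of $\model_{\res_0}$ from Section~\ref{s:Defn models}. Both of these points should be unpacked carefully, as the rest of the argument is then an assembly of the already-proved Lemma~\ref{l:Model is TRH}, Proposition~\ref{p:Map is injection}, and Proposition~\ref{p:Is a model}.
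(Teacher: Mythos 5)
Your proposal is correct and follows essentially the same route as the paper: reverse induction on the length of the resolution, base case via Proposition~\ref{p:res of length 0}, reduction to the freely indecomposable case, and then assembly of Proposition~\ref{p:Is a model}, Lemma~\ref{l:Model is TRH}, and Proposition~\ref{p:Map is injection} applied to the expansion $\widehat{\Delta}$ of the canonical primary JSJ with respect to $\rho=\eta_1\circ\lambda_0$. The points you flag as needing care (strictness of $\rho$ from injectivity of $\eta_1$, and the identification of the relative models with quasiconvex enclosures) are exactly the ones the paper disposes of in the same way, so no gap remains.
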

\begin{proof}
We have already observed in Proposition \ref{p:res of length 0} that, when $\res_0$ has length $0$ and $L_0=\Gamma$, the model for $\Gamma$, with respect to any collection of distinguished subgroups, is itself $\Gamma$.  Therefore, we may assume that $\res_0$ is of length at least one.  We further assume by induction that $\model_{\res_1}$ exists and satisfies the three properties in the statement of the theorem.

Also, it is clear that the theorem is stable under passing to free products, so
we may assume that $L_0$ is freely indecomposable.

Let $\Delta$ be the canonical primary JSJ of $L_0$, and recall that $\Delta$ is JSJ--like.  Let $\lambda_0 \co L_0 \to L_1$ be the first map from $\res_0$ and $\eta_1 \co L_1 \to \model_{\res_1}$ be the natural map from $L_1$ to its model.  By induction, we know that $\eta_1$ is injective, so $\rho = \eta_1 \circ \lambda_0$ is a strict map.  Let $\wh{\Delta}$ be the expansion of $\Delta$ with respect to $\rho$.

It follows from Proposition \ref{p:Is a model} that $\pi_1\wh{\Delta}$ is the model of $L_0$ with respect to $\res_0$.  The other two properties in the statement of the theorem follow immediately from Lemma \ref{l:Model is TRH} and Proposition \ref{p:Map is injection}.
\end{proof}

\section{Further properties of model $\Gamma$--limit groups}
\label{s:Model Properties}

The purpose of this section is to prove further properties of models, which are summarized in the next result.  In order to state the result, we adopt the following notation.  Let  $L$ be a $\Gamma$--limit group and $\res$ a strict resolution of $L$.  Let $\Delta$ be the canonical primary JSJ decomposition of $L$.  Let $\lambda \co L \onto L_1$ be the first map in $\res$.  Let $\model_{\res}$ be the model of $L$ with respect to $\res$, let $\mu \co \model_{\res} \to \model_{\res_1}$ be the associated map between models and let $\eta\co L\hookrightarrow\model_\res$ be the canonical inclusion.  Let $\eta_1 \co L_1 \into \model_{\res_1}$ be the canonical inclusion and let $\widehat{\Delta}$ be the expansion of $\Delta$ with respect to $\eta_1 \circ \lambda$, so that $\model_{\res} = \pi_1 \wh\Delta$.

\begin{theorem}\label{t:Properties of models}
Let $L$ be a freely indecomposable limit group with a strict resolution $\res$ and let $\model_{\res}$ be the associated model for $M$.
\begin{enumerate}
\item $\model_{\res}$ is freely indecomposable. 
\item $\widehat{\Delta}$ is the primary JSJ decomposition for $\model_{\res}$.
\item The  map $\mu \co \model_{\res} \to \model_{\res_1}$ is strict.
\item The model $\model_\res$ is a $\Gamma$--limit group.
\item There is a canonical homomorphism $\Phi\co\Mod(L)\to\Mod(\model_\res)$ that intertwines the injection $\eta$; that is, for $\alpha\in\Mod(L)$ and $g\in L$, we have that $\eta(\alpha(g))=\Phi(\alpha)(\eta(g))$.
\end{enumerate}
\end{theorem}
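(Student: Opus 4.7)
The plan is to prove the five claims of Theorem \ref{t:Properties of models} in order, relying on the explicit description of $\model_\res = \pi_1\widehat{\Delta}$ from the previous section. First I would prove (1) by contradiction: if $\model_\res$ admitted a non-trivial free splitting with Bass--Serre tree $T$, then every edge group of $\widehat{\Delta}$ (being infinite abelian) is elliptic in $T$, every abelian vertex group is elliptic, each rigid vertex $\widehat{\Delta}_u = \rmodel(\Delta_u)$ admits no essential abelian splitting relative to its adjacent edge groups by Corollary \ref{c:Enclose} (so in particular no non-trivial free splitting rel them), and each socket vertex is one-ended. Thus every vertex group of $\widehat{\Delta}$ is elliptic in $T$, forcing $\model_\res$ itself to be elliptic, a contradiction. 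For (2) I would check the four conditions of Proposition \ref{p:JSJ props}: the classification of vertices is built into the construction; bipartiteness is inherited from $\Delta$ since the equivalence relation identifies only edges and does not cross parities; each edge group $\widehat{\Delta}_{[e]} = Z_{\widehat{\Delta}_y}(\eta_y(\Delta_e))$ is maximal abelian by CSA; and $2$--acylindricity follows from the malnormality of the family of incident edge groups at each non-abelian vertex combined with the bipartite structure. I would also verify directly that the relative models are rigid in the sense of Definition \ref{d:rigid} and that sockets in $\widehat{\Delta}$ are maximal among sockets of $\model_\res$.

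For (3), strictness of $\mu\co\model_\res\to\model_{\res_1}$ with respect to $\widehat{\Delta}$ requires the four items of Definition \ref{d:strict}. On each rigid vertex group $\widehat{\Delta}_u$, the map $\mu$ is by construction the inclusion $\rmodel(\Delta_u) \hookrightarrow \model_{\res_1}$, injective on $\widehat{\Delta}_u$; injectivity on the envelope reduces to injectivity on the peripheral subgroups of adjacent abelian vertices, which follows from the universal property of $\widehat{\Delta}_{[w]}$ as an abelianization of $\pi_1\Lambda_{[w]}$. For a socket $\widehat{\Delta}_v$, its image contains $\rho(\Delta_v)$, which is nonabelian by strictness of $\rho = \eta_1 \circ \lambda$. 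Injectivity on edge groups incident at sockets and on peripheral subgroups of abelian vertices then follows from the construction. Item (4) is a straightforward induction on the length of $\res$ with base case Proposition \ref{p:res of length 0}: by induction $\model_{\res_1}$ admits a strict resolution to $\Gamma$, and prepending $\mu$ (strict by (3)) yields a strict resolution of $\model_\res$, so by Theorem \ref{t:strict res implies limit group} $\model_\res$ is a $\Gamma$--limit group.

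For (5), I would define $\Phi$ separately on each of the three families of generators of $\Mod(L)$ from Definition \ref{d:Gen Dehn twist and modular group}. A Dehn twist in a one-edge splitting of $L$ obtained by collapsing all but one edge of $\Delta$ becomes a Dehn twist in the analogous one-edge splitting of $\model_\res$ obtained by collapsing $\widehat{\Delta}$, with twisting element $\eta(z)$ now lying in the enlarged edge group $\widehat{\Delta}_{[e]}$. A Dehn twist along a two-sided simple closed curve in a socket vertex $\Delta_v$ extends naturally to a Dehn twist along the same curve in the enlarged socket $\widehat{\Delta}_v$. For an automorphism $\alpha$ acting on an abelian vertex $\Delta_w$ and fixing $\overline{P}(\Delta_w)$, I would extend $\alpha$ to an automorphism of $\widehat{\Delta}_{[w]}$ that acts as $\alpha$ on the image of $\Delta_w$ and trivially on each other $\Delta_{w_k}$ and each edge-vertex group $\widehat{\Delta}_{[e_i]}$. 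Well-definedness on the abelianization of $\pi_1\Lambda_{[w]}$ holds because $\alpha$ fixes each edge inclusion $\Delta_{e_{i,j}} \hookrightarrow \Delta_w$ (contained in $\overline{P}(\Delta_w)$), preserving the defining relations. The intertwining identity $\eta \circ \alpha = \Phi(\alpha) \circ \eta$ holds by construction on each vertex and edge group of $\Delta$, hence on all of $L$; that $\Phi$ descends to a homomorphism at the outer automorphism level follows from the observation that inner automorphisms of $L$ induce inner automorphisms of $\model_\res$ under this extension.

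The main obstacle is (5), specifically ensuring that the extension across an equivalence class $[w]$ is well-defined on $\widehat{\Delta}_{[w]}$ and that these extensions assemble over the different generators into a true homomorphism $\Phi\co\Mod(L)\to\Mod(\model_\res)$ satisfying the intertwining identity, rather than a mere set-theoretic map into $\Out(\model_\res)$. A subsidiary difficulty arises in (3), where injectivity of $\mu$ on envelopes of rigid vertices requires a careful understanding of $\widehat{\Delta}_{[w]}$ as an abelianized pushout in $\catCSA$.
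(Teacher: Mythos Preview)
Your outline for parts (1), (2), and (4) is broadly in line with the paper's approach, though the paper invokes \cite[Theorem 18]{W2012} for (1) rather than arguing ellipticity of vertex groups directly.

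There is a genuine gap in your treatment of (3). You write that ``injectivity on the envelope reduces to injectivity on the peripheral subgroups of adjacent abelian vertices.'' This is false as stated: the envelope $E(\widehat{\Delta}_u)$ is generated by $\widehat{\Delta}_u$ together with the adjacent peripheral subgroups, and injectivity of $\mu$ on each of these generating subgroups does not by itself yield injectivity on the group they generate. The paper's argument here (Proposition~\ref{p: Mu is strict}) is substantially more delicate: one considers the image $N=\mu(E)$ inside $\model_{\res_1}$, invokes the strong-accessibility hierarchy $\mathcal{T}_{N,\mu(\overline{\mc{C}}_v)}$ to obtain a chain $N_n\subseteq\cdots\subseteq N_0=N$ and associated trees $T_i$, constructs equivariant maps $q_i\co S_i\to T_i$ from the minimal invariant subtrees $S_i$ of the Bass--Serre tree of $\widehat{\Delta}$, and proves by induction that none of the $q_i$ factor through a fold (using malnormality of the incident edge family at the central vertex). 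This is the substantive content of (3); your proposal does not supply it, and you flag it only as a ``subsidiary difficulty.''

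Your approach to (5) also differs from the paper's in a way that creates extra work you do not complete. You define $\Phi$ on generators of $\Mod(L)$ and then need to check that relations among generalized Dehn twists are respected---which you do not do, and which you correctly identify as the main obstacle. The paper sidesteps this entirely: $\Phi(\alpha)=\widehat{\alpha}$ is defined for an \emph{arbitrary} $\alpha\in\Mod(L)$ via the universal property of $\model_\res$ as a colimit in $\catCSA$ (after suitably conjugating the structure maps $\rmodel(\Delta_v)\to\model_\res$). Uniqueness in the universal property then gives that $\Phi$ is a homomorphism for free, and the intertwining identity is immediate from the commutative diagram. Only \emph{afterwards} does the paper check that $\widehat{\alpha}\in\Mod(\model_\res)$ by examining the three types of generators---which is close to what you do, but at that stage it is merely verifying the image lands in the right subgroup, not establishing well-definedness of a homomorphism.
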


The proof of this theorem is spread out over a variety of results in this section.  The reader should note the key implication:  the universal property of $\model_\res$ allows one to take a factorization of a homomorphisms $f\co L \to \Gamma$ through $\res$ and induce a factorization of a homomorphism $\wh f \co \model_\res \to \Gamma$ through the induced resolution $\wh\res$ of $\model_\res$.  {\em This is the key point of the construction of models.}

We start with item (1), which is a consequence of a standard result that characterizes when graphs of groups admit free splittings.

\begin{lemma}
With the above notation, $\model_{\res}$ is freely indecomposable.
\end{lemma}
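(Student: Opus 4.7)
The plan is to show that the expansion $\widehat{\Delta}$ satisfies two standard Bass--Serre-theoretic conditions that together imply free indecomposability of $\pi_1\widehat{\Delta}$:
\begin{enumerate}
\item[(i)] every edge group of $\widehat{\Delta}$ is nontrivial, and
\item[(ii)] every vertex group of $\widehat{\Delta}$ is freely indecomposable relative to the family of its incident edge groups.
\end{enumerate}
The standard consequence is that any free splitting $\pi_1\widehat{\Delta} = A \ast B$ must be trivial: on the resulting Bass--Serre tree $T$ (with trivial edge stabilizers), the abelian edge groups of $\widehat{\Delta}$ must be elliptic -- either directly because a non-cyclic abelian subgroup of a torsion-free group cannot act without a global fixed point on such a $T$, or because a cyclic edge group is maximal abelian in the adjacent nonabelian vertex group and its hyperbolicity would force that vertex group to have a non-trivial free splitting relative to the remaining incident edges, contradicting (ii); then (ii) applied to each vertex group forces it to be elliptic in $T$, and (i) together with connectedness of the underlying graph forces all of $\pi_1\widehat{\Delta}$ to be elliptic, a contradiction.

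Condition (i) is immediate from the construction: every edge group of $\widehat{\Delta}$ is either the centralizer in a nonabelian vertex group of the nontrivial image $\eta_y(\Delta_e)$ of an edge group of $\Delta$, or is inherited directly from an edge of $\Delta$; in either case it is nontrivial.

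For condition (ii) I would examine the three vertex types in turn. An abelian vertex group $\widehat{\Delta}_{[w]}$ is abelian by construction, and any nontrivial abelian group is freely indecomposable. A socket vertex group $\widehat{\Delta}_v$ is obtained from a compact surface group by adjoining roots at boundary elements; since compact surface groups are freely indecomposable relative to the subgroups generated by their boundary components, the socket is freely indecomposable relative to the cyclic subgroups generated by the adjoined roots, which are exactly the incident edges of $\widehat{\Delta}$ at $v$. A rigid vertex group $\widehat{\Delta}_u$ is, by construction, the quasi-convex enclosure in $\model_{\res_1}$ of $\rho(\Delta_u)$ relative to the images of the incident edge groups of $\Delta$; by Corollary~\ref{c:Enclose} it admits no essential abelian splitting relative to these images, and since free splittings of torsion-free groups are essential abelian splittings, $\widehat{\Delta}_u$ has no free splitting relative to them. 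Since the actual incident edges of $\widehat{\Delta}$ at $u$ merely enlarge these images (they are their maximal-abelian centralizers), free indecomposability relative to the larger subgroups follows.

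The main point requiring care is the ellipticity step in the Bass--Serre argument above, where a cyclic edge group of $\widehat{\Delta}$ could in principle act hyperbolically on $T$; this is handled as indicated, by exploiting that such an edge group is maximal abelian in the adjacent nonabelian vertex group and bringing (ii) to bear.
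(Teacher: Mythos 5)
Your verification of the local conditions is fine and parallels what the paper's proof uses (rigid vertices via the defining property of the quasiconvex enclosure, Corollary~\ref{c:Enclose}; sockets via surfaces; abelian vertices trivially). The gap is in the global step. The implication ``nontrivial edge groups plus vertex groups freely indecomposable relative to their incident edge groups implies $\pi_1\wh{\Delta}$ is freely indecomposable'' is \emph{not} a standard Bass--Serre fact: in a putative free splitting of $\pi_1\wh{\Delta}$ with Bass--Serre tree $T$, a cyclic edge group of $\wh{\Delta}$ may act hyperbolically on $T$, and this is exactly the case your sketch does not handle. Your proposed fix fails twice over. First, no argument is given for why hyperbolicity of $\wh{\Delta}_e$ on $T$ would force the adjacent vertex group to split freely relative to the \emph{remaining} incident edges: the induced action of the vertex group on $T$ does give a free splitting of it, but there is no reason the other incident edge groups are elliptic there. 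Second, even granting such a splitting, it would not contradict your condition (ii): that condition only excludes free splittings of the vertex group in which \emph{all} incident edge groups are elliptic, and a splitting in which $\wh{\Delta}_e$ (hence its nontrivial subgroup $\eta_u(\Delta_e)$) is hyperbolic is relative only to a proper subfamily, so neither (ii) nor the defining property of the quasiconvex enclosure is violated.

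What is needed at this point is precisely the Shenitzer-type theorem, Theorem~\ref{thm:Shenitzer} (\cite[Theorem 18]{W2012}): for a graph of groups with \emph{infinite cyclic} edge groups, the fundamental group is one-ended if and only if every vertex group is freely indecomposable relative to its incident edge groups. This is a genuine theorem, not a consequence of naive ellipticity considerations, and its content is exactly the missing conclusion that \emph{some} vertex group splits freely relative to \emph{all} of its incident edge groups --- which is what your local conditions do contradict. The paper's proof proceeds by first deleting the non-cyclic abelian edges of $\wh{\Delta}$ (those edge groups are automatically elliptic in $T$, as you correctly note), observing that some resulting component has fundamental group acting on $T$ without a global fixed point, and then applying Theorem~\ref{thm:Shenitzer} to that component; the resulting free splitting of a vertex group relative to its incident cyclic edges is automatically also relative to the incident non-cyclic abelian edges (non-cyclic abelian groups do not split freely), contradicting the construction of $\wh{\Delta}$. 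If you replace the hand-waved step in your sketch by an appeal to that theorem, together with the preliminary reduction along non-cyclic abelian edges, your argument becomes essentially the paper's.
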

\begin{proof}
For a contradiction, we suppose that $\model_{\res}$ acts on a tree $T$ with trivial edge stabilizers and without a global fixed point.  Let $\wh{\Delta}_1,\ldots,\wh{\Delta}_k$ be the components of $\wh{\Delta}$ created when we delete all non-cyclic abelian edge groups.  For some $i$, $\pi_1\wh{\Delta}_i$ acts on $T$ without a global fixed point.  By \cite[Theorem 18]{W2012}, there is a vertex $v$ of $\wh\Delta_i$ (and hence of $\wh\Delta$) whose vertex group admits a non-trivial free splitting relative to the incident cyclic edge groups. However, non-cyclic abelian groups do not split freely, and so this splitting is also relative to the incident non-cyclic abelian edges. But this contradicts the definition of $\wh{\Delta}$.
\end{proof}

The next result implies item (2).

\begin{proposition} \label{p:rigid}
If $\Delta_u$ is a rigid vertex group of $\Delta$ {(in the sense of Definition \ref{p:rigid})} then the corresponding vertex group $\widehat{\Delta}_u$ of $\widehat{\Delta}$ is {rigid}.
\end{proposition}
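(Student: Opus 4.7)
The plan is to argue by contradiction, exploiting the defining property of $\widehat{\Delta}_u$ as a quasi-convex enclosure.  Writing $\mc{C}_u$ for the set of edge groups of $\Delta$ incident at $u$, recall that $\widehat{\Delta}_u$ is the quasi-convex enclosure in $M=\model_{\res_1}$ of $\rho(\Delta_u)$ relative to $\rho(\mc{C}_u)$.  By Corollary \ref{c:Enclose}, once its hypothesis is verified, $\widehat{\Delta}_u$ is characterized as the maximal subgroup of $M$ containing $\rho(\Delta_u)$ that admits no non-trivial essential abelian splitting relative to $\rho(\mc{C}_u)$.

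Granted this characterization, the argument is formal.  Suppose for contradiction that $\widehat{\Delta}_u$ admits a non-trivial primary splitting $\Lambda$ relative to its incident edge groups $\{\widehat{\Delta}_{[e]}\}$ in $\widehat{\Delta}$ together with the non-cyclic abelian subgroups.  By construction, $\widehat{\Delta}_{[e]}=Z_{\widehat{\Delta}_u}(\rho(\Delta_e))\supseteq\rho(\Delta_e)$, so the ellipticity of $\widehat{\Delta}_{[e]}$ in $\Lambda$ forces the ellipticity of every $\rho(\Delta_e)\in\rho(\mc{C}_u)$.  Since a primary splitting is in particular essential, $\Lambda$ is then a non-trivial essential abelian splitting of $\widehat{\Delta}_u$ relative to $\rho(\mc{C}_u)$, directly contradicting the maximality in the preceding paragraph.

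The main obstacle is verifying the hypothesis of Corollary \ref{c:Enclose}: that $\rho(\Delta_u)$ admits no abelian splitting relative to $\rho(\mc{C}_u)$.  Since $\rho|_{\Delta_u}$ is injective by strictness, this reduces to showing $\Delta_u$ admits no abelian splitting relative to $\mc{C}_u$.  Rigidity of $\Delta_u$ in $\Delta$ forbids \emph{primary} splittings, so the task is to promote any abelian splitting of $\Delta_u$ in which $\mc{C}_u$ is elliptic to a primary one.  For this, I would lean on the $2$-acylindricity and bipartite structure of $\Delta$ (Proposition \ref{p:JSJ props}) to argue that every non-cyclic abelian subgroup $A\le\Delta_u$ is contained in some element of $\mc{C}_u$: pass to the maximal abelian $A'\le L$ containing $A$, note that $A'$ is elliptic in $\Delta$ by the primary property, so $A'$ fixes some vertex $y$ of the Bass--Serre tree, and use $2$-acylindricity together with bipartiteness to force the distance between $y$ and (the vertex corresponding to) $u$ to be one, placing $A\subseteq A'\cap\Delta_u$ inside the corresponding edge stabilizer.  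Consequently, ellipticity of $\mc{C}_u$ in any abelian splitting of $\Delta_u$ automatically yields ellipticity of all non-cyclic abelians, upgrading the splitting to a primary one, which rigidity then rules out.
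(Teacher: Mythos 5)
Your first two paragraphs are correct and rest on exactly the same key fact as the paper's proof: the defining property of the quasiconvex enclosure, namely that $\widehat{\Delta}_u$ admits no nontrivial essential abelian splitting relative to $\rho(\mc{C}_u)$, combined with $\rho(\Delta_e)\subseteq\widehat{\Delta}_{[e]}$. The only difference in route is cosmetic: the paper runs the argument through actions on trees (it takes a primary $\model_{\res}$--tree $T$, observes that $\Delta_u$ is elliptic by rigidity in $\Delta$, and concludes that $\widehat{\Delta}_u$ must also be elliptic since otherwise its minimal subtree would yield a forbidden essential abelian splitting relative to the images of the incident edge groups), whereas you apply the same property directly to a hypothetical primary splitting of $\widehat{\Delta}_u$ itself. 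Your version is, if anything, a more direct match for Definition \ref{d:rigid}.

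The problem is your third paragraph. The claim that every non-cyclic abelian subgroup $A\le\Delta_u$ is contained in some element of $\mc{C}_u$ is false in general: the maximal abelian subgroup $A'$ containing $A$ may fix the vertex stabilized by $\Delta_u$ itself (i.e.\ $A'\le\Delta_u$), in which case the distance you are trying to bound is zero and $2$--acylindricity gives no information. Rigid vertex groups of the primary JSJ can perfectly well contain non-cyclic abelian subgroups unrelated to the incident edge groups --- this is precisely why the whole theory is set up relative to $\mc{A}_{\nc}$. (A second, smaller, issue: even with all non-cyclic abelians elliptic, an abelian splitting need not be essential, so the promotion to a primary splitting requires an additional root-pulling step.) Fortunately, the hypothesis you are labouring to verify is stronger than what the construction actually uses: the quasiconvex enclosure of Definition \ref{d:Enclose} is defined via the hierarchy under the assumption that $H$ admits no nontrivial \emph{primary} splitting relative to $\mc{D}$ (see the paragraph preceding that definition), and that is exactly what rigidity of $\Delta_u$ in the sense of Definition \ref{d:rigid}, together with the injectivity of $\rho|_{\Delta_u}$, hands you for free --- a primary splitting already has $\mc{A}_{\nc}$ elliptic, so ``no primary splitting rel $\mc{C}_u\cup\mc{A}_{\nc}$'' is the same as ``no primary splitting rel $\mc{C}_u$''. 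So you should delete the false lemma and simply cite rigidity; the hypothesis of Corollary \ref{c:Enclose} as literally printed is an overstatement that the paper itself does not verify when it invokes the corollary to define the relative model.
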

\begin{proof}
Consider an $\model_{\res}$--tree $T$ which has abelian edge stabilizers and is so that every noncyclic abelian subgroup is elliptic.  Consider the induced $L$--action coming from the canonical embedding of $L$ into $\model_{\res}$.  This is an action of the same type, and $\Delta_u$ is a rigid vertex group $\Delta$, so $\Delta_u$ fixes a point in $T$.  However, the group $\widehat{\Delta}_u$ was chosen so that it did not admit any essential abelian splittings relative to (the images of) the edge groups of $\Delta$ adjacent to $\Delta_u$.    Thus $\widehat{\Delta}_u$ fixes a point of $T$, as required.
\end{proof}

\begin{corollary}
The graph of groups $\widehat{\Delta}$ is the canonical primary JSJ decomposition of $\model_{\res}$.
\end{corollary}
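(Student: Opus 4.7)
The plan is to apply the converse direction of Proposition \ref{p:JSJ props}, which asserts that any primary splitting of a $\Gamma$--limit group satisfying the four structural properties listed there is the canonical primary JSJ.  Thus it suffices to verify that $\widehat{\Delta}$ is a primary splitting of $\model_{\res}$ satisfying those properties.

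First I would confirm that $\widehat{\Delta}$ is a primary splitting.  The edge groups were defined as centralizers $Z_{\widehat{\Delta}_y}(\eta_y(\Delta_e))$ in toral relatively hyperbolic (or socket) vertex groups, so they are abelian, and they are root-closed by commutative transitivity together with the fact that $\model_\res$ is CSA (Lemma \ref{l:Model is TRH}).  That every noncyclic abelian subgroup of $\model_{\res}$ is elliptic in $\widehat{\Delta}$ follows from the fact that $\model_\res$ is toral relatively hyperbolic (Theorem \ref{t:Model props}): the action on the Bass--Serre tree will be shown to be $2$-acylindrical, and any noncyclic abelian subgroup lies in a unique maximal abelian subgroup, which must then fix a vertex.

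Next I would verify the four conditions of Proposition \ref{p:JSJ props}.  For condition (1), the rigid vertex groups $\widehat{\Delta}_u$ are rigid by Proposition \ref{p:rigid}; the abelian vertex groups are abelian by construction; and the socket vertex groups are maximal sockets, since by the construction all roots of boundary elements existing in $\model_{\res_1}$ were already attached, and by CSA of $\model_{\res}$ any root of a boundary element in $\model_{\res}$ lies in the maximal abelian subgroup containing that boundary element, which is already a subgroup of $\widehat{\Delta}_v$.  Condition (2), the bipartite structure with abelian vertices adjacent only to nonabelian ones, is immediate from the construction.  Condition (3), that each edge group is maximal abelian in an adjacent nonabelian vertex group, follows directly from the definition of edge groups as centralizers together with commutative transitivity.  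Condition (4), $2$-acylindricity, then follows from (2) and (3) together with CSA, exactly as in the forward direction of Proposition \ref{p:JSJ props}.

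The main obstacle I anticipate is the verification that the socket vertex groups remain \emph{maximal} sockets in $\model_{\res}$ itself and not merely in $\model_{\res_1}$, since a priori the extra identifications and abelianizations performed in the construction of $\widehat{\Delta}$ could create new roots of boundary elements elsewhere in $\model_\res$.  The resolution rests on the CSA property and the careful bookkeeping of centralizers: any new root of a boundary element would lie in the maximal abelian subgroup containing the edge group, which by construction of the edge groups as centralizers inside $\widehat{\Delta}_v$ is already contained in the socket.  Once this and the parallel point about noncyclic abelian subgroups being elliptic are established, the result reduces to a bookkeeping check against Proposition \ref{p:JSJ props}.
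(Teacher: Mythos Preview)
Your overall strategy---verify the four conditions of Proposition~\ref{p:JSJ props} and invoke its converse---is exactly the paper's approach, and your treatment of conditions (2), (3), (4) and of rigidity via Proposition~\ref{p:rigid} is fine.

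The gap is in your argument for maximality of the socket vertices. You argue that all roots of boundary elements already lie in $\widehat{\Delta}_v$, but maximality of a socket is not only a question of roots: a socket fails to be maximal when it can be enlarged by absorbing an adjacent piece that is itself a socket (built on a pair of pants, say) glued along a boundary cycle. Your CSA argument about centralizers of boundary elements does not rule this out. The paper handles this by contradiction: if $\widehat{\Delta}_v$ were not maximal, there would be an adjacent abelian vertex of valence~$2$, isomorphic to both incident edge groups, with a socket on the other side; since the flexible sockets of $\widehat{\Delta}$ are exactly the $\widehat{\Delta}_v$, that other socket would have to be one of the rigid $\widehat{\Delta}_u$, and tracing back through the construction forces the corresponding $\Delta_u$ in the original JSJ of $L$ to witness that $\Delta_v$ was already non-maximal there---contradicting the hypothesis that $\Delta$ is the canonical primary JSJ of $L$. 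You should replace your root argument with this structural one. (Also note that Proposition~\ref{p:JSJ props} requires $\model_\res$ to be freely indecomposable, which you should cite explicitly from the preceding lemma.)
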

\begin{proof}
{  We have proved that $\model_{\res}$ is freely indecomposable.  Therefore, Proposition \ref{p:JSJ props} implies that in order to show that $\widehat{\Delta}_{\res}$ is a canonical primary JSJ decomposition we have to verify Conditions (1)--(4) from that result.  Proposition \ref{p:rigid} shows that the vertices labelled `rigid' are indeed rigid.  Suppose that $\Delta_v$ is a (maximal) socket group of $\Delta$, with corresponding vertex group $\widehat{\Delta}_v$ of $\widehat{\Delta}$.  Clearly $\widehat{\Delta}_v$ is a socket, and we claim that it is a maximal socket. {  Suppose not.  Then there is an abelian vertex group adjacent to $\widehat{\Delta}_v$ of valence $2$ which is isomorphic to each of its edge groups, and the other nonabelian vertex group adjacent to this abelian vertex group is also a socket.  By construction, it cannot be a flexible socket, so it must be a rigid socket.  However, in this case, it arose from a vertex group of $\Delta$ which contained the boundary curve from $\widehat{\Delta}_v$, and also powers of the other two boundary curves.  This would means that the socket group $\Delta_v$ was not maximal, contrary to assumption.}

It is clear from the construction that $\widehat{\Delta}$ is bipartite with the required structure.  Moreover, by construction, any edge group adjacent to a nonabelian vertex group $\Delta_v$ is maximal abelian in $\Delta_v$.  It follows that $\widehat{\Delta}$ is $2$--acylindrical and so, by Proposition \ref{p:JSJ props}, $\widehat{\Delta}$ is the canonical primary JSJ decomposition of $\model_{\res}$, as required.
}
\end{proof}

We turn our attention to item (3).

\begin{proposition}\label{p: Mu is strict}
The map $\mu$ is strict.
\end{proposition}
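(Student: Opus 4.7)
We verify the four conditions of Definition \ref{d:strict} for $\mu\co\model_\res\to\model_{\res_1}$ with respect to $\widehat{\Delta}$.  Since we have already shown that $\model_\res$ is freely indecomposable and that $\widehat{\Delta}$ is its canonical primary JSJ decomposition, there is only one free factor to consider.  Throughout, the explicit description of $\mu$ from the proof of Proposition \ref{p:Is a model} will be used: on a rigid vertex group $\widehat{\Delta}_u$ the map $\mu$ is literally the inclusion $\widehat{\Delta}_u\hookrightarrow\model_{\res_1}$; on a socket vertex $\widehat{\Delta}_v$ it is the natural inclusion obtained by attaching to $\Delta_v$ the relevant roots in $\model_{\res_1}$; and on an abelian vertex group $\widehat{\Delta}_{[w]}$ it is the map induced by universal property from the data $\eta_1\circ\lambda\co\Delta_{w_k}\to\model_{\res_1}$ and the inclusions $\widehat{\Delta}_{[e_i]}\hookrightarrow\model_{\res_1}$.

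The easy conditions come first.  For condition (2), a socket vertex group $\widehat{\Delta}_v$ contains $\Delta_v$ as a subgroup, and $\mu|_{\Delta_v}=\eta_1\circ\lambda|_{\Delta_v}$; strictness of $\lambda$ makes $\lambda(\Delta_v)$ nonabelian in $L_1$, and injectivity of $\eta_1$ then gives a nonabelian image in $\model_{\res_1}$.  For condition (3), the socket $\widehat{\Delta}_v$ embeds in $\model_{\res_1}$ by construction, so $\mu$ is injective on every boundary subgroup.  Part of condition (1) is also immediate: $\mu$ is by definition the inclusion on each rigid vertex group $\widehat{\Delta}_u$, so it is certainly injective there.

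The heart of the argument is condition (4), injectivity on peripheral subgroups $\overline{P}(\widehat{\Delta}_{[w]})$ of abelian vertices.  I would fix an abelian vertex $[w]$ and study the images in $\model_{\res_1}$ of the pieces of $\Lambda_{[w]}$.  The edge groups $\widehat{\Delta}_{[e_i]}$ are maximal abelian in their adjacent rigid vertex groups $\widehat{\Delta}_{y_i}\subseteq\model_{\res_1}$, and the vertex groups $\Delta_{w_k}$ map to abelian subgroups of $\model_{\res_1}$ via $\eta_1\circ\lambda$.  Because these subgroups all pairwise commute through the nontrivial edge groups $\Delta_{e_{i,j}}$ and $\model_{\res_1}$ is CSA (hence commutative transitive), they all lie in a single maximal abelian subgroup $M$ of $\model_{\res_1}$, which is free abelian.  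The map $\overline{P}(\widehat{\Delta}_{[w]})\to M$ is then a map between finitely generated free abelian groups which is injective on each of the generating pieces $\widehat{\Delta}_{[e_i]}$ (inclusions) and their intersections with the $\eta_1\circ\lambda(\Delta_{w_k})$ (injective by strictness of $\lambda$ applied to the peripheral subgroup $\overline{P}(\Delta_w)$ of the original abelian vertex of $\Delta$).  Lemma \ref{l:restricted pushout}, applied to the colimit presentation of $\overline{P}(\widehat{\Delta}_{[w]})$ via $\Lambda_{[w]}$ restricted to the pieces generating the peripheral subgroup, then gives injectivity.

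Finally, to upgrade condition (1) from $\widehat{\Delta}_u$ to its full envelope $E(\widehat{\Delta}_u)$, I would observe that the envelope is generated by $\widehat{\Delta}_u$ together with the peripheral subgroups $\overline{P}(\widehat{\Delta}_{[w_i]})$ across adjacent edges.  In $\model_{\res_1}$, each peripheral subgroup lies in a maximal abelian subgroup $M_i$ whose intersection with $\widehat{\Delta}_u$ is exactly the edge group $\widehat{\Delta}_{[e_i]}$ (by malnormality of maximal abelian subgroups in the toral relatively hyperbolic group $\model_{\res_1}$, and the fact that $\widehat{\Delta}_{[e_i]}$ is already maximal abelian in $\widehat{\Delta}_u$).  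Since in $\widehat{\Delta}$ these subgroups are amalgamated along exactly these edge groups, a normal-form argument in the graph-of-groups structure of $E(\widehat{\Delta}_u)$ (a tree of abelian and rigid pieces glued along malnormal abelian edges) shows that $\mu$ cannot identify distinct elements; equivalently, by acylindricity of $\widehat{\Delta}$ and the description of the images, the map on each piece and its intersections is injective, and Bass--Serre theory forces injectivity on the whole.  The main obstacle here, and the step I would expect to require the most care, is precisely the joint analysis needed for condition (4) and the envelope extension in condition (1), because it simultaneously requires tracking the graph-of-abelian-groups definition of $\widehat{\Delta}_{[w]}$ and controlling its image inside a single maximal abelian subgroup of $\model_{\res_1}$.
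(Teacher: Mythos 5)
Your treatment of the socket conditions and of injectivity on the peripheral subgroups $\overline{P}(\wh{\Delta}_{[w]})$ matches the paper's: the peripheral subgroup is a colimit of free abelian groups all landing (by commutative transitivity) in a single maximal abelian subgroup of $\model_{\res_1}$, and Lemma \ref{l:restricted pushout} gives injectivity. The gap is in the last step, injectivity of $\mu$ on the envelope $E=E(\wh{\Delta}_u)$. You correctly establish that each peripheral subgroup lies in a maximal abelian $M_i\le\model_{\res_1}$ with $M_i\cap\wh{\Delta}_u=\wh{\Delta}_{[e_i]}$, but the conclusion ``Bass--Serre theory forces injectivity on the whole'' does not follow: two subgroups $H_1,H_2$ of a group $G$ with $H_1\cap H_2=C$ need not generate $H_1\ast_C H_2$ inside $G$, and the acylindricity you invoke is a property of the action of $\pi_1\wh{\Delta}$ on \emph{its own} Bass--Serre tree --- the target $\model_{\res_1}$ does not act on that tree, so there is no normal form or ping-pong available from the data you have assembled. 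Some action of (a subgroup of) the target on a tree, compatible with the star decomposition of $E$, is genuinely needed here.

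The paper supplies exactly this missing ingredient via strong accessibility (Theorem \ref{t: Strong accessibility}): setting $N=\mu(E)\le\model_{\res_1}$, the hierarchy $\mc{T}_{N,\mu(\overline{\mc{C}}_v)}$ yields a chain $N_n\subseteq\cdots\subseteq N_0=N$ with abelian $N_i$--trees $T_i$, terminating in $N_n=\mu(\wh{\Delta}_u)$ (the quasiconvex enclosure). Pulling back to $E_i=\mu^{-1}(N_i)$ acting on subtrees $S_i$ of the star $S$, one defines equivariant maps $q_i\co S_i\to T_i$ and argues by induction that each $q_{i-1}$ does not factor through a fold: a folded edge at the central vertex would have non-abelian stabilizer by malnormality of the incident edge family, contradicting the abelian edge stabilizers of $T_{i-1}$. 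This is the step your proposal is missing, and without it the envelope case --- which is the heart of the proposition --- is not proved.
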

\begin{proof}
There are three conditions to check, for the different flavours of vertex groups of $\widehat{\Delta}$.
 
For socket vertices, we must show that the image under $\mu$ is nonabelian, and this is immediate from the construction, because the socket vertices of $L$ embed in the socket groups of $\wh\Delta$ and $\lambda \co L \to L_1$ is strict.

Suppose now that $\widehat{\Delta}_{[w]}$ is an abelian vertex group of $\wh{\Delta}$, and let $\overline{P}(\wh{\Delta}_{[w]})$ be the peripheral subgroup.  Then $\overline{P}(\wh{\Delta}_{[w]}{)}$ is a colimit of the peripheral groups $\overline{P}(\Delta_{w_k})$ and the incident edge groups $\wh{\Delta}_{[e_i]}$ with maps from the edge groups of $\Delta$ to each of these.  That $\mu$ is injective on $\overline{P}(\wh{\Delta}_{[w]})$ follows from the injectivity of the maps from $\overline{P}(\Delta_{w_k})$ and $\wh{\Delta}_{[e_i]}$.

So it remains to check that $\mu$ is injective on the envelopes of rigid vertices.  We therefore consider a rigid vertex $v$ and its envelope $E=E(\wh{\Delta}_v)$.  Note that the Bass--Serre tree $\wh{T}$ of $\wh{\Delta}$ induces a splitting of $E$: the subgroup $E$ acts cocompactly on its minimal invariant subtree $S$, and the quotient graph is a star, with a central vertex labelled $\wh{\Delta}_v$ and  the leaves labelled by the peripheral subgroups of the adjacent abelian vertices.  

For convenience we write $M=\model_{\res_1}$ and consider the subgroup $N=\mu(E)$.  The hierarchy $\mc{T}_{M,\mu(\overline{\mc{C}}_v)}$ induces a hierarchy $\mc{T}_{N,\mu(\overline{\mc{C}}_v)}$ which, as in the paragraph after Theorem \ref{t: Strong accessibility},  defines a chain of subgroups
\[
N_n\subseteq N_{n-1}\subseteq\ldots\subseteq N_0=N
\]
and abelian $N_i$--trees $T_i$ so that $N_{i+1}$ is the stabilizer of a unique vertex $u_i$ of $T_i$.  By definition, the restriction of $\mu$ to $\wh{\Delta}_v$ is an isomorphism $\wh{\Delta}_v\to N_n$.   Let $E_i=\mu^{-1}(N_i)$ and let $S_i$ be the minimal $E_i$--invariant subtree of $S$.  

It is not hard to see that the $N_i$ are all freely indecomposable, and hence that the $N_i$--trees $T_i$ induced from Grushko trees are trivial. We therefore remove these redundancies from this list in the obvious way, and henceforth assume that each $T_i$ is induced from a primary JSJ tree.

We define canonical equivariant maps $q_i\co S_i\to T_i$ by sending the vertex of $S_i$ stabilized by $\wh{\Delta}_v$ to $u_i$, and by sending a vertex $w$ of $S_i$ with abelian stabilizer to the unique vertex of $T_i$ whose stabilizer is the maximal abelian subgroup of $N_i$ containing $\mu(\mathrm{Stab}_{E_i}(w))$.

We now prove that $\mu\co E\to N$ is injective by induction.  It suffices to show that if $\mu|_{E_i}\co E_i\to N_i$ is injective then $q_{i-1}\co S_{i-1}\to T_{i-1}$ does not factor through a fold.  Indeed, the malnormality of the family of edge groups incident at the central vertex of $S_{i-1}$ shows that a folded edge has non-abelian stabilizer, which contradicts the fact that edge stabilizers in $T_{i-1}$ are abelian.   This completes the proof.
\end{proof}

\begin{remark}\label{rem: Envelope hierarchy}
Consider a rigid vertex $v$ of $\wh{\Delta}$.  The proof of Proposition \ref{p: Mu is strict} uses the hierarchy induced on the image $N$ of the envelope $E=E(\wh{\Delta}_v)$ under the map $\mu$, which the proof shows is injective on $E$. The hierarchy $\mc{T}_{N,\mu(\overline{\mc{C}}_v)}$ for $N$ therefore yields a hierarchy $\mathcal{T}_{E,\overline{\mc{C}}_v}$ for $E$, which we will make use of later.  As in the proof of the proposition, we may assume that every level of $\mathcal{T}_{E,\overline{\mc{C}}_v}$ is induced by a JSJ tree.

\end{remark}

Item (4) is an immediate consequence of Theorem \ref{t:strict res implies limit group} and Proposition \ref{p: Mu is strict}.

\begin{corollary}\label{p:Models are limit groups}
The group $\model_\res$ is a $\Gamma$--limit group.
\end{corollary}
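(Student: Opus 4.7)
The plan is to argue by reverse induction on the length of the resolution $\res$, using the earlier items of Theorem \ref{t:Properties of models} together with Theorem \ref{t:strict res implies limit group}.

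For the base case, I take $\res$ to be the trivial (length $0$) resolution on $\Gamma$ itself, and invoke Proposition \ref{p:res of length 0} to obtain $\model_\res \cong \Gamma$, which is a $\Gamma$--limit group.  For the inductive step, I assume that $\model_{\res_1}$ is a $\Gamma$--limit group, where $\res_1$ is the tail of $\res$ obtained by deleting the initial strict map $\lambda \co L_0 \to L_1$.  By Theorem \ref{t:strict res implies limit group} (together with the fact that $\model_{\res_1}$ is finitely generated, since it is finitely presented by Theorem \ref{t:Model props}(3)), there is a strict resolution
\[
\model_{\res_1} \to N_1 \to \cdots \to N_k,
\]
with $N_k$ admitting a strict map to $\Gamma$.

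I then prepend the map $\mu \co \model_\res \to \model_{\res_1}$ furnished by our construction, which is strict by Proposition \ref{p: Mu is strict}, to obtain the sequence
\[
\model_\res \stackrel{\mu}{\to} \model_{\res_1} \to N_1 \to \cdots \to N_k.
\]
Since each arrow is strict (with respect to the appropriate JSJ--like decomposition of its domain -- here I use items (1) and (2) of Theorem \ref{t:Properties of models} to identify the primary JSJ of $\model_\res$ with $\widehat\Delta$, so that strictness of $\mu$ is understood in the sense of Definition \ref{d:strict}), this is a strict resolution of $\model_\res$.  Applying the ``if'' direction of Theorem \ref{t:strict res implies limit group} (in the flexible form noted in Remark \ref{rem:different resolution defn}, which permits the terminal group merely to admit a strict map to $\Gamma$), we conclude that $\model_\res$ is a $\Gamma$--limit group.

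The only subtlety is bookkeeping: one must confirm that the JSJ--like splitting of $\model_\res$ with respect to which Proposition \ref{p: Mu is strict} establishes strictness is compatible with the framework of Definition \ref{d: Strict resolution}, and similarly that the Grushko decomposition is handled correctly.  Freely indecomposability of $\model_\res$ (Theorem \ref{t:Properties of models}(1)) and the identification of $\widehat\Delta$ as its canonical primary JSJ (Theorem \ref{t:Properties of models}(2)) make this routine.  In the freely decomposable case one simply applies the argument factor-by-factor and takes a free product of resolutions.
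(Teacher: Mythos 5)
Your argument is correct and is essentially the paper's: the paper deduces the corollary in one line from Proposition \ref{p: Mu is strict} (strictness of $\mu$) together with Theorem \ref{t:strict res implies limit group}, exactly as you do. The only cosmetic difference is that the paper implicitly uses the chain of models $\model_{\res_0}\to\model_{\res_1}\to\cdots$ itself as the strict resolution, whereas you re-extract an abstract strict resolution of $\model_{\res_1}$ from its being a $\Gamma$--limit group before prepending $\mu$; both are fine.
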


Before proving item (5), we first construct a map $\Phi\co\Mod(L)\to\Out(\model_\res)$.  Let $\alpha\in\Mod(L)$.  After suitably conjugating the natural maps $\rmodel_\res(\Delta_v)\to\model_\res$, we obtain a commutative diagram, and the universal property of $\model_\res$ guarantees a unique map $\wh{\alpha}\co\model_\res\to\model_\res$.  We set $\Phi(\alpha)=\wh{\alpha}$.

\begin{proposition}\label{prop: Hom of Mods}
The map $\Phi\co\Mod(L)\to\Out(\model_\res)$ is a homomorphism and, for $\alpha\in\Mod(L)$ and $g\in L$, we have that $\eta(\alpha(g))=\Phi(\alpha)(\eta(g))$.  Furthermore, its image lies in $\Mod(\model_\res)$.
\end{proposition}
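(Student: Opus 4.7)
The plan is to establish the three assertions in sequence: the intertwining identity, the homomorphism property, and the inclusion of the image in $\Mod(\model_\res)$. The first two should be essentially formal consequences of the universal property that defines $\Phi$, while the third requires a case analysis on the generators of $\Mod(L)$.

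The intertwining relation $\eta(\alpha(g))=\Phi(\alpha)(\eta(g))$ is built into the construction: $\Phi(\alpha)=\wh{\alpha}$ is defined via the universal property applied to the diagram obtained by replacing the natural map $L_0\to\model_\res$ with $\eta\circ\alpha$ and conjugating each map $\rmodel_\res(\Delta_v)\to\model_\res$ by an element that witnesses $\alpha|_{\Delta_v}$ as an inner automorphism. Commutativity of the defining diagram then gives $\wh{\alpha}\circ\eta=\eta\circ\alpha$ for some representative of the outer class. The homomorphism property follows because, given representatives of $\Phi(\alpha)$ and $\Phi(\beta)$, the composition $\Phi(\alpha)\circ\Phi(\beta)$ also factorizes the data defining $\Phi(\alpha\beta)$: on $\eta(L_0)$ it realizes $\eta\circ(\alpha\beta)$ by two applications of the intertwining identity, and on each $\rmodel_\res(\Delta_v)$ it agrees with the prescribed conjugation up to an inner automorphism. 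Uniqueness in the universal property forces $\Phi(\alpha)\circ\Phi(\beta)$ and $\Phi(\alpha\beta)$ to agree in $\Out(\model_\res)$.

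For the final statement, I would use that $\wh{\Delta}$ is the canonical primary JSJ of $\model_\res$ (proved earlier in this section), so $\Mod(\model_\res)$ is generated by the three families of generalized Dehn twists from Definition \ref{d:Gen Dehn twist and modular group} taken with respect to $\wh{\Delta}$. Since $\Mod(L)$ is generated by the corresponding twists in $\Delta$ and $\Phi$ is a homomorphism, it suffices to show each such generator maps to a modular automorphism. First, if $\alpha$ is a Dehn twist along an edge $e$ of $\Delta$ by $z\in Z(\Delta_e)$, then $\eta(z)\in \wh{\Delta}_{[e]}$, which is abelian (hence central in itself), and the intertwining identity forces $\Phi(\alpha)$ to act trivially on one side and by conjugation by $\eta(z)$ on the other side of the one-edge splitting of $\model_\res$ obtained by collapsing all edges of $\wh{\Delta}$ outside $[e]$; thus $\Phi(\alpha)$ is a Dehn twist in $\wh{\Delta}$. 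Second, if $\alpha$ is a Dehn twist along a two-sided simple closed curve $c$ in a socket vertex $\Delta_v$, then $\eta$ embeds $\Delta_v$ in the socket $\wh{\Delta}_v$, which has the same underlying surface with possibly additional roots attached to boundary components; the curve $c$ still defines a two-sided simple closed curve in $\wh{\Delta}_v$, and by uniqueness $\Phi(\alpha)$ is the Dehn twist along this curve. Third, if $\alpha$ restricts to an automorphism of an abelian vertex group $\Delta_w$ fixing $\overline{P}(\Delta_w)$ and is the identity elsewhere, then using the explicit description of $\wh{\Delta}_{[w]}$ as the abelianization of $\pi_1\Lambda_{[w]}$, the automorphism of $\Delta_w$ together with the identity on each incident centralizer $\wh{\Delta}_{[e_i]}$ defines an automorphism of $\pi_1\Lambda_{[w]}$ which descends to $\wh{\Delta}_{[w]}$ and fixes every edge group incident at $[w]$ in $\wh{\Delta}$.

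The main obstacle will be the third case. The subtlety is that the edge groups incident at the abelian vertex $[w]$ in $\wh{\Delta}$ are the centralizers $\wh{\Delta}_{[e_i]}$ in the rigid or socket vertex groups of $\model_{\res_1}$, and these can be strictly larger than the images of the edge groups of $\Delta$ incident at the various $w_k\in[w]$; one must therefore verify that fixing the peripheral subgroups in $L$ translates, after passing through the colimit and abelianization, into fixing every edge group incident in $\wh{\Delta}$, and this requires keeping careful track of what the construction of $\Lambda_{[w]}$ actually identifies.
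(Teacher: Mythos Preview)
Your treatment of the homomorphism and intertwining properties via the universal property is fine and matches the paper. Your handling of the socket case (Case 2 of Definition \ref{d:Gen Dehn twist and modular group}) is also the same as the paper's.

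The genuine gap is in your Case 1. Your claim that, for a Dehn twist $\delta_z$ along an edge $e$ of $\Delta$, the image $\Phi(\delta_z)$ is a single Dehn twist along $[e]$ in $\wh{\Delta}$ is in general \emph{false}. The problem is that the quotient map from the underlying graph of $\Delta$ to that of $\wh{\Delta}$ can identify several edges adjacent to a rigid vertex $y$, and this destroys the compatibility between the two one-edge splittings. Concretely, suppose $e_1=e$ and $e_2$ are equivalent edges at $y$, with abelian endpoints $w_1\neq w_2$; let $w_2$ carry a further edge $f$ to a rigid vertex $y'$, and let $w_1$ carry an edge $g$ to a rigid vertex $y''$. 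In the one-edge splitting of $\Delta$ along $e$, the vertex $y'$ lies on the $y$-side, so $\delta_z$ is the identity on $\Delta_{y'}$; the universal-property construction then forces $\Phi(\delta_z)$ to be the identity on $\wh{\Delta}_{y'}$. But in the one-edge splitting of $\wh{\Delta}$ along $[e]$, both $y'$ and $y''$ lie on the $[w]$-side, so a Dehn twist by $\eta(z)$ along $[e]$ would conjugate $\wh{\Delta}_{y'}$ by $\eta(z)$, which is not the identity. Thus $\Phi(\delta_z)$ acts as the identity on $\wh{\Delta}_{y'}$ and as conjugation by $\eta(z)$ on $\wh{\Delta}_{y''}$, and cannot be realised as a single Dehn twist in any one-edge collapse of $\wh{\Delta}$.

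The paper avoids this by treating Cases 1 and 3 simultaneously. Given the abelian vertex $[w]$ of $\wh{\Delta}$, one deletes the equivalence class $[w]$ from $\Delta$ (respectively, the vertex $[w]$ from $\wh{\Delta}$) and records the fundamental groups $\mc{P}$ (respectively $\mc{Q}$) of the resulting components. Any edge Dehn twist at an edge incident to some $w_k\in[w]$, and any type-(3) twist at such a $w_k$, lies in $\Out(L,\mc{P})$. One then observes that, because $\wh{\Delta}_{[w]}$ is the abelianization of $\pi_1\Lambda_{[w]}$, there is a natural map $\Out(L,\mc{P})\to\Out(\model_\res,\mc{Q})$ through which $\Phi$ factors on these generators, and $\Out(\model_\res,\mc{Q})$ is visibly generated by Dehn twists along edges incident at $[w]$ together with type-(3) twists at $[w]$, hence is contained in $\Mod(\model_\res)$. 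This shows that $\Phi(\delta_z)$ is modular as a \emph{product} of generalized Dehn twists, without ever asserting it is a single one. Your identification of the obstacle in Case 3 is related to this, but the resolution is not the one you sketch; it is the passage to the relative automorphism groups $\Out(\,\cdot\,,\mc{P})$ and $\Out(\,\cdot\,,\mc{Q})$ that does the work for both cases at once.
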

\begin{proof}
 Uniqueness in the universal property of $\model_\res$ implies in the usual way that the assignment $\Phi(\alpha)=\wh{\alpha}$ is a homomorphism. From the commutativity of the diagram, 
 \[
\eta(\alpha(g))=\wh{\alpha}(\eta(g))
\]
for all $g\in L$, as required.

It therefore remains to prove that $\wh{\alpha}\in\Mod(\model_\res)$. By Definition \ref{d:Gen Dehn twist and modular group}, it suffices to show that if $\alpha\in\Mod(L)$ is a generalized Dehn twist then so is $\widehat{\alpha}\in\Out(\model_\res)$. We need to check the three cases of Definition \ref{d:Gen Dehn twist and modular group}. The second is immediate: if $\alpha$ is a Dehn twist in an essential two-sided simple closed curve in a socket vertex of $\Delta$, the same curve arises on a socket vertex of $\wh{\Delta}$, and we can also Dehn twist in that.

We deal with Cases (2) and (3) of Definition \ref{d:Gen Dehn twist and modular group} simultaneously, but first we need some notation.  For a group $G$ with a collection of subgroups $\mc{P}=\{P_1,\ldots,P_n\}$, we write $\Aut(G,\mc{P})$ for the subgroup of $\Aut(G)$ consisting of those automorphisms $\alpha$ so that, for each $i$, there is an inner automorphism $\iota_i\in\Aut(G)$ so that $\alpha|_{P_i}=\iota_i$.  Let $\Out(G,\mc{P})$ be the image of $\Aut(G,\mc{P})$ in $\Out(G)$.

Let $[w]$ be an abelian vertex of $\wh{\Delta}$ and let $[e]$ be an incident edge. Then we may assume that $\alpha$ is a generalized Dehn twist either coming from Case (2) of Definition \ref{d:Gen Dehn twist and modular group} and associated to an edge $e$ of $\Delta$, or coming from Case (3) of Definition \ref{d:Gen Dehn twist and modular group} and associated to an abelian vertex $w$ of $\Delta$.  Let $\Lambda_{[w]}$ be the graph of abelian groups that arises in the construction of $\wh{\Delta}_{[w]}$, as in \S\S\ref{s:Construction}.  

We next delete the equivalence class $[w]$, thought of as a set of abelian vertices of $\Delta$,  and let $\Delta_1,\ldots,\Delta_n$ be the graphs of groups corresponding to the resulting connected components. Let $P_1=\pi_1\Delta_i$ for each $i$, and let $\mc{P}=\{P_1,\ldots,P_n\}$, and note that $\alpha$ is contained in $\Out(L,\mc{P})$.

We next equip $\model_\res=\pi_1\wh{\Delta}$ with a similar family of subgroups.  Let $\wh{\Delta}_1,\ldots,\wh{\Delta}_m$ be the connected components obtained from $\wh{\Delta}$ after deleting the vertex $[w]$, let $Q_j=\pi_1\wh{\Delta}_j$ for each $j$, and let $\mc{Q}=\{Q_1,\ldots,Q_m\}$.  

Since $\wh{\Delta}_{[w]}$ is defined to be the abelianization of $\pi_1\Lambda_{[w]}$, there is a natural map $\Out(L,\mc{P})\to \Out(\model_\res,\mc{Q})$.   But the latter is generated by Dehn twists associated to edges incident at $[w]$ and by generalized Dehn twists associated to $[w]$.  In particular, $\alpha$ is indeed contained in $\Mod(\model_\res)$, as claimed.

\end{proof}

\section{Relatively immutable subgroups} \label{s:Rel Imm}

\subsection{Limiting actions on $\R$--trees and the Rips machine}

We recall from \cite{groves_rh,groves:limitRH2} that if $\Upsilon$ is a toral relatively hyperbolic group, $G$ is finitely generated group and $\{ \rho_i \co G \to \Upsilon \}$ a sequence of non-conjugate homomorphisms with stable kernel $K = \SK(\rho_i)$ then there is a limit action of $L = G/K$ on an $\R$--tree $T_\infty$ with no global fixed point, abelian edge stabilizers and trivial tripod stabilizers (see \cite[Theorem 6.5]{groves_rh}).  Since the $L$--action on $T_\infty$ is superstable, the Rips machine (see \cite[Main Theorem]{Guirardel:Rtrees}) applies and $L$ admits a nontrivial splitting over an abelian subgroup.

\begin{lemma} \label{l:splitting essential}
The group $L = G/\SK(\rho_i)$ described above admits a nontrivial primary splitting.
\end{lemma}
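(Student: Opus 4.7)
The plan is to upgrade the abelian splitting of $L$ provided by the Rips machine (applied to the superstable action of $L$ on $T_\infty$) into a primary splitting. Recall that being primary requires two conditions: (a) every noncyclic abelian subgroup is elliptic, and (b) every edge group is root-closed (essentiality).

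For (a), I would appeal to the construction of the limit tree $T_\infty$ in \cite{groves_rh,groves:limitRH2}. That construction is arranged so that every noncyclic abelian subgroup of $L$ fixes a point of $T_\infty$: such a subgroup maps, for large $i$, into a noncyclic abelian (hence parabolic) subgroup of $\Upsilon$, and the relatively hyperbolic construction treats parabolics as elliptic. Ellipticity in $T_\infty$ then descends to the simplicial quotient extracted by the Rips machine (\cite[Main Theorem]{Guirardel:Rtrees}), so the splitting $\Lambda$ it produces already satisfies (a).

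For (b), given an edge group $E$ of $\Lambda$, I would consider its root closure $\bar E = \{g\in L: g^k\in E\text{ for some }k\neq 0\}$. Since $L$ is CSA with all abelian subgroups finitely generated (Theorem \ref{thm:Sela}), commutative transitivity forces $\bar E$ to lie in the unique maximal abelian subgroup $A$ of $L$ containing $E$; in particular $\bar E$ is abelian and finitely generated. If $\bar E$ is noncyclic, it is elliptic by (a), so sits inside each of the two (not necessarily distinct) vertex groups incident at the edge, and standard Bass--Serre surgery allows one to replace $E$ with $\bar E$ in the splitting while preserving $\pi_1\Lambda = L$ and the underlying graph. If $\bar E$ is cyclic, the finite-index inclusion $E\le \bar E$ allows the same enlargement, using malnormality of $A$ in $L$ to verify that the incident vertex groups meet $\bar E$ exactly in the new edge group. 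Performing this edge-by-edge yields an essential abelian splitting $\Lambda'$, still satisfying (a), and nontrivial since the underlying graph is never collapsed.

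The main obstacle I anticipate is the root-closure step in (b): one must verify carefully that the enlarged edge groups remain compatible with the incident vertex groups, and that no fold is forced which would collapse $\Lambda'$ to a trivial splitting. CSA and malnormality of maximal abelian subgroups are what make this routine rather than genuinely difficult, and the nontriviality of $\Lambda$ (together with the fact that enlargement does not identify vertices) ensures that $\Lambda'$ remains a nontrivial splitting of $L$, completing the proof.
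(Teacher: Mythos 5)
There is a genuine gap at your step (a). It is not true that every noncyclic abelian subgroup of $L$ fixes a point of $T_\infty$: this is precisely the \emph{axial} case of the Rips machine, in which a noncyclic abelian subgroup of $L$ acts with dense orbits on a line in $T_\infty$ (in the classical case of limit groups over free groups, this is exactly where the abelian vertex groups of the JSJ come from). Your justification breaks down at the first step: for a noncyclic abelian $A\leq L$, the image $\rho_i(A)$ is abelian but may perfectly well be cyclic and generated by a hyperbolic element of $\Upsilon$ (e.g.\ a surjection $\Z^2\onto\Z$), so it need not be parabolic; and the cited construction of $T_\infty$ guarantees only abelian arc stabilizers and trivial tripod stabilizers, not ellipticity of noncyclic abelian subgroups. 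The paper's proof is a case analysis on the three types of components of the tree-of-actions decomposition of \cite[Main Theorem]{Guirardel:Rtrees} --- axial, surface and simplicial --- and it is the axial case that handles exactly the noncyclic abelian subgroups acting non-elliptically, by arranging for them to become vertex groups of the simplicial splitting. That case analysis is the content your argument is missing; asserting ellipticity in $T_\infty$ and letting it ``descend'' does not substitute for it.

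Two further points are treated too lightly. First, extracting a nontrivial simplicial splitting with abelian edge groups from the tree of actions is itself delicate: the edge groups of the resulting decomposition can be stabilizers of points where two vertex trees meet, rather than arc stabilizers, so abelianness is not automatic --- the paper invokes triviality of tripod stabilizers at exactly this point. Second, your root-closure surgery in (b) is not literally ``replace $E$ by $\overline{E}$'': the root closure need not lie in both incident vertex groups, so the correct operation is to pull roots across edges (replacing a vertex group $\Delta_v$ by $\overline{E}*_E\Delta_v$, say) or to modify the splitting as in \cite[\S 3]{dahmanigroves1}, and one must then re-check nontriviality. These last two issues are repairable with standard CSA arguments; the failure of ellipticity of noncyclic abelians in $T_\infty$ is the essential gap.
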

\begin{proof}
This follows from the way that the $L$--action on $T_\infty$ is turned into a simplicial action of $L$ on a tree.  There are essentially three cases to consider: axial, surface and simplicial.  The axial and surface cases are straightforward and the simplicial case follows quickly from the fact that tripod stabilizers are trivial.  One needs to be slightly careful using the tree of actions decomposition that \cite[Main Theorem]{Guirardel:Rtrees} produces, since the splitting may come from a point which is the intersection of two vertex trees, so the edge group needn't fix an arc in the tree.  This splitting may not be primary, but the classification of the vertex trees in this decomposition makes it straightforward to find a different nontrivial primary splitting of $L$.
\end{proof}

Below, when we consider relatively immutable subgroups, it will be useful to consider homomorphisms that act as inner automorphisms on particular subgroups.  To this end, we make the following definition.

\begin{definition}
Suppose that $G$ is a group, and $H \le G$ a subgroup.  Suppose further that $\mc{P}$ is a collection of subgroups of $H$.  Denote by $\Hom_{\mc{P}}(H,G)$ the set of maps $\lambda \co H \to G$ so that for each $P \in \mc{P}$ the restriction of $\lambda$ to $\mc{P}$ is the restriction of an inner automorphism of $G$ to $P$.
\end{definition}
Note that the inner automorphism associated to each $P$ might be different for different $P$.

\begin{lemma} \label{l:rel splitting}
Let $\Upsilon$ be a toral relatively hyperbolic group, $H$ a finitely generated subgroup and $\mc{P}$ a finite collection of finitely generated subgroups of $H$.
Suppose that $\{ \rho_i \}$ is a convergent sequence of non-conjugate homomorphisms from $\Hom_{\mc{P}}(H,\Upsilon)$ with trivial stable kernel.  Each element of $\mc{P}$ fixes a point in the limiting $\R$--tree $T_\infty$.

Moreover, $H$ admits a nontrivial primary splitting relative to $\mc{P}$.
\end{lemma}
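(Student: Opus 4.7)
The plan is to apply the Bestvina--Paulin--Sela construction to $\{\rho_i\}$, obtain a limiting action of $H$ on an $\R$--tree $T_\infty$ as in \cite{groves_rh,groves:limitRH2}, establish that each $P \in \mc{P}$ fixes a point there, and then invoke the Rips machine as in Lemma \ref{l:splitting essential} to produce the desired primary splitting of $H$ relative to $\mc{P}$. Since $\{\rho_i\}$ is a convergent sequence of pairwise non-conjugate homomorphisms with trivial stable kernel, this construction supplies a superstable action of $H$ on $T_\infty$ with no global fixed point, abelian arc stabilizers and trivial tripod stabilizers, together with rescaling factors $\lambda_i \to 0$ such that $\|h\|_{T_\infty} = \lim_i \lambda_i \|\rho_i(h)\|_X$ for every $h \in H$, where $X$ is an appropriate (cusped) space on which $\Upsilon$ acts.

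The heart of the proof is to show that each $P \in \mc{P}$ fixes a point in $T_\infty$. For each such $P$ and each $i$, the hypothesis $\rho_i \in \Hom_{\mc{P}}(H,\Upsilon)$ provides $g_i \in \Upsilon$ with $\rho_i(p) = g_i p g_i^{-1}$ for all $p \in P$. Hence $\|\rho_i(p)\|_X = \|p\|_X$ is independent of $i$, so that $\|p\|_{T_\infty} = 0$, i.e.\ $p$ is elliptic in $T_\infty$ for every $p \in P$. To upgrade this pointwise statement to a common fixed point, I would note that for any $p,q \in P$ the product $pq$ lies in $P$ and so is also elliptic; hence $\mathrm{Fix}(p) \cap \mathrm{Fix}(q) \neq \emptyset$, since otherwise the bridge between the fixed subtrees would be translated by $pq$. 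Because $P$ is finitely generated, Helly's theorem for $\R$--trees applied to the fixed sets of a finite generating set yields either a common fixed point in $T_\infty$ or an invariant end; the latter possibility is ruled out using superstability combined with the triviality of tripod stabilizers (or directly by observing that, for any fixed $x_0 \in X$, the set $\{g_i p \cdot x_0 : p \in S\}$ has uniformly bounded diameter for $S$ a finite generating set of $P$, so that the $P$--orbit collapses in the scaled limit).

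With each $P \in \mc{P}$ elliptic in $T_\infty$, the remainder of the argument follows the template of Lemma \ref{l:splitting essential}. Guirardel's Main Theorem from \cite{Guirardel:Rtrees} decomposes the superstable $H$--tree $T_\infty$ as a tree of actions whose collapsed simplicial tree has abelian edge stabilizers, and in which each $P$ is elliptic (being contained in a vertex or edge stabilizer); the same manipulations as in Lemma \ref{l:splitting essential} then produce a nontrivial primary splitting of $H$, which is automatically relative to $\mc{P}$. The main obstacle is establishing ellipticity of $P$ as a subgroup: the conjugation hypothesis immediately yields ellipticity of each element, but ruling out the end-fixing possibility cleanly, within the superstable setting and using only the data carried by the $g_i$, is the subtlest step.
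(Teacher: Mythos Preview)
Your proposal is correct, and the overall strategy matches the paper's. The main difference lies in how you establish that each $P\in\mc{P}$ has a \emph{common} fixed point in $T_\infty$.

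You lead with a translation-length argument: since $\rho_i(p)$ is conjugate to $p$, its translation length on $X$ is constant in $i$, hence vanishes in the rescaled limit, so each $p$ is elliptic. You then invoke Helly's theorem on a finite generating set and worry about excluding the end-fixing case, calling this ``the subtlest step''. The paper bypasses this entirely by using the bounded-displacement argument you relegate to a parenthetical: because a \emph{single} conjugator $g_i$ works for all of $P$, the point $g_i\cdot x_0$ is moved at most $d_X(p\cdot x_0,x_0)$ by $\rho_i(p)$, uniformly in $i$ and for every $p$ in a generating set. Citing \cite[Lemma 5.4]{dahmanigroves1}, this immediately gives a common fixed point in $T_\infty$ without any Helly or end-fixing analysis. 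So your parenthetical \emph{is} the paper's proof; you should promote it to the main argument and drop the Helly route, which introduces an unnecessary case distinction.

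For the final step the paper cites the relative Rips machine \cite{Guirardel:Rntrees} rather than \cite{Guirardel:Rtrees}. Your reasoning that ellipticity of $P$ in $T_\infty$ forces ellipticity in the simplicial tree coming from the graph-of-actions decomposition is sound, so \cite{Guirardel:Rtrees} does suffice here; the paper's citation is simply a more direct route to a splitting that is relative by construction.
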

\begin{proof}
Consider the homomorphisms $\{ \rho_i \co H \to \Upsilon \}$ as in the statement of the lemma, {let $P \in \mc{P}$ and suppose $p \in P$}.  Then $\rho_i(p)$ is always conjugate to $p$ in $\Upsilon$, which implies that there is some $x_i \in \Upsilon$ (the conjugating element) so that $d_\Upsilon(\rho_i(p).x_i,x_i) \le K$, where $K$ is the word length of $p$, and in particular doesn't depend on $i$.  By \cite[Lemma 5.4]{dahmanigroves1} this means that $p$ fixes a point in $T_\infty$.
If we take another $q \in P$ then it will fix the same point of $T_\infty$.  Applying this argument to a finite generating set of each $P \in \mc{P}$ shows that there are all elliptic in $T_\infty$.

It now follows from the relative version of the Rips machine in \cite{Guirardel:Rntrees} that $H$ admits a nontrival abelian splitting relative to $\mc{P}$, and a nontrivial primary relative  splitting can be found in the same way as for Lemma \ref{l:splitting essential} above.
\end{proof}

\subsection{Relatively immutable subgroups}

\begin{definition}
Let $G$ be a group, $H \le G$ a subgroup and $\mc{P}$ a collection of subgroups of $H$.  The pair $(H,\mc{P})$ is {\em relatively immutable in $G$} if the set $\Hom_{\mc{P}}(H,G)$ contains only finitely many conjugacy classes of injective homomorphisms.
\end{definition}

\begin{lemma} \label{l:rel immutable if no splitting}
Suppose that $\Upsilon$ is toral relatively hyperbolic, that $H \le \Upsilon$ is finitely generated and that $\mc{P}$ is a finite nonempty collection of nontrivial finitely generated subgroups of $H$.  Then $(H,\mc{P})$ is relatively immutable if and only if $H$ admits no nontrivial primary splitting relative to $\mc{P}$.
\end{lemma}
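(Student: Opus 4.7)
The plan is to prove the two implications of the biconditional separately. The direction ``$(H,\mc{P})$ not relatively immutable implies $H$ admits a nontrivial primary splitting relative to $\mc{P}$'' follows quickly from Lemma~\ref{l:rel splitting}, while the converse requires a Dehn--twist construction combined with a CSA argument.

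For the first direction, take an infinite family of pairwise non-conjugate injections $\{\rho_i\} \subseteq \Hom_{\mc{P}}(H,\Upsilon)$ furnished by the hypothesis, and extract a convergent subsequence using compactness of the space of marked homomorphisms of a finitely generated group into $\Upsilon$.  The subsequence remains pairwise non-conjugate and consists of injections, so its stable kernel is trivial.  Lemma~\ref{l:rel splitting} applies directly and produces the desired nontrivial primary splitting of $H$ relative to $\mc{P}$.

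For the converse, suppose $\Lambda$ is a nontrivial primary splitting of $H$ relative to $\mc{P}$.  Because $\Lambda$ is primary with each $P \in \mc{P}$ elliptic, the subgroup of $\Aut(H)$ consisting of automorphisms that restrict to inner automorphisms on every $P$ contains an element $\alpha$ of infinite order modulo $\mathrm{Inn}(H)$: a Dehn twist $\delta_z$ along a nontrivial abelian edge group (choosing $z$ of infinite order, which exists since $\Upsilon$ is torsion-free), or a Fouxe--Rabinovitch partial conjugation in the case that $\Lambda$ has only trivial edge groups.  Each $P \in \mc{P}$ sits inside a vertex group after conjugation, so $\alpha$ acts on $P$ by conjugation by an element of $H$.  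Letting $\iota \co H \hookrightarrow \Upsilon$ denote the inclusion, the compositions $\iota \circ \alpha^n$ therefore all lie in $\Hom_{\mc{P}}(H,\Upsilon)$ and are injections.  The remaining step -- to show that these yield infinitely many $\Upsilon$--conjugacy classes -- is the main obstacle.  If not, pigeonhole supplies $k \neq 0$ and $g \in \Upsilon$ so that $\alpha^k$ coincides with conjugation by $g$ on $H$.  Combining the CSA hypothesis on $\Upsilon$ with the fact that edge groups of $\Lambda$ incident at a nonabelian vertex are maximal abelian there, one pins down $g$ enough to force $\alpha^k$ to be an inner automorphism of $H$, contradicting the infinite order of $[\alpha]$ in $\Out(H)$.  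The CSA hypothesis is essential in this last step, as it limits which outer automorphisms of $H$ can be realized by conjugation in the ambient group.
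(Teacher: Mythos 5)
Your forward direction (``not relatively immutable implies a nontrivial primary splitting'') is correct and is exactly the paper's argument: pick representatives of infinitely many conjugacy classes of injections, pass to a convergent subsequence, note the stable kernel is trivial, and invoke Lemma~\ref{l:rel splitting}. The gap is in the converse. The paper does not run a Dehn--twist argument at all there; it simply cites a relative adaptation of \cite[Lemma 3.34]{dahmanigroves1}, and the content of that citation is precisely the part of your sketch that does not go through. Concretely, your first step --- that a Dehn twist $\delta_z$ in the \emph{given} splitting $\Lambda$ has infinite order in $\Out(H)$ --- is unproved and is false in general. Take $H=\langle x,y,t\mid [x,t]=1\rangle\cong \Z^2 *\Z$ with the nontrivial primary splitting $\langle x,y\rangle *_{\langle x\rangle}\langle x,t\rangle$ relative to $\mc{P}=\{\langle x,t\rangle\}$ (the edge group is root-closed and the noncyclic abelian subgroups are conjugates of the $\Z^2$ vertex group). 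Here $Z(\langle x\rangle)=\langle x,t\rangle$ is the abelian vertex group, so the Dehn twist that conjugates $\langle x,t\rangle$ by $z\in Z(\langle x\rangle)$ is the identity map, and the twist with the roles of the vertex groups reversed is literally $\mathrm{ad}_z$, hence inner; your fallback (partial conjugation) does not apply since the edge group is nontrivial, and is inner here in any case. The homomorphisms actually witnessing non-immutability in this example come from a \emph{different} splitting (the free splitting $\Z^2 * \langle y\rangle$, where the transvections $y\mapsto t^n y$ fix $\langle x,t\rangle$ pointwise and are non-inner) or are non-surjective monomorphisms such as $y\mapsto y^n$. So the converse genuinely requires the case analysis of the cited lemma --- the choice of splitting and of twisting element must depend on which vertex groups are abelian --- and cannot be carried out with an arbitrary twist in an arbitrary given primary splitting.

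A secondary issue is the endgame. Even granted an infinite-order class $[\alpha]\in\Out(H)$ restricting to inner automorphisms on each $P\in\mc{P}$, the step ``finitely many $\Upsilon$--conjugacy classes forces some $\alpha^k$ to be inner in $H$'' is where the remaining work lies, and ``pinning down $g$'' is not automatic: a priori $\alpha^k$ could be realized by conjugation by some $g\in\Upsilon\smallsetminus H$ without being inner in $H$. The standard resolution is to arrange that $\alpha^k$ fixes a nonabelian subgroup pointwise, so that commutative transitivity of $\Upsilon$ forces $g=1$ and hence $\alpha^k=\mathrm{id}$; but whether this can be arranged again depends on the suppressed case analysis. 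In summary: your first implication matches the paper; for the second, the paper's proof is a citation of \cite[Lemma 3.34]{dahmanigroves1}, and your attempted direct proof of that content has a real gap at the choice of automorphism.
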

\begin{proof}
If $\Hom_{\mc{P}}(H,\Upsilon)$ contains infinitely many conjugacy classes of injective homomorphisms then Lemma \ref{l:rel splitting} implies that $H$ admits a nontrivial primary splitting relative to $\mc{P}$.

On the other hand, if $H$ does admit a nontrivial primary splitting relative to $\mc{P}$ then there are infinitely many conjugacy classes of automorphisms of $H$ which are contained in $\Hom_{\mc{P}}(H,H)$.  This can be seen, for example, by a straightforward adaptation to the relative case of \cite[Lemma 3.34]{dahmanigroves1}.
\end{proof}

The following result is similar to results in  \cite[$\S$ 4,5]{dahmanigroves1} and also to \cite[Lemma 7.5]{GrovesWilton10}.  In \cite{dahmanigroves1} it is assumed that the domain is finitely presented, and in \cite{GrovesWilton10} it is assumed that the target is hyperbolic and it is considering immutable rather than relatively immutable subgroups.  However, the technical details are similar.

\begin{proposition} \label{p:immutable on finite ball}
Suppose that $\Upsilon$ is a toral relatively hyperbolic group, that $H$ is a finitely generated subgroup of $\Upsilon$ and that $\mc{P}$ is a finite, nonempty collection of nontrivial finitely generated subgroups of $H$.  Then $(H,\mc{P})$ is relatively immutable if and only if there exist finitely many maps $\rho_1, \ldots , \rho_k
\in \Hom_{\mc{P}}(H,\Upsilon)$ and a positive integer $D$ so that for any map $\eta \in \Hom_{\mc{P}}(H,\Upsilon)$ either
\begin{enumerate}
\item $\eta$ is conjugate to $\rho_i$ for some $i$; or
\item $\eta$ is not injective on the ball of radius $D$ about $1$ in the Cayley graph of $H$ (with respect to the chosen finite generating set).
\end{enumerate}
\end{proposition}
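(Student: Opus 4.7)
The plan is to prove both directions separately, with the backward direction being essentially immediate and the forward direction requiring a standard compactness-plus-Rips-machine argument.

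For the backward direction, suppose the listed maps $\rho_1,\ldots,\rho_k$ and constant $D$ exist. Any injective $\eta\in\Hom_{\mc{P}}(H,\Upsilon)$ is injective on every ball of the Cayley graph, in particular the ball of radius $D$, and so by hypothesis $\eta$ is conjugate to some $\rho_i$. Hence the set $\Hom_{\mc{P}}(H,\Upsilon)$ contains at most $k$ conjugacy classes of injections, so $(H,\mc{P})$ is relatively immutable.

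For the forward direction, suppose $(H,\mc{P})$ is relatively immutable, and let $\rho_1,\ldots,\rho_k$ be representatives of the finitely many conjugacy classes of injective maps in $\Hom_{\mc{P}}(H,\Upsilon)$. Assume for contradiction that no such $D$ exists. Then for every $n$ there is a map $\eta_n\in\Hom_{\mc{P}}(H,\Upsilon)$ that is injective on the ball $B_n$ of radius $n$ about $1$ but is not conjugate to any $\rho_i$. The stable kernel of $\{\eta_n\}$ is trivial, since any nontrivial element $h\in H$ lies in $B_n$ for all sufficiently large $n$ and hence satisfies $\eta_n(h)\neq 1$ eventually.

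The next step is to arrange that the maps $\eta_n$ are pairwise non-conjugate, after passing to a subsequence. If not, then infinitely many of the $\eta_n$ are conjugate to some fixed $\eta_{n_0}$; conjugate maps share a kernel, so $\eta_{n_0}$ is injective on $B_n$ for arbitrarily large $n$ and therefore injective. This contradicts the fact that every injective map in $\Hom_{\mc{P}}(H,\Upsilon)$ is conjugate to one of the $\rho_i$. After this reduction, a standard diagonal argument on the finitely generated group $H$ allows us to pass to a further subsequence so that $\{\eta_n\}$ is convergent. Lemma \ref{l:rel splitting} now applies to yield a nontrivial primary splitting of $H$ relative to $\mc{P}$, which contradicts Lemma \ref{l:rel immutable if no splitting}.

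The only delicate point is the non-conjugacy reduction, since naively a subsequence of conjugate maps carries no new information; but relative immutability, applied to the distinguished list $\rho_1,\ldots,\rho_k$, is exactly what rules this out. Once pairwise non-conjugacy is in place, the proof concludes by a direct appeal to the already-established Rips-machine results in the relative setting.
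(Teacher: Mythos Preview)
Your proof is correct and follows essentially the same strategy as the paper: both directions are handled via the Rips-machine lemmas (Lemma \ref{l:rel splitting} and Lemma \ref{l:rel immutable if no splitting}), with the forward direction producing a sequence of non-conjugate maps injective on larger and larger balls and extracting a limiting tree action. The only difference is organizational: the paper negates the full existential statement and (implicitly) builds the non-conjugate sequence inductively, whereas you fix the list $\rho_1,\ldots,\rho_k$ in advance as the injective representatives and then supply an explicit pigeonhole argument to rule out infinitely many conjugate $\eta_n$ --- this is a welcome clarification of a step the paper leaves to the reader.
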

\begin{proof}
If $(H,\mc{P})$ is not relatively immutable then there are infinitely many conjugacy classes of injective maps in $\Hom_{\mc{P}}(H,\Upsilon)$, so in this case there does not exist a $D$ and a finite list as in the statement of the result.

Conversely, suppose that the statement of the result does not hold.  Then there is a sequence
$\{ \phi_i \}$ of non-conjugate maps in $\Hom_{\mc{P}}(H,\Upsilon)$ so that $\phi_i$ is injective on the ball of radius $i$ about $1$ in $H$.  We can pass to a convergent subsequence of $\{ \phi_i \}$ and obtain a faithful action of $H$ on a limiting $\R$--tree $T_\infty$.  By Lemma \ref{l:rel splitting} $H$ admits a nontrivial primary splitting relative to $\mc{P}$ and so $(H,\mc{P})$ is not relatively immutable, by Lemma \ref{l:rel immutable if no splitting}.
\end{proof}

\subsection{Equations with rational constraints}

In this subsection we recall some results from \cite{dahmani09} and \cite{dahmanigroves1}, following the point of view of \cite{GrovesWilton10}.  What we do in this subsection is similar to \cite[$\S6$]{GrovesWilton10}, working with toral relatively hyperbolic groups instead of torsion-free hyperbolic groups.  In particular, see \cite[$\S2$]{GrovesWilton10} for an introduction to systems of equations and inequations over groups, and their relationship with homomorphisms.  In this section, we will always consider a solution to a system of equations as a homomorphism. 

Throughout this section, we fix a toral relatively hyperbolic group $\Upsilon$.  The main theorem in this section is Theorem \ref{t:fin many conjugacy classes} which asserts that we can algorithmically recognize when a finite system of equations and inequations over $\Upsilon$ has finitely many conjugacy classes of solutions (and list conjugacy--representatives of solutions in this case).  In fact, the algorithm is {\em uniform} in the sense that it takes a finite presentation for $\Upsilon$ as input, along with the equations and inequations.

Proving Theorem \ref{t:fin many conjugacy classes} involves understanding certain conjugacy classes of homomorphisms from a group $H_\Sigma$ (where $\Sigma = 1$ are the equations) to $\Upsilon$.  It is straightforward to check (using \cite[Theorem 0.1]{dahmani09}) whether or not every homomorphism from $H_\Sigma$ to $\Upsilon$ has abelian image, and in this case Theorem \ref{t:fin many conjugacy classes} is straightforward.  Therefore, we henceforth deal with situations where $H_\Sigma$ has nonabelian images in $\Upsilon$, and thereby can (effectively) choose a pair of elements $a,b \in H_\Sigma$ which do not commute (this will be witnessed by non-commuting images in $\Upsilon$).

\begin{definition}
Let $H$ be a group with a fixed generating set $X$.  A homomorphism $\psi \co H \to \Upsilon$ is {\em compatible} if it is injective on the ball of radius $8$ in $H$ (with respect to $X$).
\end{definition}

\begin{definition}
Let $k$ be a natural number and let $B_{F(X)}(k)$ be the ball of radius $k$ in the free group $F(X)$.  A system of equations and inequations $\Sigma = 1$, $\Lambda \ne 1$ {\em forces the ball of radius $k$} if there is a subset $S \subseteq B_{F(X)}(k)$ such that $S \subseteq \Sigma$ and $B_{F(X)}(k) \setminus S \subseteq \Lambda$.
\end{definition}

Suppose that $\Sigma =1$, $\Lambda \ne 1$ is a system of equations and inequations.  If $X$ is finite, there are finitely many subsets of $B_{F(X)}(8)$.  For any $S \subseteq B_{F(X)}(8)$, let $\Sigma_S = \Sigma \cup S$ and $\Lambda_S = \Lambda \cap \left( B_{F(X)}(8) \setminus S \right)$.  For any solution $\psi$ to $\Sigma =1$, $\Lambda \ne 1$ there is a unique $S$ so that $\psi$  is a solution to $\Sigma_S = 1$, $\Lambda_S \ne 1$.  Conversely, for any $S$, a solution to $\Sigma_S = 1$, $\Lambda_S \ne 1$ is obviously a solution to $\Sigma = 1$, $\Lambda \ne 1$.  
Moreover, for any $S$, any solution $\phi$ to $\Sigma_S = 1$, $\Lambda_S \ne 1$ corresponds to a compatible homomorphism $H_{\Sigma_S} \to \Upsilon$.

Therefore, by replacing $\Sigma = 1$, $\Lambda \ne 1$ by finitely many systems of equations and inequations, we may consider only solutions corresponding to compatible homomorphisms.

Fix once and for all a pair $a,b \in H$ of noncommuting elements, which exist if $H$ is nonelementary.  Given such $a$ and $b$, \cite[Remark 4.8]{dahmanigroves1} defines a condition $\Omega$, and \cite[Definition 4.5]{dahmanigroves1} (first defined in \cite{dahmani09}) defines the notion of an {\em acceptable lift}, which is a choice of words representing the images of the ball of radius $2$ under a homomorphism.  See \cite{dahmanigroves1}, \cite{dahmani09} for more details.

\begin{definition}
A homomorphism $\psi \co H_\Sigma \to \Upsilon$ is {\em fairly short} if it is compatible and has an acceptable lift $\tilde{\psi} \co B_{H_\Sigma}(2) \to F$ that satisfies $\Omega$.  A homomorphism $\psi \co H_\Sigma \to \Upsilon$ is {\em very short} if it is compatible and every acceptable lift $\tilde\psi \co B_{H_\Sigma}(2) \to F$ satisfies $\Omega$.
\end{definition}

The following result is implied by \cite[Lemma 4.6, Proposition 4.7]{dahmanigroves1}.

\begin{lemma}\label{lemma:Short homs}
Any conjugacy class of compatible homomorphism $H_\Sigma \to \Upsilon$ contains at least one very short homomorphism and at most finitely many fairly short homomorphisms.
\end{lemma}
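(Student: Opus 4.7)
The strategy is a shortening argument in the spirit of Rips--Sela, adapted to toral relatively hyperbolic groups as in \cite{dahmani09, dahmanigroves1}. The condition $\Omega$ should be understood as a minimality condition that geometrically prevents any further conjugation from shortening the images of the elements of $B_{H_\Sigma}(2)$; the plan is to exploit this exactly.

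First I would set up the relevant notion of size of an acceptable lift. Given an acceptable lift $\tilde\psi$ of a compatible homomorphism $\psi \co H_\Sigma \to \Upsilon$, define $|\tilde\psi|$ to be the maximum word length of $\tilde\psi(h)$ as $h$ ranges over $B_{H_\Sigma}(2)$ with respect to the fixed finite generating set of $\Upsilon$. Conjugating $\psi$ by an element $g \in \Upsilon$ produces a new compatible homomorphism $\psi^g$ whose acceptable lifts are related to those of $\psi$ by the natural twisting, and the quantity $|\tilde\psi|$ generally changes with $g$.

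For the existence of a very short homomorphism, I would minimize $|\tilde\psi|$ over all $\psi$ in a fixed conjugacy class $[\psi_0]$ and all acceptable lifts $\tilde\psi$ of such $\psi$; the minimum is attained, as the quantity takes values in $\mathbb{N}$. Let $\psi_{\min}$ be a corresponding minimizer. By the formulation of $\Omega$ in \cite[Remark 4.8]{dahmanigroves1}, the failure of $\Omega$ for some acceptable lift of $\psi_{\min}$ would produce an element $g \in \Upsilon$ whose conjugation action strictly decreases the size, contradicting minimality. Consequently every acceptable lift of $\psi_{\min}$ satisfies $\Omega$, so $\psi_{\min}$ is very short. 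For the finiteness assertion, the key point is that $\Omega$ bounds $|\tilde\psi|$ by a constant $N = N(\Upsilon, a, b)$ depending only on the ambient data: the whole purpose of $\Omega$ is to encode bounds, via the noncommuting pair $a, b$ and the geometry of $\Upsilon$, on the word lengths of all images in $B_{H_\Sigma}(2)$. Since there are only finitely many words of length at most $N$ in the generating set of $\Upsilon$, only finitely many acceptable lifts of $B_{H_\Sigma}(2)$ can satisfy $\Omega$. Each such lift determines the underlying homomorphism on a generating set of $H_\Sigma$, so at most finitely many fairly short homomorphisms lie in any single conjugacy class.

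The main obstacle is giving the precise formulation of $\Omega$ and verifying that its failure on a minimizer genuinely yields a shortening conjugator; this is the geometric heart of the argument and is the content of \cite[Lemma 4.6 and Proposition 4.7]{dahmanigroves1}, whose proofs transfer directly to our toral relatively hyperbolic setting. Everything else, including the passage from ``exists a shortening $g$'' to ``$|\tilde\psi|$ strictly decreases under the corresponding conjugation'', and the bookkeeping that an acceptable lift determines the homomorphism on the required ball, is routine once the geometric lemma is in hand.
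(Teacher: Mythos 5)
The paper offers no proof of this lemma beyond the citation: it is stated to be implied by \cite[Lemma 4.6, Proposition 4.7]{dahmanigroves1}, and you correctly identify those results as the geometric heart. Your sketch of the existence half (minimise the size of an acceptable lift over the conjugacy class, and use the failure of $\Omega$ to produce a strictly shortening conjugator) is consistent with how Lemma 4.6 of that paper is proved, modulo the usual care about which lift realises the minimum.

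The finiteness half, however, contains a genuine error. You assert that $\Omega$ bounds $|\tilde\psi|$ by a constant $N=N(\Upsilon,a,b)$ depending only on the ambient data, and deduce finiteness from the finiteness of the ball of radius $N$ in $\Upsilon$. That cannot be right: if the word lengths of the images of $B_{H_\Sigma}(2)$ under all $\Omega$-lifts were uniformly bounded, there would be only finitely many fairly short homomorphisms $H_\Sigma\to\Upsilon$ \emph{in total}, not merely finitely many per conjugacy class. Combined with the existence statement (each conjugacy class contains a very short, hence fairly short, homomorphism), this would force $\Hom(H_\Sigma,\Upsilon)$ to have only finitely many conjugacy classes of compatible homomorphisms, which is false already for $H_\Sigma$ free of rank $2$ and $\Upsilon$ non-elementary. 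The condition $\Omega$ is not an absolute length bound; it is a \emph{relative} shortness condition, forbidding the kind of large common cancellation among the words of the lift that would allow a conjugation to shorten it substantially. The correct route to finiteness (this is what \cite[Proposition 4.7]{dahmanigroves1} does) is to show that if $\psi$ and $\psi^g$ both admit acceptable lifts satisfying $\Omega$, then $|g|$ is bounded in terms of those lifts; since the image of $H_\Sigma$ contains the non-commuting pair $a,b$ and $\Upsilon$ is CSA, the image has trivial centraliser, so boundedly many conjugators yield boundedly many homomorphisms in the class. You should replace the absolute bound on $|\tilde\psi|$ with a bound on the conjugating element.
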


As explained in \cite[$\S6$]{GrovesWilton10}, the following is now a consequence of \cite[Proposition 1.5]{dahmani09}.

These notions are useful because of the following proposition, which is an immediate consequence of a theorem of Dahmani \cite{dahmani09}. We refer the reader to \cite[Theorem 3.22]{dahmanigroves1} for the statement.

\begin{proposition}\label{prop:Short solutions with conjugation}
There exists an algorithm that takes as input:
\begin{enumerate}
\item a finite presentation for a toral relatively hyperbolic group $\Upsilon$ and a pair of non-commuting elements $a,b\in\Upsilon$,
\item two finite sets of variables $\x=\{x_i\},\y=\{y_j\}$,
\item a finite system (*) consisting of equations $\Sigma(\x) = 1$ and inequations $\Lambda(x) \ne 1$, and equations $\Theta(\x,\y)=1$ with coefficients in $\Upsilon$
\end{enumerate}
and always terminates with answer `Yes' or `No'.  In case the answer is `Yes', there exists a solution to (*) in $\Upsilon$ with an acceptable lift of $\x$ that satisfies $\Omega$.  In case the answer is `No', there is no solution to (*) in $\Upsilon$ for which every acceptable lift of $\x$ satisfies $\Omega$.
\end{proposition}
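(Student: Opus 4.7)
The plan is to reduce the proposition to a direct application of Dahmani's algorithmic theorem \cite[Theorem 3.22]{dahmanigroves1}, which solves systems of equations and inequations with rational constraints over toral relatively hyperbolic groups. The approach parallels \cite[$\S 6$]{GrovesWilton10}, where the analogous argument is carried out in the torsion-free hyperbolic setting; the passage from that setting to the relatively hyperbolic setting requires using the more general theorem of Dahmani in place of \cite[Proposition 1.5]{dahmani09}.

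First, I would recall that an acceptable lift of $\x$, in the sense of \cite[Definition 4.5]{dahmanigroves1}, consists of a specific choice of words in the generators of $\Upsilon$ representing the images of the finitely many group elements in the ball of radius $2$ in $H_\Sigma$, together with the fixed non-commuting pair $a, b$. The condition $\Omega$ from \cite[Remark 4.8]{dahmanigroves1} is a finitary, combinatorial condition on these words, and it is constructed precisely so that `having an acceptable lift satisfying $\Omega$' can be encoded as a rational (regular) condition on the images of the $x_i$ with respect to a fixed automatic structure on $\Upsilon$.

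Next, I would feed into Dahmani's algorithm the original system $(*)$ augmented by the rational constraint encoding `some acceptable lift of $\x$ satisfies $\Omega$'. Dahmani's algorithm then decides whether this augmented system has a solution. A `Yes' answer produces a solution of $(*)$ with an acceptable lift of $\x$ satisfying $\Omega$, which is exactly the first guarantee in the proposition. A `No' answer means no solution of $(*)$ admits \emph{any} acceptable lift of $\x$ satisfying $\Omega$, which is formally stronger than, and hence implies, the required conclusion that no solution has \emph{every} acceptable lift satisfying $\Omega$. The slight asymmetry in the statement is exactly the slack between `fairly short' and `very short' homomorphisms from Lemma \ref{lemma:Short homs}, and both guarantees are consistent with the single `Yes/No' decision produced above.

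The main obstacle, and the only point that is not formal, is verifying that the condition $\Omega$ really is a rational constraint in the sense required by \cite[Theorem 3.22]{dahmanigroves1}. This is essentially built into the construction of $\Omega$ in \cite{dahmanigroves1}: the condition is phrased in terms of bounded-length quasi-geodesic and normal-form properties of representative words in a fixed biautomatic-like structure on $\Upsilon$, so membership in $\Omega$ cuts out a regular language in the normal forms of the $x_i$ and their products of bounded length. Once this translation is in hand, the rest of the proof is a direct invocation of Dahmani's theorem, and the termination and correctness are inherited from that result.
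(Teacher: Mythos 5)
Your proposal is correct and follows the same route as the paper, which simply records Proposition \ref{prop:Short solutions with conjugation} as an immediate consequence of Dahmani's theorem on systems of equations and inequations with rational constraints (\cite[Theorem 3.22]{dahmanigroves1}, originating in \cite{dahmani09}), with $\Omega$ playing the role of the rational constraint on the lifts of $\x$. Your additional remarks --- that $\Omega$ is by construction a rational condition on acceptable lifts, and that the `Yes'/`No' asymmetry is exactly the slack between some lift and every lift satisfying $\Omega$ --- are accurate and consistent with the paper's (citation-only) treatment.
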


A special case of this result (where the variables $\y$ and the system $\Theta$ are not required) is the following.

\begin{proposition}\label{prop:Short solutions}
There exists an algorithm that, given a finite presentation for a toral relatively hyperbolic group, a finite system of equations $\Sigma = 1$ and a finite system of inequations $\Lambda \ne 1$, each with coefficients in $\Upsilon$, always terminates with answer `Yes' or `No'.  In case the answer is `Yes', there exists a fairly short solution to $\Sigma = 1$, $\Lambda \ne 1$ in $\Upsilon$.  In case the answer is `No', there does not exist any very short solution.
\end{proposition}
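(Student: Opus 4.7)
The plan is to deduce Proposition \ref{prop:Short solutions} as a direct specialization of Proposition \ref{prop:Short solutions with conjugation}, after some preliminary bookkeeping to reduce to compatible homomorphisms with a distinguished noncommuting pair available.

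First I would reduce to the case in which every solution corresponds to a compatible homomorphism $H_\Sigma \to \Upsilon$. As indicated in the discussion preceding Lemma \ref{lemma:Short homs}, for each subset $S \subseteq B_{F(X)}(8)$, set $\Sigma_S = \Sigma \cup S$ and $\Lambda_S = \Lambda \cup \bigl(B_{F(X)}(8)\smallsetminus S\bigr)$. There are only finitely many such $S$, and every solution of the original system is a solution of exactly one $(\Sigma_S,\Lambda_S)$. Since $\Sigma_S = 1,\Lambda_S \ne 1$ forces the ball of radius $8$, any solution automatically descends to a compatible homomorphism $H_{\Sigma_S}\to\Upsilon$. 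Running the algorithm below on each $(\Sigma_S,\Lambda_S)$ in turn, and returning \textbf{Yes} exactly when one of them does, gives an algorithm for the original problem.

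Next I would dispose of the abelian case for a fixed $(\Sigma_S,\Lambda_S)$. Using the decidability of the existential theory of $\Upsilon$ (\cite[Theorem~0.1]{dahmani09}), one can effectively check whether there are two words $a,b$ in the generators of $H_{\Sigma_S}$ whose images under some solution fail to commute. If no such pair exists, every solution has abelian image, in which case the problem reduces to solving an (effectively bounded) collection of equations and inequations over a finitely generated abelian subgroup of $\Upsilon$; the resulting existence problem is plainly decidable, and a solution (when it exists) is automatically fairly short in the sense of the statement (or else the abelian collapse already precludes compatibility, again detectable). Otherwise, the same application of \cite[Theorem~0.1]{dahmani09} furnishes a specific such pair $a,b\in H_{\Sigma_S}$ whose images are forced to be noncommuting by the equations.

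With $a,b$ in hand, apply Proposition \ref{prop:Short solutions with conjugation} to the toral relatively hyperbolic group $\Upsilon$, the pair $(a,b)$, the single variable set $\x$ (generators of $H_{\Sigma_S}$), and the system $\Sigma_S(\x)=1,\ \Lambda_S(\x)\ne 1$, taking $\y = \varnothing$ and $\Theta$ empty. The algorithm always terminates with \textbf{Yes} or \textbf{No}. A \textbf{Yes} answer gives a solution whose acceptable lift of $\x$ satisfies $\Omega$; combined with the compatibility already forced by the choice of $(\Sigma_S,\Lambda_S)$, this is precisely a fairly short solution. A \textbf{No} answer means there is no solution for which every acceptable lift of $\x$ satisfies $\Omega$; again together with compatibility, this is exactly the assertion that there is no very short solution.

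The only potentially subtle point is that we must match the two notions of lift satisfying $\Omega$ in Proposition \ref{prop:Short solutions with conjugation} with the definitions of \emph{fairly short} and \emph{very short}: this is immediate once compatibility has been enforced by the $(\Sigma_S,\Lambda_S)$ reduction, so I expect no real obstacle beyond this translation. The genuine content lies in the already-cited Propositions \ref{prop:Short solutions with conjugation} and Lemma \ref{lemma:Short homs}; the present result is an unpacking of those tools.
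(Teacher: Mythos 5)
Your proposal is correct and matches the paper's approach exactly: the paper presents this proposition as the special case of Proposition \ref{prop:Short solutions with conjugation} in which the variables $\y$ and the system $\Theta$ are omitted, and your preliminary reductions (the $(\Sigma_S,\Lambda_S)$ decomposition forcing the ball of radius $8$, and the disposal of the abelian case via \cite[Theorem 0.1]{dahmani09}) are precisely the bookkeeping the paper carries out in the discussion preceding Lemma \ref{lemma:Short homs}. As a minor aside, your $\Lambda_S = \Lambda \cup \bigl(B_{F(X)}(8)\smallsetminus S\bigr)$ is the correct formula (the paper's ``$\cap$'' there appears to be a typo), since one needs $B_{F(X)}(8)\smallsetminus S\subseteq\Lambda_S$ to force the ball of radius $8$.
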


The following now has exactly the same proof as \cite[Theorem 6.9]{GrovesWilton10}.  We remark that the analogous result there is stated as if the algorithm depends on which group $\Upsilon$ is being considered.  However, as noted in \cite[Remark 1.5]{GrovesWilton10} all algorithms in that paper are uniform, in the sense that the finite presentation of $\Upsilon$ can be taken as input (with a single algorithm).  We state the uniform version below.

\begin{theorem} \label{t:fin many conjugacy classes}
There is an algorithm that takes as input a finite presentation for a toral relatively hyperbolic group $\Upsilon$, a finite system of equations and inequations $\Sigma = 1$, $\Lambda \ne 1$ and terminates if and only if there are finitely many conjugacy classes of solutions to the system.  In case the algorithm terminates, it outputs a list consisting of exactly one representative of each conjugacy class of solutions.
\end{theorem}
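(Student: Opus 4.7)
The plan is to adapt the proof of \cite[Theorem 6.9]{GrovesWilton10} to the toral relatively hyperbolic setting, substituting its hyperbolic-group ingredients with the TRH-group analogues established above (notably, the decidability of the existential theory, Theorem \ref{t:Compute Z(g)}, and Propositions \ref{prop:Short solutions} and \ref{prop:Short solutions with conjugation}).

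First, I would reduce to finitely many systems whose solutions are exactly the compatible homomorphisms. For each $S\subseteq B_{F(X)}(8)$, augment $\Sigma$ by the relations in $S$ and $\Lambda$ by the non-trivialities of its complement, partitioning the original solution set; systems with no solutions can be detected and discarded using Proposition \ref{prop:Short solutions}. Next, I would separate the case in which all solutions have abelian image (easily handled using Theorem \ref{t:Compute Z(g)} together with the decidability of the existential theory \cite[Theorem 0.1]{dahmani09}) from the generic case, in which one can fix a pair $a,b\in H_\Sigma$ with non-commuting images.

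The main algorithm then runs two processes in parallel. The \emph{enumeration process} lists candidates $\psi$ by total word length in $\Upsilon$, verifies via the word problem that $\psi$ satisfies the system, tests conjugacy of $\psi$ to each current representative $\phi_i\in\mathcal{L}$ (an existential question, hence decidable), and appends $\psi$ to $\mathcal{L}$ whenever it is genuinely new. The \emph{termination process} repeatedly calls Proposition \ref{prop:Short solutions with conjugation} on an auxiliary system designed to search for a very short solution lying outside the conjugacy classes of the current list; when this search certifies that no such solution exists, the algorithm halts and outputs $\mathcal{L}$. Correctness follows from Lemma \ref{lemma:Short homs}: every conjugacy class of compatible solutions contains a very short representative, so in the finite case the termination process eventually answers ``No'', while in the infinite case the enumeration process keeps finding new representatives forever, and the algorithm does not halt.

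The principal obstacle is encoding the negation of conjugacy to each $\phi_i$ in an existential form suitable for Proposition \ref{prop:Short solutions with conjugation}, since \emph{a priori} non-conjugacy is a universal statement in the conjugator. Following \cite{GrovesWilton10}, I plan to handle this by exploiting Theorem \ref{t:Compute Z(g)} to compute the centralizers $Z_\Upsilon(\phi_i(x_j))$ explicitly, and then using a finite disjunction of existential constraints---indexed by the possible local behaviour of a candidate conjugator relative to these centralizers---to reduce non-conjugacy of tuples to a checkable finite collection of auxiliary systems, each of which Proposition \ref{prop:Short solutions with conjugation} can adjudicate. The remaining bookkeeping is routine.
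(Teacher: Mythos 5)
Your overall architecture (reduce to compatible homomorphisms by forcing the ball of radius $8$, dispose of the abelian-image case, then iterate a search for fairly/very short solutions) is the right one, and your enumeration process is unproblematic: testing whether two \emph{concrete} tuples are conjugate is a single existential question, decidable by \cite[Theorem 0.1]{dahmani09}. But your termination process has a genuine gap, and it is exactly the step you flag as ``the principal obstacle.'' To feed ``there is no very short solution outside the conjugacy classes of $\mathcal{L}$'' to Proposition \ref{prop:Short solutions with conjugation}, you must express ``$\x$ is not conjugate to $\phi_i(\x)$'' by finitely many equations and inequations in $\x$. This cannot be done: the set of tuples conjugate to a fixed tuple $\underline{c}$ is the infinite family $\{(y c_1 y^{-1},\ldots,y c_k y^{-1}) : y\in\Upsilon\}$, so its complement is an infinite conjunction of inequations, one for each coset of $Z_\Upsilon(\underline{c})$. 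Computing the centralizers $Z_\Upsilon(\phi_i(x_j))$ via Theorem \ref{t:Compute Z(g)} only controls the \emph{fibre} of the conjugation map (uniqueness of a conjugator when one exists -- indeed in the CSA setting the intersection $\bigcap_j Z_\Upsilon(\phi_i(x_j))$ is trivial for a non-abelian image); it does not convert the universally quantified non-existence of a conjugator into a finite disjunction of existential systems. No such reduction appears in \cite{GrovesWilton10}, so the appeal to that paper does not close the gap.

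The actual argument of \cite[Theorem 6.9]{GrovesWilton10}, which this paper invokes verbatim (and which is also visible in the proof of Proposition \ref{prop: Detect rel immutable} here), never encodes non-conjugacy at all. At each stage one asks, via Proposition \ref{prop:Short solutions} (the version with auxiliary variables is not needed here), whether the current system has a \emph{fairly short} solution; if so, a naive search finds one, call it $\x_0$, and one augments the system only by the \emph{literal} inequations $\x\neq\x_0$ before repeating. The work of cutting conjugacy classes down to finitely many representatives is done entirely by the rational constraint $\Omega$ together with Lemma \ref{lemma:Short homs}: each conjugacy class of compatible solutions contains at least one very short and at most finitely many fairly short solutions, so the loop answers ``No'' after finitely many rounds if and only if there are finitely many conjugacy classes, and in that case the accumulated $\x_0$'s meet every class. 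Only at the very end does one invoke the simultaneous conjugacy problem -- a decidable existential question between concrete tuples -- to discard duplicates and output exactly one representative per class. With your termination process replaced by this literal-exclusion loop, the rest of your proposal goes through.
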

Note that the proof from \cite{GrovesWilton10} uses the solution to the simultaneous conjugacy problem in $\Upsilon$.  One way to see that the simultaneous conjugacy problem is solvable in toral relatively hyperbolic groups is that this is a special case of an equation, which can be solved by \cite[Theorem 0.1]{dahmani09}.

\begin{question}
In \cite{GrovesWilton10}, we proved that there exists a `complementary' algorithm to the one above, which terminates if and only if the system has infinitely many conjugacy classes of solutions.  Is there such a complementary result in the setting of toral relatively hyperbolic groups?
\end{question}

\subsection{Enumerating relatively immutable subgroups}

In this section we adapt the proof of \cite[Proposition 7.8]{GrovesWilton10} to enumerate \emph{relatively} immutable subgroups (in the toral relatively hyperbolic setting).  Immutable subgroups $H$ of a hyperbolic group $\Gamma$ were characterized as those subgroups for which there is a positive integer $D$ such that the set $\Hom(H,\Gamma)$ contained only finitely many $\Gamma$--conjugacy classes of homomorphisms injective on a ball of radius $D$.  We then recognized this condition by translating it into a set of equations and inequations over $\Gamma$, and using the rational constraint $\Omega$ to restrict conjugacy classes to finitely many representatives.

Our strategy here is similar. Proposition \ref{p:immutable on finite ball} characterizes relatively immutable subgroups $(H,\mc{P})$ of a toral relatively hyperbolic group $\Upsilon$ as those for which the set $\Hom_{\mc{P}}(H,\Upsilon)$ contains only finitely many $\Upsilon$--conjugacy classes injective on a ball of radius $D$ (for some $D$).  We will translate this into a system of equations and inequations over $\Upsilon$ and again use the rational constraint $\Omega$ to restrict conjugacy classes to finitely many representatives.

\begin{proposition} [cf. \cite{GrovesWilton10}, Proposition 7.8]\label{prop: Detect rel immutable}
There exists a Turing machine that takes as input a finite presentation $\langle X \mid R \rangle$ for a toral relatively hyperbolic group $\Upsilon$, a finite subset $\mc{A}$ of $\Upsilon$, and a tuple of finite subsets $\{ Q_1, \ldots , Q_k \}$ so that $Q_i \subseteq \langle \mc{A} \rangle$ for each $i$, and terminates if and only if $(\langle \mc{A} \rangle, \mc{Q})$ is relatively immutable in $\Upsilon$, where $\mc{Q} = \{ \langle Q_i \rangle \}_i$.
\end{proposition}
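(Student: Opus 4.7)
The plan is to follow the strategy of \cite[Proposition 7.8]{GrovesWilton10}, adapted to the relative setting via Proposition \ref{p:immutable on finite ball}.  Write $H=\langle\mc{A}\rangle$ and $\mc{Q}=\{\langle Q_i\rangle\}_{i=1}^k$.  Proposition \ref{p:immutable on finite ball} characterises relative immutability as the existence of some $D\ge 1$ for which $\Hom_{\mc{Q}}(H,\Upsilon)$ comprises only finitely many $\Upsilon$--conjugacy classes of homomorphisms injective on the ball $B_H(D)$.  The plan is to encode, for each $D$, this set of homomorphisms as the solutions of a finite system $\mathcal{S}_D$ of equations and inequations with coefficients in $\Upsilon$, and then to apply Theorem \ref{t:fin many conjugacy classes} to $\mathcal{S}_D$, dovetailed over $D\ge 1$.

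Concretely, I would introduce variables $\x=(x_a)_{a\in\mc{A}}$ for the images of the generators, and $\y=(t_i)_{i=1}^k$ to witness the conjugations required by membership in $\Hom_{\mc{Q}}(H,\Upsilon)$.  Using the solvable word problem in $\Upsilon$, I would partition $B_{F(\mc{A})}(D)$ into those $w$ equal to $1$ in $\Upsilon$ and those not, and then form $\mathcal{S}_D$ from the equations $w(\x)=1$ for $w$ in the first set, the inequations $w(\x)\ne 1$ for $w$ in the second set, and, for each $i$ and each $q\in Q_i$, the equation $q(\x)=t_i\,q\,t_i^{-1}$, where the $q$ on the right-hand side is the prescribed element of $\Upsilon$.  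A solution then records an assignment of generators of $H$ to $\Upsilon$ that is well-defined and injective on $B_H(D)$, together with elements $t_i$ witnessing that this assignment acts by conjugation on each $\langle Q_i\rangle$.  The algorithm runs Theorem \ref{t:fin many conjugacy classes} on each $\mathcal{S}_D$ in parallel and terminates as soon as any instance reports finitely many conjugacy classes.

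If the algorithm terminates at some $D$, then the finite list of solutions produced forgets down to a finite list of conjugacy representatives of maps in $\Hom_{\mc{Q}}(H,\Upsilon)$ injective on $B_H(D)$, so $(H,\mc{Q})$ is relatively immutable by Proposition \ref{p:immutable on finite ball}.  Conversely, if $(H,\mc{Q})$ is relatively immutable, Proposition \ref{p:immutable on finite ball} produces some $D_0$ for which $\Hom_{\mc{Q}}(H,\Upsilon)$ has only finitely many conjugacy classes of homomorphisms injective on $B_H(D_0)$.  The main subtlety is that a solution to $\mathcal{S}_D$ only involves the relations of $H$ of length at most $D$, so need not extend to a genuine homomorphism $H\to\Upsilon$.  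This gap is closed by invoking equational noetherianity of $\Upsilon$ (which holds for toral relatively hyperbolic groups): there is a finite subsystem of the relations of $H$ whose satisfaction forces all of them in $\Upsilon$, and choosing $D_1$ large enough to see this subsystem ensures that for $D\ge D_1$ the solutions of $\mathcal{S}_D$ are exactly the genuine homomorphisms in $\Hom_{\mc{Q}}(H,\Upsilon)$ injective on $B_H(D)$, paired with their conjugator tuples.  For $D\ge\max(D_0,D_1)$ these form finitely many conjugacy classes, so Theorem \ref{t:fin many conjugacy classes} halts.  The principal obstacle is precisely this passage from ``relations of length at most $D$'' to ``all relations of $H$,'' which is where equational noetherianity is essential; a secondary point to verify is that the auxiliary variables $\y$ do not introduce spurious conjugacy classes beyond the already finite list of conjugacy classes of maps in $\Hom_{\mc{Q}}(H,\Upsilon)$, which follows from the controlled structure of centralisers in the toral relatively hyperbolic setting.
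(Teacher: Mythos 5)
Your overall architecture (dovetail over $D$, encode injectivity on $B_H(D)$ by equations and inequations read off from the word problem in $\Upsilon$, add conjugator variables $t_i$ for the $\mc{Q}$--constraint, and close the gap between ``relations of length $\le D$'' and ``all relations of $H$'' via equational noetherianity) is exactly the paper's strategy. But there is a genuine gap at the step where you apply Theorem \ref{t:fin many conjugacy classes} to the full system $\mathcal{S}_D$ in the variables $(\x,\y)$. That theorem counts conjugacy classes of \emph{solutions}, i.e.\ of the whole tuple $(\x,\y)$, whereas relative immutability is a statement about conjugacy classes of the \emph{projections} $\x$. These do not coincide: for a fixed solution $\x_0$ whose image is nonabelian (so that $Z_\Upsilon(\langle\x_0\rangle)$ is trivial, by CSA), the set of valid values of $t_i$ is a coset of $Z_\Upsilon(\langle Q_i\rangle)$, which is an infinite free abelian group whenever $\langle Q_i\rangle$ is nontrivial --- the only case of interest. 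Each choice of $t_i$ in that coset yields a solution $(\x_0,\y)$ lying in a distinct $\Upsilon$--conjugacy class, because any conjugator would have to centralize the nonabelian image of $\x_0$. So $\mathcal{S}_D$ has infinitely many conjugacy classes of solutions even when $(H,\mc{Q})$ is relatively immutable, and your algorithm would fail to terminate on precisely the inputs where it must. Your closing remark that the auxiliary variables introduce no spurious classes ``by the controlled structure of centralisers'' has the situation backwards: the structure of centralisers (infinite free abelian) is exactly what produces infinitely many spurious classes.

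This is why the paper cannot use Theorem \ref{t:fin many conjugacy classes} directly and instead invokes Proposition \ref{prop:Short solutions with conjugation}, in which the variables $\x$ and $\y$ are treated asymmetrically: the shortness/rational constraint $\Omega$ is imposed only on (acceptable lifts of) $\x$, and the finiteness being detected is that of the conjugacy classes in the \emph{image of the projection} of the solution set onto the $\x$--coordinates. Concretely, the paper's algorithm repeatedly asks whether system (3) has a solution whose $\x$--part is fairly short, records such an $\x_0$ if so, appends the inequation $\x\neq\x_0$, and iterates; termination of this loop certifies finiteness of the set of conjugacy classes of $\x$--projections, which is the correct invariant. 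To repair your argument you would need to replace your single appeal to Theorem \ref{t:fin many conjugacy classes} with this projection-aware routine (or an equivalent one that quotients out the $Z_\Upsilon(\langle Q_i\rangle)$--ambiguity in the $t_i$ before counting).
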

\begin{proof}
Let $H = \langle \mc{A} \rangle\leq \Upsilon$.  We run the following arguments in parallel for increasing values of a positive integer $D$.
For such a positive integer $D$, let $\wh{H}_D$ be the group generated by $\mc{A}$ with relations equal to all the loops in the ball of radius $D$ in $H$.  These relations can be enumerated using a solution to the word problem in $\Upsilon$.  Let $\{ Q^D_1, \ldots , Q^D_k \}$ be the tuples of elements of $\wh{H}_D$ corresponding to $\{ Q_1, \ldots , Q_k \}$.

A homomorphism $\wh{H}_D \to \Upsilon$ which is injective on the ball of radius $D$ in $\wh{H}_D$ can be characterized by a finite system of equations and inequations over $\Upsilon$.  The equations correspond to the relations in $\wh{H}_D$, and the inequations separate the elements of the ball of radius $D$ in $\wh{H}_D$.  To be precise, we take variables $\x$ corresponding to the elements of $\mc{A}$, a set of elements $\Sigma_D(\x)$ in the free group $F(\x)$ on $\x$ corresponding to the loops in the ball of radius $D$ in $\wh{H}_D$, and a set of elements $\Lambda_D(\x)\subseteq F(\x)$ corresponding to the differences between distinct elements of the ball of radius $D$ in $\wh{H}_D$.  Homomorphisms in $\Hom(\wh{H}_D,\Upsilon)$ which are injective on the ball of radius $D$ are then in natural bijection with solutions to the system
\begin{equation}\label{eqn:absolute}
\Sigma_D(\x)=1~,~\Lambda_D(\x)\neq 1
\end{equation}
over $\Upsilon$.

We now need to restrict attention to elements of $\Hom_{\mc{Q}_D}(\wh{H}_D,\Upsilon)$, where $\mc{Q}_D=\{\langle Q_1^D\rangle,\ldots,\langle Q_k^D\rangle\}$.    To do this, we add one variable $y_i$ for each $i$, together with extra equations which say that, for each $i$, the conjugate of each element of $Q^D_i$ by $y_i$ is equal to the corresponding element in $Q_i$.  To be precise, for each $i$, let $Q_i=\{q_{ij}\}\subseteq\Upsilon$ and for each $j$ let $\eta_{ij}(\x)$ be the corresponding element of $Q^D_i\subseteq \wh{H}_D$ (written as a word in the generators $\x$).  For each $i$ and $j$ we add the equation with coefficients
\begin{equation}
\Theta_D(\x,\y):=y_i\eta_{ij}(\x)y_i^{-1}q_{ij}^{-1}=1
\end{equation}
over $\Upsilon$ to the system (\ref{eqn:absolute}), to obtain a system that is denoted by (3).

We consider the projection map from the set of solutions to the system (3) to the set of solutions to the system (1) that we obtain by forgetting the values of the variables $y_i$.   When next argue that we can recognize when the image of this projection consists of finitely many $\Upsilon$--conjugacy classes.  

As long as $D\geq 8$ the system of equations (\ref{eqn:absolute}) forces the ball of radius 8, and therefore any solution necessarily corresponds to a compatible homomorphism.  We now apply the algorithm from Proposition \ref{prop:Short solutions with conjugation} to the system (3).  If it terminates with `No', there is no very short solution $\x$ to the system (\ref{eqn:absolute}) in the image of the projection map from the solutions to (3).   In particular, there are no conjugacy classes in the image of the projection, by Lemma \ref{lemma:Short homs}, and it follows that $\Hom_{\mc{Q}_D}(\wh{H}_D,\Upsilon)$ contains {no} conjugacy classes that are injective on the ball of radius $D$.

If the algorithm terminates with `Yes', then there is a fairly short solution to (1) in the image of the projection, and a simpleminded search will find a solution $(\x_0,\y_0)$ to (3) which projects to a fairly short solution to (1).  We now add further inequations (with coefficients) to (1), stipulating that $\x\neq \x_0$, and repeat the procedure.  

Since the natural quotient map $\wh{H}_D\to H$ induces an $\Upsilon$--equivariant injection $\Hom_{\mc{Q}}(H,\Upsilon)\into\Hom_{\mc{Q}_D}(\wh{H}_D,\Upsilon)$, if this algorithm terminates then there is a $D$ such that $\Hom_{\mc{Q}}(H,\Upsilon)$ contains only finitely many conjugacy classes that are injective on the ball of radius $D$, and hence $(H,\mc{Q})$ is relatively immutable. 

Conversely, suppose that  $(H,\mc{Q})$ is relatively immutable.  Then for large enough $D$ the conclusion of Proposition \ref{p:immutable on finite ball} is satisfied.
Since $\Upsilon$ is equationally Noetherian, for sufficiently large $D$ every homomorphism from $\wh{H}_D$ to $\Upsilon$ factors through the natural quotient map from $\wh{H}_D$ to $H$.  Therefore, for sufficiently large $D$ we know that the projection of the solutions of system (3) has only finitely many conjugacy classes.  It follows that the algorithm terminates exactly when $(H,\mc{Q})$ is relatively immutable, as required.
\end{proof}

In our enumeration of $\Gamma$--limit groups using models in Section \ref{s:Enumeration}, we will require the following elementary observations.

\begin{lemma} \label{l:still rel immutable}
Suppose that $\Upsilon$ is a toral relatively hyperbolic group, that $H \le \Upsilon$ and that $\mc{P}$ is a collection of nontrivial abelian subgroups of $H$.  If $(H,\mc{P})$ is relatively immutable then so is $(H,\mc{P'})$, where $\mc{P}' = \left\{ Z_H(P) \mid P \in \mc{P} \right\}$.

Also, if $(H,\mc{P})$ is relatively immutable, then so is $(H,\mc{P}_0)$, where $\mc{P}_0$ consists of a collection of $H$-conjugacy representatives of the elements of $\mc{P}$.
\end{lemma}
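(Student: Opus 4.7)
The plan is to prove both statements by direct comparison of the relevant sets of homomorphisms; no appeal to deeper machinery (limiting actions, the Rips machine, or Proposition \ref{p:immutable on finite ball}) is needed beyond the definition of $\Hom_{\mc{P}}(H, \Upsilon)$ itself. The point is that relative immutability is defined in terms of conjugacy classes of injective elements of $\Hom_{\mc{P}}(H, \Upsilon)$, so any set-theoretic inclusion between these $\Hom$-sets will automatically transfer finiteness in the appropriate direction.

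For the first assertion, I would exploit the fact that each $P \in \mc{P}$ is abelian and hence sits inside its centralizer $Z_H(P) \in \mc{P}'$. Given any $\lambda \in \Hom_{\mc{P}'}(H, \Upsilon)$ and any $P \in \mc{P}$, the definition supplies some $g \in \Upsilon$ such that $\lambda|_{Z_H(P)} = \mathrm{Inn}_g|_{Z_H(P)}$; restricting to the subgroup $P \subseteq Z_H(P)$ shows $\lambda|_P = \mathrm{Inn}_g|_P$, so $\lambda \in \Hom_{\mc{P}}(H, \Upsilon)$. This yields the inclusion $\Hom_{\mc{P}'}(H, \Upsilon) \subseteq \Hom_{\mc{P}}(H, \Upsilon)$, and since $\Upsilon$-conjugacy on both sides is induced by the same $\Upsilon$-action, finiteness of conjugacy classes of injective homomorphisms passes from the superset to the subset. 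Hence $(H, \mc{P}')$ is relatively immutable.

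For the second assertion, I would actually establish the stronger equality $\Hom_{\mc{P}_0}(H, \Upsilon) = \Hom_{\mc{P}}(H, \Upsilon)$, from which the claim is immediate. The inclusion $\Hom_{\mc{P}}(H, \Upsilon) \subseteq \Hom_{\mc{P}_0}(H, \Upsilon)$ is trivial since $\mc{P}_0 \subseteq \mc{P}$. For the reverse, take an arbitrary $P \in \mc{P}$ and write $P = hP_0h^{-1}$ with $P_0 \in \mc{P}_0$ and $h \in H$. If $\lambda \in \Hom_{\mc{P}_0}(H, \Upsilon)$ and $\lambda|_{P_0} = \mathrm{Inn}_g|_{P_0}$ for some $g \in \Upsilon$, a routine calculation (writing a general element of $P$ as $h p_0 h^{-1}$ and using that $\lambda$ is a homomorphism) gives $\lambda|_P = \mathrm{Inn}_{\lambda(h)gh^{-1}}|_P$. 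The conjugating element $\lambda(h)gh^{-1}$ lies in $\Upsilon$ precisely because $h \in H \le \Upsilon$, so $\lambda \in \Hom_{\mc{P}}(H, \Upsilon)$. The equality of $\Hom$-sets makes relative immutability for the two families literally the same condition. The only mild subtlety — really the whole content of the argument — is that the inner-automorphism witness depends on the subgroup and must be tracked carefully when conjugating across the ambient group $H$; there is no substantive obstacle.
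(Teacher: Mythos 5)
Your proof is correct: both inclusions of $\Hom$-sets ($\Hom_{\mc{P}'}(H,\Upsilon)\subseteq\Hom_{\mc{P}}(H,\Upsilon)$ via $P\le Z_H(P)$ for abelian $P$, and $\Hom_{\mc{P}_0}(H,\Upsilon)=\Hom_{\mc{P}}(H,\Upsilon)$ via the conjugating element $\lambda(h)gh^{-1}\in\Upsilon$) are verified accurately, and finiteness of conjugacy classes of injective homomorphisms transfers exactly as you say. The paper omits the proof as an elementary observation, and your argument is precisely the intended one.
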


\section{Calculation of quasi-convex enclosures} \label{s:Calc QCE}

Suppose that $\Upsilon$ is a toral relatively hyperbolic group.  In this section, we explain how to take a finite subset $\mc{A}$ of $\Upsilon$, along with finitely many finite subsets $Q_1, \ldots, Q_k$ (all given as words in the generators of $\Upsilon$) and algorithmically calculate the quasi-convex enclosure of $\langle \mc{A} \rangle$ relative to $\{ \langle Q_1 \rangle , \ldots, \langle Q_k \rangle \}$.  Let $\mc{Q} = \{ \langle Q_1 \rangle , \ldots, \langle Q_k \rangle \}$.


The main difficulty is resolved by Theorem \ref{t:Find Splittings}, which asserts that we can compute \emph{relative} Grushko and JSJ decompositions for toral relatively hyperbolic groups. The absolute version of this theorem follows immediately from \cite[Theorem 1.4]{dahmanigroves2} and \cite[Theorem D]{dahmanigroves1}.    However, the strategy employed in \cite{dahmanigroves2} to compute Grushko decompositions (using connectivity of the Bowditch boundary) does not extend easily to the setting of relatively one-ended groups.  Instead, we use the following theorem, which enables us to handle free and abelian splittings simultaneously.

\begin{theorem}\cite[Theorem 18]{W2012}\label{thm:Shenitzer}
Let $\Upsilon$ be finitely generated, and the fundamental group of a graph of groups with infinite cyclic edge groups.  Then $\Upsilon$ is one-ended if and only if every vertex group is freely indecomposable relative to the incident edge groups.
\end{theorem}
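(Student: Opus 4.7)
My plan is to handle the two implications separately using Bass--Serre theory; let $\mathcal{G}$ denote the given graph of groups (with infinite cyclic edge groups) and $S$ its Bass--Serre tree. For the ``only if'' direction, if some vertex group $V$ of $\mathcal{G}$ admits a nontrivial free splitting $V = V_1 * V_2$ relative to the incident edge groups, I would refine $\mathcal{G}$ by blowing up the vertex labelled $V$ into an edge with trivial edge group joining new vertices labelled $V_1$ and $V_2$, redirecting each incident edge of $V$ to whichever factor contains its edge group (possible by the relative hypothesis). The refined graph of groups still has $\Upsilon$ as its fundamental group, and a short Bass--Serre verification using $V_1,V_2\neq 1$ shows that the new trivial edge provides a nontrivial free splitting of $\Upsilon$, so $\Upsilon$ is not one-ended.

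For the ``if'' direction, suppose $\Upsilon$ is not one-ended, so $\Upsilon$ admits a nontrivial free splitting (immediate in the torsion-free setting of this paper; in general via Stallings' theorem). The key choice is the splitting tree: I would take $T$ to be the Bass--Serre tree of the Grushko decomposition of $\Upsilon$, which has trivial edge stabilizers and no global fixed point. Crucially, every infinite cyclic subgroup of $\Upsilon$ is freely indecomposable and therefore elliptic in $T$; in particular every edge group of $\mathcal{G}$ fixes a vertex of $T$.

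The main step is to produce a vertex group of $\mathcal{G}$ that acts non-elliptically on $T$. I would argue by contradiction: if every vertex group were elliptic, one could build a $\Upsilon$-equivariant map $\phi\co S \to T$ by sending each vertex $s$ of $S$ to a chosen fixed point of $\mathrm{Stab}(s)$. For any edge $e=(s_1,s_2)$ of $S$ with cyclic stabilizer $E$, the group $E$ would fix both $\phi(s_1)$ and $\phi(s_2)$; if these differed, $E\cong\mathbb{Z}$ would fix a nondegenerate arc of $T$ and hence an edge, contradicting triviality of edge stabilizers in $T$. Thus every edge of $S$ would collapse under $\phi$, forcing $\phi(S)$ to be a single $\Upsilon$-fixed vertex of $T$ and contradicting nontriviality of the action.

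Once such a vertex group $V$ is identified, its minimal invariant subtree $T_V\subseteq T$ exhibits a nontrivial free splitting of $V$, and it remains to verify that this splitting is relative to the incident edge groups. For each incident cyclic edge group $E\subseteq V$, the subtree $T_V$ is $E$-invariant, and ellipticity in $T$ transfers to $T_V$ via the $E$-equivariant closest-point projection $T\to T_V$: any $E$-fixed point in $T$ projects to an $E$-fixed point in $T_V$. Hence $E$ lies in a single factor of the induced decomposition, contradicting freely-indecomposability-relative-to-incident-edges. I expect the main obstacle to be precisely this final transfer step: for an arbitrarily chosen free-splitting tree, cyclic edge groups of $\mathcal{G}$ could act hyperbolically and destroy the relative condition, so working with the Grushko tree (where all freely indecomposable subgroups, and in particular all cyclic subgroups, are automatically elliptic) is what makes the argument close.
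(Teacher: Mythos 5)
The ``only if'' direction is fine, and so is the intermediate step of your ``if'' direction (producing a vertex group of $\mathcal{G}$ that acts non-elliptically on a free-splitting tree $T$). But there is a fatal gap exactly at the point you flag as the crux: the claim that ``every infinite cyclic subgroup of $\Upsilon$ is freely indecomposable and therefore elliptic in [the Grushko tree] $T$'' is false. The Kurosh-type statement for an action on a tree with trivial edge stabilizers says that a subgroup decomposes as a free product of elliptic pieces \emph{and a free group}; an infinite cyclic subgroup can therefore act freely (hyperbolically) on such a tree without contradicting its free indecomposability. Concretely, every nontrivial element of a nonabelian free group acts hyperbolically on its Grushko tree (on which the whole group acts freely). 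What is true is that freely indecomposable subgroups that are \emph{not} infinite cyclic are elliptic in the Grushko tree --- and the cyclic case is precisely the one you need.

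This is not a repairable technicality. Consider $\Upsilon=F(a,b,c)$ with the splitting $F(a,b)*_{\langle[a,b]\rangle}\bigl(\langle[a,b]\rangle*\langle c\rangle\bigr)$, whose edge group is infinite cyclic. Here $\Upsilon$ is not one-ended, and the conclusion of the theorem is witnessed by the second vertex group. Your argument, applied to the non-elliptic vertex group $V=F(a,b)$, would conclude that $F(a,b)$ splits freely relative to $\langle[a,b]\rangle$, which is false: $[a,b]$ lies in no proper free factor of $F(a,b)$ (for instance, the double of $F(a,b)$ along $[a,b]$ is a closed surface group, hence one-ended). The element $[a,b]$ acts hyperbolically on the Grushko tree of $F(a,b,c)$, so the ``transfer'' step never gets started for this vertex. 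The genuine content of Shenitzer-type theorems is exactly the case in which some edge group of $\mathcal{G}$ is hyperbolic in the chosen free splitting; handling it requires choosing $T$ to minimize a suitable complexity and running a folding/counting argument. Note that the present paper does not prove this statement at all --- it cites \cite[Theorem 18]{W2012}, where that more involved argument is carried out.
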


Theorem \ref{thm:Shenitzer} generalizes a result of Shenitzer, and various similar statements and proofs have appeared in the literature \cite{shenitzer_decomposition_1955,diao_grushko_2005,FuPap06,louder_scott_????,touikan15}. The first author learned of Theorem \ref{thm:Shenitzer} from Sela, who intended to use it to prove Theorem \ref{t:Find Splittings} in the setting of torsion-free hyperbolic groups, as part of his unpublished proof of the isomorphism problem for those groups, and Fujiwara explained a proof to the second author.

\begin{theorem} \label{t:Find Splittings}
There is an algorithm which takes as input a finite presentation $\langle \mc{A} \mid \mc{R} \rangle$ of a toral relatively hyperbolic group $\Upsilon$ and a finite collection of finite tuples $\{Q_1,\ldots , Q_n\}$ of elements of $\Upsilon$ and outputs a Grushko decomposition for $\Upsilon$ relative to the subgroups $\mc{Q}_i=\langle Q_i\rangle$ and, for each free factor $\Upsilon_0$ which is freely indecomposable relative to those $\mc{Q}_i$ conjugate into $\Upsilon_0$, it also outputs the relative primary JSJ decomposition for $\Upsilon_0$.
\end{theorem}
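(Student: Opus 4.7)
The plan is to bootstrap from the absolute algorithms of \cite[Theorem 1.4]{dahmanigroves2} and \cite[Theorem D]{dahmanigroves1}, using Theorem \ref{thm:Shenitzer} as the key tool for handling free and abelian splittings together in the relative setting. The main preliminary observation is that, given any explicit graph-of-groups decomposition of $\Upsilon$ with abelian edge groups, we can algorithmically test whether a given finitely generated subgroup $\langle Q_i\rangle$ is elliptic---this follows from the decidability of the word and membership problems for toral relatively hyperbolic groups (e.g.\ via \cite{dahmani09}), combined with the explicit action on the Bass--Serre tree.

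First, I would compute the absolute Grushko decomposition of $\Upsilon$ via \cite[Theorem 1.4]{dahmanigroves2} and, on each freely indecomposable factor, the absolute primary JSJ via \cite[Theorem D]{dahmanigroves1}; this yields an initial graph of groups $\Delta_0$ with abelian edge groups. Next I would modify $\Delta_0$ to ensure each $\mc{Q}_i$ is elliptic: whenever a $\mc{Q}_i$ is not elliptic, collapse the edges in its minimal invariant subtree in the Bass--Serre tree (only finitely many orbits). Since the edge groups of $\Delta_0$ are abelian, a $\mc{Q}_i$ with non-abelian image fails to be elliptic only because of genuine splittings; collapsing yields a coarser decomposition in which it becomes elliptic. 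Now each vertex group $V$ is itself toral relatively hyperbolic (Proposition \ref{p:Vertex groups RH}), equipped with a family $\mathcal{E}_V$ of incident edges and a subfamily $\mc{Q}_V$ of those $\mc{Q}_i$ conjugated into it. I would apply \cite[Theorem 1.4]{dahmanigroves2} to $V$ to find any absolute free splittings, test whether $\mathcal{E}_V\cup\mc{Q}_V$ is elliptic in such a splitting, and, if not, collapse as before. The essential point is that, by the relative form of Theorem \ref{thm:Shenitzer}, once no such further refinement exists at any vertex, the graph of groups we have produced is the relative Grushko decomposition: each vertex group is then freely indecomposable relative to its incident edges and its $\mc{Q}_V$, hence the ambient group is one-ended relative to $\mc{Q}$ at every remaining freely indecomposable stratum. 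Finally, for each such relatively freely indecomposable factor $\Upsilon_0$, I would apply \cite[Theorem D]{dahmanigroves1} to compute the absolute primary JSJ and then adjust by collapses/refinements (again using the ellipticity test) to ensure the relevant $\mc{Q}_i$'s are elliptic, yielding the canonical primary JSJ relative to $\mc{Q}$ by the uniqueness statement of Proposition \ref{p:JSJ props}.

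The main obstacle, as flagged in the excerpt, is precisely Step 2: the Bowditch-boundary strategy of \cite{dahmanigroves2} does not obviously detect relative free splittings, so one cannot simply re-run that algorithm on $(V,\mathcal{E}_V\cup\mc{Q}_V)$. Theorem \ref{thm:Shenitzer} resolves this by converting the search for a relative free splitting into a search for an \emph{absolute} cyclic-edged refinement at a vertex group whose vertex stabilizers fail to be freely indecomposable rel the newly incident edges---an object that the combined tools \cite{dahmanigroves1,dahmanigroves2} can find. The other, milder obstacle is ensuring that the iterated refinement terminates; this is guaranteed by the Louder--Touikan strong accessibility theorem (Theorem \ref{t: Strong accessibility}), which bounds the depth of any hierarchy of splittings into freely indecomposable and rigid pieces.
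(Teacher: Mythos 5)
There is a genuine gap, and it sits exactly at the point you flag as ``the main obstacle.'' Your algorithmic engine is: compute an \emph{absolute} Grushko/JSJ decomposition of a vertex group $V$, test whether $\mathcal{E}_V\cup\mc{Q}_V$ is elliptic in the splitting so obtained, and collapse if not. This does not certify relative indecomposability. Failure of ellipticity of $\mc{Q}_V$ in one particular maximal absolute splitting does not preclude the existence of a \emph{different} nontrivial splitting of $V$ relative to $\mathcal{E}_V\cup\mc{Q}_V$. For example, take $V=F_2=\langle a\rangle\ast\langle b\rangle$ and $\mc{Q}_V=\{\langle ab\rangle\}$: the subgroup $\langle ab\rangle$ is not elliptic in the standard splitting, so your procedure collapses and declares $V$ freely indecomposable relative to $\mc{Q}_V$, yet $F_2=\langle ab\rangle\ast\langle b\rangle$ is a nontrivial relative free splitting. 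The same defect infects your last step: the relative primary JSJ of $\Upsilon_0$ is not obtained from the absolute one by collapsing the edges where some $\mc{Q}_i$ fails to be elliptic (e.g.\ a $\mc{Q}_i$ carried by a non-simple curve on a QH vertex forces the surface to be cut up, not the adjacent edges to be collapsed). In short, ``collapse until elliptic'' produces \emph{a} relative splitting, but the theorem requires the \emph{maximal} (Grushko) and \emph{canonical} (JSJ) ones, and maximality is precisely what your procedure cannot detect.

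The paper's proof supplies the missing certificate by a different mechanism: it runs in parallel (i) a naive search for nontrivial relative \emph{primary} splittings (the relative adaptation of \cite[Theorem 5.15]{dahmanigroves1}) and (ii) the algorithm of Proposition \ref{prop: Detect rel immutable}, which terminates exactly when a vertex group admits \emph{no} nontrivial relative primary splitting (relative immutability, via Lemma \ref{l:rel immutable if no splitting}). This is the genuinely new ingredient of Section \ref{s:Rel Imm}, built from equations with rational constraints, and it is what replaces the Bowditch-boundary test in the relative setting. One iterates until all vertex groups are relatively immutable (termination by \cite{BF:access}, not by Louder--Touikan, which the paper reserves for the enclosure computation), and only \emph{then} invokes Theorem \ref{thm:Shenitzer}, a posteriori, to conclude that collapsing the abelian edges of the terminal primary decomposition yields the relative Grushko decomposition: a relative free splitting of a vertex group would force all incident edge groups to be elliptic and hence contradict relative immutability. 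Your appeal to Shenitzer is in the right spirit for this last deduction, but without the relative-immutability certificate the iteration has no correct stopping condition and the output need not be the relative Grushko or relative JSJ. Finally, the paper must also reassemble sockets (via \cite[Proposition 6.3]{dahmanigroves1}) and pass to the tree of cylinders (Proposition \ref{prop:Compute tree of cylinders}) to get the \emph{canonical} JSJ; these steps are absent from your outline but are comparatively routine.
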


The proof of Theorem \ref{t:Find Splittings} follows the strategy of the proof of \cite[Theorem D]{dahmanigroves1}, with two important modifications.  We outline the strategy and explain the modifications. 

The algorithm in parallel attempts to find non-trivial primary relative splittings of $\Upsilon$ (using the obvious adaptation of \cite[Theorem 5.15]{dahmanigroves1} to the relative setting) and to prove that $\Upsilon$ admits no such splittings using the algorithm from Proposition \ref{prop: Detect rel immutable}.  If a non-trivial splitting is found then we pass to the vertex groups, add the incident edge groups to the relative structure, and repeat.  By \cite{BF:access}, this procedure eventually terminates in a primary decomposition $\Lambda$ for $\Upsilon$ in which every vertex group is immutable relative to those $\mc{Q}_i$ that are conjugate into it and also to the incident edge groups.

Collapsing the non-trivial edge groups of $\Lambda$, one obtains a free splitting $\Lambda_{G}$ of $\Upsilon$ in which every vertex group is the fundamental group of a graph of groups with non-trivial abelian edge groups and relatively immutable vertex groups.  Note that $\Lambda_G$ is the relative Grushko decomposition of $\Upsilon$.  Indeed, if some vertex group $\Upsilon_v$ were to admit a relative free splitting then, since non-cyclic abelian subgroups would necessarily be elliptic in such a splitting, it would follow from Theorem \ref{thm:Shenitzer} that \emph{every} edge group incident at $\Upsilon_v$ would be elliptic.  Hence, $\Upsilon_v$ would admit a non-trivial relative free splitting, contradicting the relative immutability of $\Upsilon_v$.

Since we have now found the Grushko decomposition, we restrict attention to a freely indecomposable vertex group, which we shall for notational brevity also call $\Upsilon$.  The splitting $\Lambda$ of $\Upsilon$ that we have found has the property that every vertex group is relatively immutable; no vertex group admits a non-trivial relative primary splitting.  Such a decomposition is obtained from a relative primary JSJ of $\Upsilon$ by cutting each socket vertex along a maximal non-peripheral essential multicurve in the associated surface.  The procedure described after \cite[Proposition 6.3]{dahmanigroves1} explains how to reassemble the socket pieces into a primary JSJ decomposition as required.

Finally, we need to compute the \emph{canonical} primary JSJ decomposition $\Delta$ associated with $\Lambda$ by computing the associated tree of cylinders.  We explain how to do this in the following result, which completes the proof of Theorem  \ref{t:Find Splittings}.

\begin{proposition}[cf. \cite{dahmaniguirardel:iso}, Lemma 2.34]\label{prop:Compute tree of cylinders}
There is an algorithm that takes as input a presentation for a toral relatively hyperbolic group $\Upsilon$ and a primary graph-of-groups decomposition $\Lambda$ for $\Upsilon$ and outputs the decomposition $\Delta$ corresponding to the associated (collapsed) tree of cylinders.
\end{proposition}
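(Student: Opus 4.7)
The plan is to unpack the combinatorial construction of the (collapsed) tree of cylinders and show that every step is algorithmically accessible using the tools developed earlier. Recall that the tree of cylinders $T_c$ of the Bass--Serre tree $T$ of $\Lambda$ is determined by the equivalence on edges where $\tilde{e}_1\sim\tilde{e}_2$ iff their stabilizers commute; since $\Upsilon$ is CSA, this happens iff both edge groups lie in a common maximal abelian subgroup, which is canonically the centralizer of any non-trivial element of either. The stabilizer of a cylinder is then precisely this maximal abelian subgroup, and $\Delta$ is obtained from the collapsed tree of cylinders $T_c^*$, which is formed from $T_c$ by collapsing edges whose stabilizer equals an endpoint group.

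The first step is to enumerate the edges of $\Lambda$ and, for each $e$, compute a non-trivial element $c_e\in G_e$ as a word in the generators of $\Upsilon$. Since edge groups are abelian in a torsion-free CSA group (hence free abelian), I would then invoke Theorem \ref{t:Compute Z(g)} to compute generators of $A_e:=Z_\Upsilon(c_e)$, the unique maximal abelian subgroup of $\Upsilon$ containing $G_e$.

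The second step determines which pairs of edges $e_1,e_2$ of $\Lambda$ give rise to the same $\Upsilon$--orbit of cylinders, i.e., when $A_{e_1}$ and $A_{e_2}$ are $\Upsilon$--conjugate. Fixing non-trivial elements $a_i\in A_{e_i}$, this reduces via CSA (and malnormality of maximal abelian subgroups) to the existential question: does there exist $g\in\Upsilon$ with $[a_2,\, g a_1 g^{-1}]=1$? This is decidable by Dahmani's algorithm \cite[Theorem 0.1]{dahmani09}, and simpleminded enumeration produces explicit witnesses $g$ for positive instances. Once this equivalence data is in hand, I would construct the graph of groups underlying $\Delta$ by introducing a new abelian vertex $y_C$ (with vertex group $A_C$) for each equivalence class $C$, rerouting each edge $e\in C$ to connect its non-abelian endpoint in $\Lambda$ to $y_C$ (using the conjugating witness to specify the inclusion $G_e\hookrightarrow A_C$), and finally collapsing any of these new edges whose edge group coincides with an adjacent vertex group.

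The main obstacle is the conjugacy test in the second step; the reduction to Dahmani's existential solution is the key ingredient that replaces the hyperbolic-case argument in \cite[Lemma 2.34]{dahmaniguirardel:iso}. The remainder of the algorithm is a finite rewriting of $\Lambda$ using standard Bass--Serre bookkeeping, and it produces precisely the decomposition corresponding to the collapsed tree of cylinders.
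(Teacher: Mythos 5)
Your algorithm for the decidable ingredients is the same as the paper's: compute the maximal abelian subgroup containing each edge group as a centralizer (Theorem \ref{t:Compute Z(g)}), and decide when two edges lie in $\Upsilon$--conjugate cylinders by encoding commutation-after-conjugation as an equation over $\Upsilon$ and invoking Dahmani's solution to the existential theory. Up to that point the two arguments coincide.

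The gap is in the final assembly, which you dismiss as ``standard Bass--Serre bookkeeping'' but which, as written, does not output the collapsed tree of cylinders. Two problems. First, rerouting each edge $e$ of $\Lambda$ while ``keeping'' the inclusion $G_e\hookrightarrow A_C$ retains the old edge group $G_e$; but in the tree of cylinders the edge joining a non-abelian vertex $x$ to the cylinder vertex has stabilizer $\Delta_x\cap A_C$, which by commutative transitivity is the centralizer $Z_{\Delta_x}(G_e)$ --- a possibly strictly larger group (this is exactly what makes $\Delta$ satisfy property (3) of Proposition \ref{p:JSJ props}, that edge groups are maximal abelian in the adjacent non-abelian vertex group). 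Second, if two edges $e_1,e_2$ incident at the same non-abelian vertex $x$ have centralizers that are conjugate \emph{in $\Delta_x$}, they determine a single $\Delta_x$--orbit of adjacent cylinders and hence a single edge of the quotient of $T_c$; your construction produces two parallel edges, which in general changes the fundamental group of the resulting graph of groups. The paper's proof handles both points in one stroke: at each non-abelian vertex $x_i$ it partitions the centralizers $Z_{\Delta_{x_i}}(\Lambda_e)$ of the incident edge groups into $\Delta_{x_i}$--conjugacy classes, and declares these classes (with the centralizers as edge groups) to be the edges of $\Delta$ incident at $x_i$, each attached to the unique cylinder vertex $w_j$ into which it is conjugate. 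Both refinements are algorithmic with the tools you already cite (centralizers and equations in the toral relatively hyperbolic vertex groups), so the fix is routine, but without it the output is not the decomposition $\Delta$ claimed in the statement.
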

\begin{proof}
Using the word problem in $\Upsilon$, we can determine which vertex groups of $\Lambda$ are abelian.  We list the non-abelian vertices $x_1,\ldots,x_k$ together with presentations of their associated stabilizers $\Delta_{x_i}=\Lambda_{x_i}$.  For each $i$, we also record the images of incident edge maps in $\Lambda_{x_i}$. 

We next compute conjugacy representatives of the cylinders of $\Lambda$ as follows.   For each pair of edges $e_1$ and $e_2$, we determine whether or not there exists $\gamma\in\Upsilon$ such that $\gamma\Lambda_{e_1}\gamma^{-1}$ commutes with $\Lambda_{e_2}$, by encoding this as a system of equations over $\Upsilon$.  If so, we write $e_1\sim e_2$.  We now introduce vertices $w_1,\ldots,w_l$, one for each equivalence class of edges $[e_j]$.  For each $w_j$, we choose a representative edge $e_j$ and label $w_j$ with the centralizer $\Delta_{w_j}=C(\Lambda_{e_j})$.  

Finally, we need to describe the edges of $\Delta$.  For each non-abelian vertex $x_i$, we partition the centralizers of incident edge groups into $\Delta_{x_i}$--conjugacy classes. Each such class is conjugate into a unique cylinder corresponding to some $w_j$.  These define the edge groups and the attaching maps.
\end{proof}

It is now straightforward to compute quasiconvex enclosures: one iteratively computes relative Grushko and JSJ decompositions, and Theorem \ref{t: Strong accessibility} ensures that the procedure terminates.  Therefore we have the following result.

\begin{proposition} \label{p:find enclosure}
There is an algorithm which takes as input a finite presentation $\langle \mc{A} \mid \mc{R} \rangle$ of a toral relatively hyperbolic group $\Upsilon$ and a finite subset $S$ of $\Upsilon$ (given as a set of words in $\mc{A}^{\pm}$) and outputs a finite presentation for the quasi-convex enclosure of $\langle S \rangle$ in $\Upsilon$.
\end{proposition}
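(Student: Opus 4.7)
The plan is to build, level by level, the strong accessibility hierarchy $\mathcal{T}_{\Upsilon,\mathcal{D}}$ of Subsection \ref{ss:SA}, descending along the unique branch that contains (a conjugate of) $\langle S\rangle$. By Theorem \ref{t: Strong accessibility} the hierarchy is finite, so the procedure terminates after boundedly many levels, and by Definition \ref{d:Enclose} the terminal subgroup $G_n$ produced in this way is precisely the quasi-convex enclosure $\overline{\langle S\rangle}$.

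At each stage we are handed a finite presentation of a toral relatively hyperbolic subgroup $\Upsilon'\leq\Upsilon$, a finite peripheral structure, and a generating set for the current copy of $\langle S\rangle$ inside $\Upsilon'$. Apply Theorem \ref{t:Find Splittings} to compute first the relative Grushko decomposition of $\Upsilon'$ and then, for each freely indecomposable factor, its relative primary JSJ decomposition. By Proposition \ref{p:Vertex groups RH}, each rigid vertex group $V$ is itself toral relatively hyperbolic with an explicit finite presentation furnished by Theorem \ref{t:Find Splittings}. We then test whether $\langle S\rangle$ is elliptic in the JSJ: for each rigid vertex group $V$, we ask whether there exists $\gamma\in\Upsilon'$ such that $\gamma s\gamma^{-1}\in V$ for every $s\in S$. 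If an affirmative answer is obtained, a brute-force search in $\Upsilon'$ produces a witness $\gamma$; we then replace $\Upsilon'$ by $V$, conjugate the generators of $\langle S\rangle$ by $\gamma$, augment the peripheral structure by adjoining the incident edge groups of $V$, and recurse. If no such $V$ exists then $\langle S\rangle$ is not elliptic in the current splitting, so we are at a leaf of the hierarchy and we output the current finite presentation of $\Upsilon'$.

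The main obstacle is encoding the ellipticity question as a system tractable by Proposition \ref{prop:Short solutions with conjugation}: membership of $\gamma s\gamma^{-1}$ in a finitely generated relatively quasi-convex subgroup $V\leq\Upsilon'$ must be rewritten as an existential condition over $\Upsilon'$. Since $V$ is given by an explicit finite presentation and is relatively quasi-convex (hence membership in $V$ is expressible by an existential statement using an auxiliary word variable in the generators of $V$), this encoding is possible, and combining it with the single auxiliary variable $\gamma$ yields a finite system of equations and inequations with coefficients in $\Upsilon'$ to which Proposition \ref{prop:Short solutions with conjugation} applies. The remaining bookkeeping---tracking conjugators across levels so that at each successive stage $\langle S\rangle$ sits literally inside the new ambient group, and re-expressing the augmented peripheral structure in the generators of $V$---is routine, and Theorem \ref{t: Strong accessibility} guarantees termination.
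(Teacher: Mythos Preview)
Your overall strategy coincides with the paper's: iteratively compute the relative Grushko and primary JSJ decompositions using Theorem \ref{t:Find Splittings}, descend into the rigid vertex containing $\langle S\rangle$, and invoke Theorem \ref{t: Strong accessibility} for termination.  The paper's own argument is in fact only the one sentence preceding the proposition, so your writeup is considerably more detailed than what appears there.

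There is, however, a genuine gap in your ellipticity test.  You claim that ``membership in $V$ is expressible by an existential statement using an auxiliary word variable in the generators of $V$'', and then feed this into Proposition \ref{prop:Short solutions with conjugation}.  But membership of $\gamma s\gamma^{-1}$ in a finitely generated subgroup $V\leq\Upsilon'$ is \emph{not} a finite system of equations over $\Upsilon'$: the condition $\gamma s\gamma^{-1}=w(v_1,\ldots,v_k)$ requires quantifying over all words $w$, which is an infinite disjunction, and Proposition \ref{prop:Short solutions with conjugation} as stated treats only finite systems $\Sigma,\Lambda,\Theta$ (with the single rational constraint $\Omega$).  Relative quasi-convexity of $V$ does not by itself convert subgroup membership into a sentence of the required form.

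The fix is to avoid deciding ellipticity vertex-by-vertex.  The definition of the quasi-convex enclosure (Definition \ref{d:Enclose}) already presupposes that $\langle S\rangle$ admits no primary splitting relative to $\mc{D}$, so at every non-leaf level $\langle S\rangle$ \emph{is} conjugate into exactly one rigid vertex.  Hence a naive parallel search---over all rigid vertices $V$, over all candidate conjugators $\gamma$, and over all words in the generators of $V$---is guaranteed to halt, and the solution to the word problem in $\Upsilon'$ certifies a hit when one is found.  Correspondingly, your termination criterion should be ``the relative JSJ produced by Theorem \ref{t:Find Splittings} is trivial'', which that theorem detects; the case ``$\langle S\rangle$ is not elliptic'' never actually occurs under the standing hypothesis and is not the definition of a leaf in $\mc{T}_{\Upsilon,\mc{D}}$.
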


\section{Enumerating $\Gamma$--limit groups} \label{s:Enumeration}

We now turn to the enumeration of $\Gamma$--limit groups. First, we need to describe the data that we will use to specify a $\Gamma$--limit group $L=L_0$. Our enumeration will be non-unique, and indeed our limit group $L_0$ will come equipped with a strict resolution $\res_0$.  The philosophy is to specify $L_0$ by specifying its model $M_0:=\model_{\res_0}$, together with certain nicely specified subsets of $M_0$ that generate $L_0$.  The data will also include the models of all the limit groups in the resolution $\res_0$, forming a strict resolution of $M_0$.

\begin{definition}
A \emph{model pair of (strict) resolutions} is a diagram of maps of strict resolutions of the form\\
\centerline{
\xymatrix{
   L_0  \ar@{>}[d]^{\eta_0} \ar@{>}[r]^{\lambda_0} & L_1 \ar@{>}[d]^{\eta_1} \ar@{>}[r]^{\lambda_1} & \cdots \ar@{>}[r]^{\lambda_{n-1}} & L_n \ar@{>}[d]^{\eta_n} \ar@{>}[r]^{\lambda_n} & \Gamma \ar@{>}[d]^{\mathrm{id}}  \\
 M_0 \ar@{>}[r]^{\mu_0}& M_1\ar@{>}[r]^{\mu_1} &  \cdots\ar@{>}[r]^{\mu_{n-1}}& M_n\ar@{>}[r]^{\mu_n}& \Gamma}}\\
where the first line is a strict resolution $\res_0$ of $L_0$ followed by a strict map $\lambda_n \co L_n \to \Gamma$, and the second line consists of the canonically associated resolution of models (along with a strict map $\mu_n \co M_n \to \Gamma$).  The {\em length} of this model pair is $n$.

In such a model pair of strict resolutions, the pair of $\Gamma$--limit groups $\eta_0\co L_0\to M_0$ is said to be a \emph{model pair of $\Gamma$--limit groups built over $\res_1$}, where $\res_1$ is the resolution of $L_1$ induced by truncating {$\res_0$}.
\end{definition}

By abuse of indices, we refer to the identity map $\Gamma \to \Gamma$ as a model pair of (strict) resolutions of length $-1$.  Note that by Proposition \ref{p:res of length 0}, when $\Gamma$ is considered with respect to the trivial resolution, the model of $\Gamma$ is $\Gamma$.

\begin{definition} \label{d:eff pair of res}
An \emph{effective pair of resolutions of length $n$} (specifying a resolution $\res_0$) consists of a commutative diagram of the form
\centerline{
\xymatrix{
   L_0  \ar@{>}[d]^{\eta_0} \ar@{>}[r]^{\lambda_0} & L_1 \ar@{>}[d]^{\eta_1} \ar@{>}[r]^{\lambda_1} & \cdots \ar@{>}[r]^{\lambda_{n-1}} & L_n \ar@{>}[d]^{\eta_n} \ar@{>}[r]^{\lambda_n} & \Gamma \ar@{>}[d]^{\mathrm{id}}  \\
 M_0 \ar@{>}[r]^{\mu_0}& M_1\ar@{>}[r]^{\mu_1} &  \cdots\ar@{>}[r]^{\mu_{n-1}}& M_n\ar@{>}[r]^{\mu_n}& \Gamma}}\\
where each $M_i$ is specified by a finite presentation $\langle\mc{A}_i\mid\mc{R}_i\rangle$, each $L_i$ is specified by a finite set $\mc{S}_i$, each map $\eta_i$ is specified by a map $\mc{S}_i\to  F(\mc{A}_i)$, each map $\mu_i\co M_i\to M_{i+1}$ is specified by a map $\mc{A}_i\to F(\mc{A}_{i+1})$, and each map $\lambda_i$ is specified by a map $\mc{S}_i\to F(\mc{S}_{i+1})$. 

An \emph{effective model pair of resolutions} is an effective pair of resolutions that defines a model pair.
\end{definition}

We remark that, on the face of it, there is no effective procedure for checking whether or not the maps $\lambda_i$ and $\mu_i$ that we specify are strict, or that $M_i$ is the model of $L_i$ (with respect to $\res_i$). However, we will describe a Turing machine that enumerates effective model pairs for which these properties do hold.

\begin{theorem}\label{thm: Enumerate Gamma-limit groups}
There is a Turing machine that takes as input a presentation for a torsion-free hyperbolic group $\Gamma$ and outputs a list of effective pairs of resolutions over $\Gamma$ such that:
\begin{itemize}
\item every effective resolution on the list defines a model pair of strict resolutions;
\item for every strict resolution $\res$ over $\Gamma$, every effective pair of resolutions that specifies the model pair canonically associated to $\res$ appears on the list.
\end{itemize}
\end{theorem}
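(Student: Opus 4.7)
The plan is to enumerate all candidate effective pairs of resolutions by brute force and use a one-sided decision procedure to recognise those that genuinely specify a canonically associated model pair. We proceed by induction on the length $n$; the base case $n=-1$ corresponds to the identity on $\Gamma$ and is output immediately. For the inductive step, we iterate over all finite tuples of candidate data $(\langle\mc{A}_0\mid\mc{R}_0\rangle,\mc{S}_0,\eta_0,\lambda_0,\mu_0)$ extending an already-enumerated effective model pair of length $n-1$, and we run in parallel a verification procedure for each candidate, outputting it precisely when verification succeeds.

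The verification first invokes the semi-decidable algorithm of \cite{Dah-FindRH} to confirm that $M_0$ is toral relatively hyperbolic; once this succeeds we gain access to the word problem in $M_0$, the existential theory (via \cite{dahmani09}), the computation of centralizers (Theorem \ref{t:Compute Z(g)}), the detection of relatively immutable subgroups (Proposition \ref{prop: Detect rel immutable}), and the computation of relative Grushko and JSJ decompositions and quasi-convex enclosures (Theorem \ref{t:Find Splittings}, Proposition \ref{p:find enclosure}). Using the word problem, we first check that the assignments $\eta_0,\lambda_0,\mu_0$ define genuine homomorphisms making the full square commute. We then compute the canonical primary JSJ decomposition $\Delta$ of each freely indecomposable factor of $L_0$ (viewed via $\eta_0$ as a subgroup of $M_0$, hence with decidable word problem) and use it to verify strictness of $\lambda_0$ according to Definition \ref{d:strict}: injectivity on envelopes of rigid vertex groups and on peripheral subgroups of abelian vertex groups is decidable via systems of equations, and non-abelianness of socket images is decidable via the word problem.

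To confirm that $M_0$ is the model of $L_0$ with respect to $\res_0$, we rebuild it explicitly using the construction of Section \ref{s:Construction}. From $\Delta$ and the strict map $\eta_1\circ\lambda_0\co L_0\to M_1$ we compute the expansion $\wh{\Delta}$: quasi-convex enclosures for rigid vertex groups via Proposition \ref{p:find enclosure}, extended sockets by adjoining roots of boundary elements detected inside $M_1$, and abelian vertex groups as abelianisations of the finite graphs of abelian groups described in Section \ref{s:Construction}. This yields a finite presentation of a concrete incarnation of the canonical model. We then verify that this concrete model matches the candidate $M_0$ by constructing the natural homomorphism between them determined by the candidate data and checking that it is a two-sided inverse on generators; once this is confirmed, injectivity of $\eta_0$ is automatic from Proposition \ref{p:Map is injection}, strictness of $\mu_0$ follows from Proposition \ref{p: Mu is strict}, and the remaining properties follow from Theorems \ref{t:Model props} and \ref{t:Properties of models}.

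Completeness holds because the enumeration eventually considers every finite tuple of data, and for any genuine canonically associated model pair every valid effective description of it will pass all of the verification steps above. The main obstacle is identifying a candidate $M_0$ as the model prescribed by the universal property in Section \ref{s:Defn models}, rather than some larger or smaller toral relatively hyperbolic group happening to contain $L_0$; this is resolved by the concrete graph-of-groups description $\pi_1\wh{\Delta}$ from Section \ref{s:Gluing}, which translates the universal property into a finite sequence of algorithmic computations inside $M_1$ where the required primitives are all available.
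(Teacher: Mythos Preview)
Your verification strategy has a genuine gap at the step where you ``compute the canonical primary JSJ decomposition $\Delta$ of each freely indecomposable factor of $L_0$''. Theorem \ref{t:Find Splittings} takes as input a \emph{finite presentation of a toral relatively hyperbolic group}, not merely a finite generating set for a subgroup with decidable word problem. At the moment you invoke it you only know $L_0$ as the subgroup $\langle \eta_0(\mc{S}_0)\rangle$ of $M_0$; you do not have a finite presentation for $L_0$, and indeed $L_0$ need not be finitely presentable (this is the entire motivation for introducing models). Likewise, you cannot compute the Grushko decomposition of $L_0$, nor identify generators for envelopes of rigid vertex groups, nor then verify strictness of $\lambda_0$. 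The paper explicitly flags this obstruction in the final paragraph: ``we do not know how to algorithmically compute the JSJ decomposition of a given finitely generated subgroup of $\Gamma$''.

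The paper circumvents this by reversing the direction of the construction. Rather than guessing $(L_0,M_0,\eta_0,\lambda_0,\mu_0)$ and then verifying, it enumerates the \emph{combinatorial input} to the expansion construction of Section \ref{s:Gluing}: a finite bipartite graph $\Delta_0$ with vertices labelled rigid/socket/abelian, together with finite generating sets $W_u,W_w,Q_e$ inside $L_1\subseteq M_1$ satisfying the appropriate conditions (commuting, relatively immutable, etc.). The only semi-decision needed is Proposition \ref{prop: Detect rel immutable} for the rigid vertices. From such data one then \emph{builds} $\wh\Delta$ algorithmically via Proposition \ref{p:find enclosure} and Theorem \ref{t:Compute Z(g)}, and sets $M_0=\pi_1\wh\Delta$ and $L_0=\langle S\rangle$ for an explicit $S$. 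Proposition \ref{p:Is a model} then guarantees \emph{a priori} that $M_0$ is the model of $L_0$, so no after-the-fact verification of the model property, of strictness, or of the JSJ of $L_0$ is required. The crucial difference is that the paper never needs to compute the JSJ of a group that is not known to be finitely presented.
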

\begin{proof}
We begin our enumeration with the unique effective model pair of resolutions of length $-1$, which is specified by the given presentation for $\Gamma$ (considered as the model), the given generating set for $\Gamma$ (considered as the $\Gamma$--limit group) and the identity map on this generating set (specifying the map from the $\Gamma$--limit group to its model).

We now provide a parallel enumeration of all effective model pairs of resolutions.  Suppose that we have an effective pair of resolutions of length $n-1$ (for some $n \ge 0$) given by\\
\centerline{
\xymatrix{
L_1 \ar@{>}[d]^{\eta_1} \ar@{>}[r]^{\lambda_1} & \cdots \ar@{>}[r]^{\lambda_{n-1}} & L_n \ar@{>}[d]^{\eta_n} \ar@{>}[r]^{\lambda_n} & \Gamma \ar@{>}[d]^{\mathrm{id}}  \\
M_1\ar@{>}[r]^{\mu_1} &  \cdots\ar@{>}[r]^{\mu_{n-1}}& M_n\ar@{>}[r]^{\mu_n}& \Gamma}}\\
along with data $\mc{A}_i$, $\mc{R}_i$ and maps as in Definition \ref{d:eff pair of res}. (We start the indices with $1$ so that we can append a model pair $\eta_0 \co L_0 \to M_0$ built over the model pair of resolutions.)

We now begin the enumeration of the model pairs $\eta_0 \co L_0 \to M_0$ built over the above effective pair.
We proceed to enumerate in parallel all of the possible collections of data as follows:

\begin{enumerate}
\item A finite bipartite graph $\Delta_0$ with {\em red} and {\em blue} vertices, and a choice of maximal tree in $\Delta_0$.
\item  A decomposition of the blue vertices of $\Delta_0$ into two disjoint families: the `socket' vertices, and the `rigid' vertices.  The red vertices of $\Delta_0$ are all `abelian' vertices.
\item For each edge $e$ of $\Delta_0$, a finite set of words $Q_e$ in $S_1^\pm$ which define nontrivial commuting elements of $M_1$.  If an edge $e$ is adjacent to a socket vertex of $\Delta_0$, the set $Q_e$ consists of a single element $q_e$.
\item For each rigid vertex $u$, with adjacent edges $e_1, \ldots , e_k$, a finite set of words $W_u$ in $S_1^\pm$ so that 
\begin{enumerate}
\item $Q_{e_1}, \ldots , Q_{e_k}$ are all in the subgroup $\langle W_u \rangle$; and
\item $\left( \langle W_u \rangle , \left\{ \langle Q_{e_i} \rangle \right\} \right)$ is relatively immutable in $M_1$.
\end{enumerate}
\item For a socket vertex $v$, a presentation of the socket of the fundamental group $\Sigma_v$ of a surface with boundary, with boundary components in bijection with certain powers of generators $q_e$ of edge groups for edges adjacent to $v$.  There is also a map from $\Sigma_v$ to $M_1$ with image lying in $\langle S_1 \rangle$ so that the generator of a boundary component maps to a power of the corresponding $q_e$ (under the bijection between boundary components and edges adjacent to $v$) and the image of $\Sigma_v$ is nonabelian.
\item For each edge $e$ of $\Delta_0$ we choose elements $\tau_e\in M_1$, and insist that $\tau_e=1$ if $e$ is in the maximal tree of $\Delta_0$.
\item For each abelian vertex group $w$, a finite set of words $W_w$ in $S_1^\pm$ which define nontrivial commuting elements of $M_1$ so that $\langle W_w \rangle$ contains each of the $\tau_eQ_e\tau_e^{-1 }$ for edges $e$ adjacent to $w$.
\end{enumerate}
Note that each such piece of data can be enumerated in a simple-minded manner, with the verification that it satisfies the appropriate property also being verified by the appropriate simple-minded enumeration, or an application of Proposition \ref{prop: Detect rel immutable} in the case of relatively immutable subgroups.

We will define a model $\Gamma$--limit group $M$ as the fundamental group of a graph of groups $\wh\Delta$ which will be constructed from the above data as in Section \ref{s:Gluing}.  The subgroup $L$ of which $M$ is the model will also be defined as the fundamental group of a graph of groups $\Delta$.  We will construct $\wh\Delta$ algorithmically.  However, $\Delta$ need not be equal to $\Delta_0$, and we do not know how to effectively construct $\Delta$.  On the other hand, we will obtain an explicit finite list of generators for $L$, as required.

We now define the graph of groups $\wh\Delta$ as follows.  Suppose that $u$ is a rigid vertex of $\Delta_0$, and consider the subgroup $\langle W_u \rangle$ of $M_1$.  Let $\wh\Delta_u$ be the quasiconvex enclosure of $\langle W_u \rangle$ in $M_1$.  A finite presentation for $\wh\Delta_u$ can be calculated by Proposition \ref{p:find enclosure}.  If $e$ is an edge in $\Delta$ adjacent to $u$, we define $\wh\Delta_e$ to be the centralizer in $\wh\Delta_u$ of $\langle Q_e \rangle$, which can be calculated by Theorem \ref{t:Compute Z(g)} (note that since $\wh\Delta_u$ is toral relatively hyperbolic, it is CSA, so the centralizer of $\langle Q_e \rangle$ is the centralizer of any nontrivial element of $Q_e$).

We define an equivalence relation on the edges of $\Delta_0$ by saying that two edges $e_1$ and $e_2$, adjacent to a rigid vertex $u$ are equivalent if $\wh\Delta_{e_1}$ and $\wh\Delta_{e_2}$ are conjugate in $\wh\Delta_u$.  Since these subgroups are nontrivial abelian subgroups, and $\wh\Delta_u$ is toral relatively hyperbolic, we can determine this equivalence using CSA and the algorithm to solve equations over $\wh\Delta_u$ (note that both $\wh\Delta_{e_1}$ and $\wh\Delta_{e_2}$ are maximal abelian subgroups of $\wh\Delta_u$, by construction).  

As in Section \ref{s:Gluing}, the graph of $\wh\Delta$ is the quotient graph of $\Delta_0$ obtained by identifying equivalent edges.  Each vertex in $\wh\Delta$ is naturally exactly one of rigid, socket or abelian.
The definition of the graph of groups $\wh\Delta$ is now exactly as in Section 4.  We have already defined the rigid vertex groups.  The socket vertex groups in $\wh{\Delta}$ are the sockets obtained from the socket vertices of $\Delta_0$ by attaching maximal roots in $M$ to generators of  boundary components.  The edge groups are centralizers in the same way as in Section \ref{s:Gluing}, and the abelian vertex groups are abelianizations of the fundamental group of a graph of groups exactly as in Section \ref{s:Gluing}.  Then we let $M = \pi_1 \wh\Delta$.

We next specify the finite set $S$ and the map $S\to M$ that defines $L$.  The data listed above naturally defines a graph of groups structure on $\Delta_0$. Note that we do \emph{not} have access to presentations for the vertex and edge groups of $\Delta_0$ (indeed, the vertex groups may not be finitely presentable). However, we may take $S$ to be a natural generating set for $\pi_1\Delta_0$ corresponding to the disjoint union of:
\begin{itemize}
\item the subsets $Q_e$ (as $e$ ranges over the edges of $\Delta_0$);
\item the $W_ u$ (as $u$ ranges over all rigid vertices);
\item the generating sets specified by the chosen presentation for $\Sigma_v$ (as $v$ ranges over all socket vertices);
\item the $W_w$  (as $w$ ranges over all abelian vertices);
\item the choices of stable letters $\tau_e$ for edges $e$ of $\Delta_0$ not in the chosen maximal tree.
\end{itemize}
The map $S\to M$ is now given by the construction of $M$. 

It follows immediately from Proposition \ref{p:Is a model} that $M$ is the model of $L$ (with respect to the resolution $L\to L_1\to \ldots L_n\to\Gamma$).

Enumerating these data enumerates all \emph{freely indecomposable} effective model pairs built over the above effective pair of resolutions.  To enumerate \emph{all} effective model pairs $L_0\to M_0$, we enumerate finite sets of such data, together with finitely generated free groups, and take their free products.  

Proceeding in this way, recursively and in parallel, over all effective pairs of resolutions that we build, we obtain a list of effective pairs of resolutions over $\Gamma$. It is clear that every model pair arises in this list, and we have shown that every pair of resolutions that we build is a model pair.
\end{proof}
 
We end by remarking that we do not know how to algorithmically decide if a given (strict) map $\lambda_n \co L_n \to \Gamma$ is an embedding of the freely indecomposable free factors of $L_n$, nor do we know how to algorithmically compute the JSJ decomposition of a given finitely generated subgroup of $\Gamma$.  Therefore, terminating with a strict map $\lambda_n \co L_n \to \Gamma$ is the best that we can do.  This introduces some extra redundancy in the list of resolutions, as in Remark \ref{rem:different resolution defn}, but we do not believe it affects the utility of the construction.

\end{document}